\documentclass[12pt,twoside]{article}
\usepackage{latexsym}
\usepackage{amssymb,amsbsy,amsmath,amsfonts,amssymb,amscd,mathtools,dsfont}
\usepackage{graphicx}
\usepackage{hyperref}
\usepackage{pstricks}
\newcommand{\commentout}[1]{}
\newcommand{\R}{\mathbb{R}}

\newcommand {\e}  {\varepsilon}

\newcommand {\Chi} {{\bf \raise 2pt \hbox{$\chi$}} }

\newcommand {\f}   {\frac}
\newcommand {\p}   {\partial}
\newcommand{\fer}{\eqref}
\newcommand{\beq}{\begin{equation}}
\newcommand{\eeq}{\end{equation}}
\newcommand{\bea} {\begin{array}{rl}}
\newcommand{\eea} {\end{array}}
\newcommand{\bepa}{\left\{ \begin{array}{l}}
\newcommand{\eepa} {\end{array}\right.}
\newtheorem{theorem}{Theorem}[section]
\newtheorem{lemma}[theorem]{Lemma}
\newtheorem{definition}[theorem]{Definition}
\newtheorem{remark}[theorem]{Remark}
\newtheorem{proposition}[theorem]{Proposition}

\newcommand{\qed}{{ \hfill
                       {\unskip\kern 6pt\penalty 500 \raise -2pt\hbox{\vrule\vbox to 6pt{\hrule width 6pt
                       \vfill\hrule}\vrule} \par}   }}
\title{Analysis of  the steady solutions of the Fisher's infinitesimal model; a Hilbertian approach
 }
\author{M. Hillairet\thanks{
Institut Montpelli\'erain Alexander Grothendieck, ANGUS, Univ Montpellier, Inria, CNRS, Montpellier, France; E-mail: \texttt{matthieu.hillairet@umontpellier.fr}} \and S. Mirrahimi\thanks{Sepideh Mirrahimi. Univ Toulouse, INSA Toulouse, CNRS, IMT, Toulouse, France.
		E-mail:  \texttt{sepideh.mirrahimi@math.univ-toulouse.fr}  }}

\date{\today}

\begin{document}
\maketitle
\pagestyle{plain}
\pagenumbering{arabic}

\abstract{We provide an asymptotic analysis of a nonlinear integro-differential equation which describes the evolutionary dynamics of a population which reproduces sexually and which is subject to selection and competition. The sexual reproduction is modeled via a nonlinear integral term, known as the Fisher's ’infinitesimal model’.  We consider a small segregational variance regime, where a parameter in the infinitesimal model, which measures the deviation between the trait of the offspring and the mean parental trait, is small with respect to the selection variance. In this regime, we characterize the steady states of the problem and  analyze their stability. Our method relies on a spectral analysis involving Hermite polynomials, highlighting the specific structure of the nonlinear reproduction term. We expect that the framework developed in this article will contribute to progress on several related problems that were out of reach with previous methods. }
\medskip

\noindent {\em Keywords :} Integro-differential equations, singular limits, steady solutions,  quantitative genetics,  infinitesimal model.\\[2pt]
\noindent {\em 2020 M.S.C. :} 35B40, 35Q92, 92D15, 47G20.

\medskip


\section{Introduction}

\subsection{Model and question}

The purpose of this article is to study the steady solutions, and   their stability, of the following equation
\begin{equation} \label{eq_main}
\begin{cases}
  \partial_t   n  (t,x)= r B_{\alpha}   [ n ]  (t,x)
-    m (x) n (t,x) -\kappa \rho(t) n(t,x),\\
n (0,x)=n_{0}(x),\qquad \rho(t)=\int_\R n(t,x),
\end{cases}
\end{equation}
where $(t,x) \in (0,\infty) \times \mathbb R$ and
\[
B_{\alpha} [{n (t,\cdot)}](x) =\int_{\mathbb R} \int_{\mathbb R} \Gamma_{\alpha}  \left(x-\frac{(y+y')}{2}\right)  n (t,y) \f{n (t,y')}{\rho(t)}  dydy',
\]
\[
\Gamma_{\alpha} (x )= \f{1}{ \sqrt{\pi}\alpha} \exp\left(-\f{x^2}{\alpha^2} \right).
\]
This equation describes the evolutionary dynamics of the phenotypic density of a population subject to sexual reproduction, selection and competition. The unknown  $n(t,x)$ stands for the  density of  individuals of phenotypic trait $x$ at time   $t$. The function $m(x)$ represents the intrinsic mortality rate of individuals of trait $x$ and the nonlocal term $\kappa \rho$ corresponds to a mortality rate due to uniform  competition between individuals. The parameter $r$ scales   reproduction in the population and the   operator $B_{\alpha}$  models the sexual reproduction, assuming that the trait of the offsprings are distributed following a Gaussian profile with variance $\alpha >0$ centered around the mean parental traits. This reproduction model, which is known as the infinitesimal model, was introduced by Fisher in \cite{RF:19}  and it is widely used in 
the biological literature \cite{MB:80,Lynch-Walsh,MT:17}. Such a model is valid under the assumption that the trait $x$ is coded by infinitely many alleles with small additive effects (see \cite{NB.AE.AV:17} for a recent justification of such a model).

 Here, we are interested in a particular regime where the phenotypic variance induced by each reproductive event is small.  We characterize the steady solutions of \fer{eq_main} and analyze their stability. Our analysis combines the computation of the moments of the phenotypic distribution, following our previous work in \cite{JG.MH.SM:23}, with a spectral analysis using Hermite polynomials. With these new tools, we simplify considerably the previous approaches in \cite{VC.JG.FP:19,FP:23} and we extend their results on the asymptotic analysis of the infinitesimal model. We believe that our approach will facilitate the analysis of several open problems, which seemed out of reach with previous methods, as for instance the asymptotic analysis of models involving the infinitesimal model but accounting for  spatial or temporal heterogeneity of the environment, or the development of asymptotic preserving numerical schemes. 

\subsection{Biological motivation}

   The infinitesimal model has been widely used in evolutionary biology and in plant and animal breeding since the pioneer work of Fisher \cite{RF:19}. This work reconciled  Galton's observations \cite{FG:89} on the distribution and the inheritance of continuously varying phenotypes, as human's height, with Mendelian genetics.   This model is interested in  the traits that are coded by a large number of  genes with additive affects \cite{RF:19,MB:80,Lynch-Walsh}. A central limit theorem type result then implies that the trait of offsprings has a normal distribution centered around the mean parental traits. This property which can be referred to as the "Gaussian descendants" approximation \cite{MT:17}, was proved rigorously in \cite{NB.AE.AV:17}.  Many works in theoretical biology make however a stronger assumption that not only the offspring's distribution has a Gaussian profile, but also the population distribution is Gaussian \cite{MK.NB:97,MK.SM:14,RL.SS:96,OR.MK:01}. This can be referred to as the "Gaussian population" approximation \cite{MT:17}. This approximation also seems to provide robust results,  however its framework of validity is not yet completely understood \cite{MT:17}. Our work falls in line with studies that aim both to understand the validity framework of  such Gaussian approximations and to improve the accuracy of approximations in theoretical biology (see also \cite{VC.JG.FP:19,FP:23,GR:17} for other works in this direction). This article follows our earlier work in \cite{JG.MH.SM:23} where we provided an analysis of the time dependent problem \fer{eq_main}. The model considered in these articles considers a homogeneous environment with no space or time variation of the environment. We expect however that our methods would facilitate the analysis of more complex models with space or time heterogeneity.

\subsection{Expected shape of solutions} 
An underlying structure of \fer{eq_main} enables us to simplify the computation of solutions. Indeed, 
any solution $n(t,x)$ can be split into a mass $\rho(t) \in (0,\infty)$ and a probability density $x \mapsto q(t,x) \in (0,\infty)$:
\[
n(t,x)=\rho(t) q(t,x).
\]
Standard manipulations show that $n$ is a solution to \fer{eq_main} if and only if $(q,\rho)$
solve:
{
\begin{align}
\label{eq:qe}
&\begin{cases}
 \p_t q(t,x)=r\big( T_{\alpha}[q](t,x)-q(t,x)\big) -\big(m(  x)-\int_\R m(  y)q(t,y)dy\big) q(t,x),\\
q(0,x)=q_{0}(x):=n_{0}(x)/\rho(0),
\end{cases}
\\
\label{eq:rhoe}
&\begin{cases}
 \p_t \rho(t) = \rho(t) \left( r - \int_{\mathbb R} m(x) q(t,x) dx\right) - \kappa \rho(t)^2,\\
\rho(0) = \int_\R n(0,y)dy,
\end{cases}
\end{align}
}
with
\[
T_{\alpha} [{\overline q( \cdot)}](x) =\int_{\mathbb R} \int_{\mathbb R} \Gamma_{\alpha}  \left(x-\frac{(y+y')}{2}\right)\overline q (y)  \overline q (y')   dydy'.
\]
We point out that {system \fer{eq:qe}-\fer{eq:rhoe}} is "triangular" in the sense that the first equation involves the density unknown $q$ only and can thus be solved independently. The mass $\rho$ is then recovered by integrating the second equation. Elementary properties of the kernel $\Gamma_{\alpha}$ ensure that the property:
\[
\int_{\mathbb R} q(t,y) dy = 1
\]
is propagated when solving \eqref{eq:qe}.

\medskip

In this paper, we tackle existence/uniqueness properties of steady solutions $\overline{n}$ to \eqref{eq_main} that is, with the same factorization as above:
\beq
\label{eq:n-from-q}
\begin{cases}
\overline n(x)=\overline \rho \, \overline q(x),\qquad x\in \R,\\
\overline \rho=(r-\int_\R m(y)\overline q(y)dy)/\kappa,
\end{cases}
\eeq
where $\overline q$ solves the following equation 
\begin{equation} \label{eq_qepsform}
\begin{cases}
0=r\big( T_{\alpha}[\overline q](x)-\overline q(x)\big) -\big(m(  x)-\int_\R m(  y)\overline q(y)dy\big) \overline q(x),\\
 \int_\R \overline q(y)dy=1.
\end{cases}
\end{equation}
We also investigate the stability properties of these steady-states within the dynamical system \eqref{eq_main}. Obviously, the main issue here is to compute and analyze the probability-density part $\bar{q}$ in the factorization of $\overline{n}$.
 
\medskip
 
From the analytical standpoint, our problem is highly nonlinear and requires a subtle understanding of the interactions between the two operators at stake in the right-hand side of \eqref{eq_qepsform} : the (normalized) sexual reproduction (or recombination) operator $T_{\alpha}[q] -q$ and the (normalized) mortality operator  $(m - \int_{\mathbb R} mq)q$. Indeed, on the one-hand, it is by now well-documented that probability distributions cancelling the normalized recombination operator are gaussian distribution with variance $2\alpha$ (and arbitrary center) {(see e.g. \cite{GR:17})}. {
Moreover, considering the problem
\[
\begin{cases}
\p_t q(t,x)=T_\alpha[q]-q,\\
q(0,x)=q_0(x),\qquad \overline x_0=\int_\R xq_0(x)dx,
\end{cases}
\]
one can prove \cite{PM.GR:25}, using a contraction property of the Wasserstein distance, that, as $t\to+\infty$, $W_2(q(t,x),\Gamma_{\sqrt{2}\alpha}(x-\overline x_0))\to 0$.\\
 Let's consider now the evolution problem with the morality operator
\[
\begin{cases}
\partial_t q(t,x) =  -\left(m(x)-\int_\R m(  y) q(t,y)dy\right) q(x),\\
q(0,x)=q_0(x),
\end{cases}
\]
assuming that
\[
\arg\min m=\{x_1,\cdots, x_N\}, \qquad N\geq 1,\qquad \arg \min m\cap \mathrm{supp} \,q_0\neq \emptyset.
\]
Then, one can   prove that \cite{TL.CP:20}, as $t\to 0$, $q(t,x)\to \sum_{i=1}^n \alpha_i \delta(x-x_i)$, with $\alpha_i\geq 0$ and $\sum_{i=1}^N \alpha_i=1$, and    $\alpha_i=0$ for all $x_i\not\in \mathrm{supp} \,q_0$. \\
To summarize, the reproduction operator will tend to make the solution gaussian around one center and the mortality operator will tend to make the solution to concentrate around its minimum points. What would be the dynamics of the solution when we combine both operators? Let's assume that the segregational variance $\alpha$ is small and that $q_0$ is initially concentrated around a trait $x_0$ which is in the convexity zone around a local minimum point $x_m$ of $m$.  It was proved in \cite{FP:23,JG.MH.SM:23} that, when $x_m$ satisfies an admissibility condition ensuring that $m(x_m)$ is not too far from the global minimum value of $m$, the solution $q(t,x)$ remains concentrated for all times, with an approximately Gaussian distribution   centered around an evolving point $\bar x(t)$. Moreover, the point $\bar x(t)$   solves an ordinary differential equation indicating that it moves towards $x_m$ as $t$ grows. This indicates that, for any admissible local minimum point $x_m$, there should exist a steady solution, with an approximately Gaussian shape, concentrated around $x_m$. We prove indeed this property in this article and show that such steady solutions are stable.\\ Let us now consider the following gedankenexperiment. Let us imagine that $m$ is even and has a local maximum in $0$ and two (local) minimas $-x_1$ and $x_1$. For a symmetric initial data, the evolution problem \eqref{eq:qe} will yield a symmetric solution. If we expect this solution to tend to a steady state for large times, we face a novel difficulty. Indeed, the recombination operator will tend to make the solution gaussian around one center (and thus in $0$), while the mortality operator will want to make the solution localized around the two local minima of $m$ (and thus outside $0$). It is then interesting to discuss also what happens around local maximums of $m.$ We prove indeed that when the local maximum points of $m$ satisfy an admissibility condition, there also exists a steady solution concentrated around such maximum points. We don't expect however that these steady states would be stable (they can be reached only for some particular initial data).
}
 
\subsection{A dimensionless parametrization of the model}

In what remains of this paper, we fix $x_0$ a local extremum for $m.$ As we may expect from the previous discussion, the respective flatness of $m$ around $x_0$  (encoded by the value of $|m''(x_0)|$) and variance $\alpha$ of the Gaussian kernel involved in the recombination operator will have a decisive impact on the properties of steady-states. To fix ideas, we propose to non-dimensionalize the equations and fix our framework with one well-chosen parameter $\varepsilon$ encoding the respective amplitude of both quantities. Let $q$ be a solution to \eqref{eq:qe} and set
\[
\tilde{q}(\tau,y) ={ \sqrt{\dfrac{r}{|m''(x_0)|}}}q\left(\dfrac{\tau}{r},\sqrt{\dfrac{r}{|m''(x_0)|}}y+x_0\right).
\]
We obtain that $\tilde{q}$ solves
\[
 \p_{\tau} \tilde{q}(\tau,y)= \left( T_{\e}[\tilde{q}](\tau,y)-\tilde{q}(\tau,y)\right) -\left(\tilde{m}( y)-\int_\R \tilde{m}(z)\tilde{q}(\tau,z)dz\right) \tilde{q}(\tau,y),\\
\]
where:
\[
\e = \alpha \sqrt{\dfrac{|m''(x_0)|}{r}} \qquad 
\tilde{m}(y) = \dfrac{1}{r} m \left( \sqrt{\dfrac{r}{|m''(x_0)|}} y + x_0\right) - {\frac{1}{r}}m(x_0).
\]
With this change of unknown, we have shifted the local extremum of $\tilde{m}$ in $x=0$ and made it $\tilde{m}(0)= 0.$ Consequently, $\tilde m$ is not necessarily nonnegative. We have also normalized the local expansion around $0$ since
\[
\tilde{m}(y) = {\pm}\dfrac{y^2}{2} + lot.
\]
Below, we will consider the natural framework in which $\varepsilon << 1$ that is
\beq
\alpha |m''(0)|^2 << r,
\eeq
so that $m$ is nearly constant on the support of steady solutions for the recombination operator.  This amounts to study system  \eqref{eq:qe} or its stationary version \eqref{eq_qepsform} with $r=1,$ $\alpha = \varepsilon,$
and $|m''(0)|=1.$ We will thus analyze this case in the paper, {replacing $\tilde m$ by $m$ in what follows}. Other important assumptions for the analysis will be made precise and discussed below.

{ To {end} this section, we recall that we focus in what follows on the denstiy equation \eqref{eq:qe} (resp. \eqref{eq_qepsform}). But the full problem aslo involves the mass equation \eqref{eq:rhoe} (resp. \eqref{eq:n-from-q}). A possible corresponding non-dimensionalization for $\rho$ could be to write :
\[
\tilde{\rho}(\tau) = \dfrac{\kappa}{r} \rho \left(\dfrac{\tau}{r}\right).
\]
With this particular choice we transform equation \eqref{eq:rhoe} (resp. \eqref{eq:n-from-q}) into a similar equation with a new parameter  
\[
\tilde{r} =  1 - \dfrac{m(x_0)}{r} 
\]
replacing $r,$ the chosen $\tilde{m}$ and $\kappa$ replaced with $1.$
}

 \subsection{Assumptions and notations}
 
We will consider two sets of assumptions. The first set of assumptions is weaker and is designed to study solutions which are close to the concentrated steady solutions and allow several local extrema of $m$. 
It reads 
\beq
\label{as:m-extremum}\tag{H1}
m'(0)=0,\qquad m(0)=0<\min_\R m(x)+1,\qquad m''(0)\in \{\pm 1\},
\eeq
\beq
\label{as:D3m-polynom}
\tag{H2}
 |m^{'''}(x)| \leq A_m  (1+|x|^{p}), \quad \text{$\forall \,x\in \R,$ for some $p \in \mathbb N^*$ and $A_m \in (0,\infty).$}
\eeq
{The second condition in \fer{as:m-extremum} is an admissibility condition on the extremum point which requires that the value of $m$ at the extremum point $x_m$ is not too far from its global minimum value. We prove in Appendix \ref{sec:nonexistence} that if this condition is not satisfied, then there is no   concentrated steady solution around such a point.}
{ We will use several times that \eqref{as:D3m-polynom} with the condition on $m^{''}(0)$ appearing in \eqref{as:m-extremum} entail that 
\beq
\label{as:D2m-polynom}
\tag{H2'}
 |m^{''}(x)| \leq 2 {A}_m  (1+|x|^{p+1}), \quad \text{$\forall \,x\in \R.$}
\eeq
}

The second set of assumptions enforces that we have a unique admissible minimizer of $m$ and is designed to justify that any steady solution to \fer{eq_qepsform} is concentrated. Under these assumptions we will be able to achieve the uniqueness result in a wider class of solutions.  It reads
\beq
\label{As:m}\tag{H3}
 \text{$m(x)\geq m(0)=0$,  and the value of any extremum point of $m$ is greater than $1  $}.
\eeq
Notice that the assumption \fer{As:m} implies that for any $v\in (0, 1]$, there exists $a_+> 0$ (possibly equal to $+\infty$) and 
$a_-< 0$ (possibly equal to $-\infty$) such that  
 \beq
 \label{monotony-m}
 \tag{H3'}
 m(x)> v,\quad \text{for all $x\in (-\infty,a_-)\cup(a_+,+\infty) $ and }\quad  m (z)\leq v ,\quad \text{for all $z\in [a_-,a_+)$.}
\eeq
We  define then
$$
x_-=\inf \{x<0 \, | \,\forall y\in (x,0),\; m(y) \leq  1\},
$$
$$
x_+=\sup\{x>0 \, | \,\forall y\in (0,x),\; m(y) \leq  1\},
$$
and    we  assume also that there exists positive constant  $c_m,C_m$   such that
 \beq
 \label{As:m2}
 \tag{H4}
 \left\{
 \begin{aligned}
 & |m''(x)|\leq C_m ,&& \forall \, x\in \R \text{ and }\quad m''(0)=1, \\
 & m'(x)\neq 0, && \forall \,x\in(x_-,0)\cup(0,x_+), \\
 & m(x) \geq c_m x^2 && \forall \, x \in \R.
 \end{aligned}
 \right.
 \eeq

 Since Gaussian distributions have a central role in our results,  we define
\[
 G_\e(x) =\Gamma_{\sqrt{2}\,\e}= \dfrac{1}{\sqrt{2\pi}\e}\exp(-\f{x^2}{2\e^2}).
\]
An important part of our analysis will involve the probability density $G_1$:
\[
 G(x) :=G_1(x)= \dfrac{1}{\sqrt{2\pi}}\exp(-x^2/2).
\]
Let also $\sigma_k$, for all $k\geq 0$,  be the $k$-th order  moment of $G(x)$, that is
\begin{equation} \label{eq_sigmak}
\int_\R x^k G(x)=\sigma_k.
\end{equation}
Considering $q_\e\in L^2(\R,G_\e(x)dx)$ we define
\beq
\label{def-moments}
 M_{\e,0}=\int_\R q_\e(y)dy=1,\qquad  M_{\e,1}=\int_\R yq_\e(y)dy,\qquad   M_{\e,k} =\int_R (y-M_{\e,1})^k q_\e(y)dy.
\eeq

\subsection{Main results}

We first consider the case where $0$ is a global minimum point of $m$, with no other extremum point satisfying the admissibility condition \eqref{as:m-extremum}. We   then prove that any steady solution to \fer{eq_qepsform} is necessarily concentrated around the point $0$, {\em i.e.} it has small central moments. More precisely, we have the following result.

\begin{theorem}
\label{thm:concentration}
Assume   \fer{As:m} and\fer{As:m2} and fix $\delta \in (0,1)$. 
Let $\e >0$ and $\overline q_\e \in L^1(\mathbb R,(1+x^2)dx)$  be a steady solution to \fer{eq_qepsform} satisfying: \\
 \beq
 \label{As:q}
 \tag{H5}
 \int_\R m(x)\overline{q}_\e(x)dx<1-\delta.
 \eeq
  (i)  We have $\overline{q}_{\e} \in L^1((1+x^{2})^{\ell}dx)$ for all $\ell \in \mathbb N$ and there  exists positive constants $C_k$, for $k=1,2,...$, which may depend on $\delta$ but not on $\e$ such that for $\e\leq \e_0$ small enough, we have
\beq
\label{est-Mek}
  M_{\e,0}=1,\qquad |  M_{\e,1}|\leq C_1\e^2,\qquad |  M_{\e,k}^c-\e^k  \sigma_k|\leq C_k\e^{2+k}  .
\eeq
(ii) Assume additionally that for some positive constants {$\delta'$} and $A_1$ independent of $\e$,
\beq \tag{H6}
\label{cond-q/G-bound}
\int_{\mathbb R} \overline q_\e(y) e^{\f{{\delta'} y^2}{\e^2}}dy\leq A_1.
\eeq
Then, there exists a constant $C$, independent of $\e$, such that for $\e$ small enough,
\beq
\label{est-qe-Ge-L2}
\left\| \overline q_\e(\cdot)-G_\e(\cdot) (1+\f{  M_{\e,1}}{\e^2}x)\right\|_{L^2(\R,G_\e^{-1}(x)dx)}\leq C\e^2.
\eeq
\end{theorem}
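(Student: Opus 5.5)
The plan is to argue by moment identities, in the spirit of \cite{JG.MH.SM:23}, and then run a spectral argument in $L^2(G_\e^{-1}dx)$ built on Hermite polynomials.

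\textbf{Integrability.} The key structural fact is that $T_\e$ acts exactly on moments. If $Y,Y'$ are independent with law $\overline q_\e$ and $Z\sim\Gamma_\e$ is independent of them, then $T_\e[\overline q_\e]$ is the law of $(Y+Y')/2+Z$. Hence $\int (x-M_{\e,1})^k T_\e[\overline q_\e]$ is an explicit polynomial in the central moments $M_{\e,j}$, $j\le k$, and the moments of $\Gamma_\e$. The top-order term has coefficient $2^{1-k}M_{\e,k}$, plus lower-order products.

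To get $\overline q_\e\in L^1((1+x^2)^\ell)$, I multiply the equation by a truncated weight $\min(x^{2\ell},R)$ and use $m\ge c_mx^2$ from \eqref{As:m2}. The loss term $m\overline q_\e$ then controls one extra power of $x^2$, while the gain term $T_\e$ only involves lower moments times $2^{1-k}<1$. Induction on $\ell$ and letting $R\to\infty$ give finiteness.

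\textbf{Moment estimates.} Testing the equation against $1$, $x$ and $(x-M_{\e,1})^k$ gives, for $k\ge2$,
\[
(1-2^{1-k})M_{\e,k}-\text{(lower-order products)}-\e^k\sigma_k\text{-type terms}=-\int (m-\bar m)(x-M_{\e,1})^k\overline q_\e ,
\]
where $\bar m=\int m\overline q_\e$. The first-moment identity reads $0=-\int (m-\bar m)x\overline q_\e$.

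The main obstacle is the a priori concentration: showing that $M_{\e,2}=O(\e^2)$ and $M_{\e,1}\to0$ without assuming either. For this I would use \eqref{As:q} together with \eqref{As:m}/\eqref{monotony-m}. Mass sitting where $m>1$ must be small, since $\bar m<1-\delta$. The $k=2$ identity gives $M_{\e,2}/2=\e^2/2\cdot(\dots)-\int(m-\bar m)(x-M_{\e,1})^2\overline q_\e$. By a Jensen-type argument using the convexity of $m$ near $0$ and $m\ge c_mx^2$, the right-hand covariance should be $\ge c\,M_{\e,2}^2$ or $\ge 0$ up to small terms, which yields $M_{\e,2}\lesssim\e^2$.

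This step is the hardest part. The monotonicity of $m$ on $(x_-,x_+)$ from \eqref{As:m2} is needed to exclude mass concentrating elsewhere, for instance near another level set. If the direct route fails, I would first try arguing by contradiction via compactness as $\e\to0$: the limit of $\overline q_\e$ must be a Dirac mass, since $T_0$ forces $q=q*q$-type self-similarity, and that Dirac must sit at a critical point of $m$ with value $<1$. By \eqref{As:m} the only such point is $0$.

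Once the moments are small, I Taylor expand $m(x)=x^2/2+O(|x|^3)$ using $|m''|\le C_m$. The first-moment identity then gives $M_{\e,1}\approx -\tfrac12 M_{\e,3}+O(\e^4)$, hence $|M_{\e,1}|\le C\e^2$. I then bootstrap in $k$: each identity expresses $M_{\e,k}$ through lower moments plus $O(\e^{k+2})$, which gives \eqref{est-Mek}.

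\textbf{Part (ii).} I write $\overline q_\e=G_\e(1+h)$ with $x=\e y$ rescaled. In $L^2(G\,dy)$ the linearization of $T-I$ around $G$ is diagonal on Hermite polynomials $H_k$, with eigenvalues $2^{1-k}-1$ for $k\ge2$ and $0$ for $k=0,1$. Thus it is coercive with constant $1/2$ on $\{H_0,H_1\}^\perp$. The components along $H_0,H_1$ are fixed by mass and by $M_{\e,1}$, which gives the term $\frac{M_{\e,1}}{\e^2}x$.

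The remaining equation is
\[
(I-L)h^\perp=\text{quadratic}(h)+\e^2(\tfrac{y^2}{2}+\dots)(1+h).
\]
Assumption \eqref{cond-q/G-bound} guarantees $h\in L^2(G)$, and it controls the polynomial growth of $m$ against the Gaussian weight. Coercivity together with smallness of $h$ (from the moment bounds, via projections on $H_k$) then closes with $\|h^\perp\|\le C\e^2$, which gives \eqref{est-qe-Ge-L2}.
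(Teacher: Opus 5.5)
Your integrability step is fine; the paper does the same thing pointwise, via $\overline q_\e\le C(1+x^2)^{-1}T_\e[\overline q_\e]$ and the continuity of $T_\e$ on $L^1((1+x^2)^\ell dx)$. The real gap is in the step you flag as hardest, the a priori variance bound: you do not supply it, and both proposed substitutes fail.

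\textbf{The variance bound in part (i).} A Jensen or local-convexity argument cannot give the sign of $\int(m-\bar m)(x-M_{\e,1})^2\overline q_\e$. The reason is that $m$ is convex only near $0$, and at this stage nothing localizes $\overline q_\e$ there. Markov only gives $\int_{\{m>1\}}\overline q_\e<1-\delta$, which is not small.

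The compactness fallback is circular. In the $q$-variables the selection term is of order one, so a limit as $\e\to0$ solves $T_0[q]=(1+m-\bar m)q$, not $T_0[q]=q$. Nothing forces that limit to be a Dirac mass; proving it is exactly the same covariance question. Even if it worked, it would give only $M^c_{\e,2}\to0$, not $O(\e^2)$.

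\textbf{The missing idea.} Set $v=\int m\overline q_\e\in(0,1)$. The measure $(m-v)\overline q_\e$ has zero mass, and by your first-moment identity it also has zero first moment. Hence the right-hand side of the $k=2$ identity equals $\int(m-v)(x-a)(x-b)\overline q_\e\,dx$ for any $a,b$. Choosing $[a,b]=[a_-,a_+]$ from (H3') makes the integrand pointwise nonnegative, so $M^c_{\e,2}\le\e^2$ exactly. The paper phrases this as a single-crossing comparison between $\overline q_\e$ and $p_\e=m\overline q_\e/v$, which have equal mass and equal mean.

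\textbf{Two ordering issues in part (i).} First, your bootstrap in $k$ needs crude bounds $\int|x-M_{\e,1}|^k\overline q_\e\le C_k\e^k$ for all $k$ beforehand, because the Taylor remainders in the $k$-th identity involve $M^c_{\e,k+1}$ and $M^c_{\e,k+2}$. The paper gets these from the $2l$-th identity for large $l$:
\begin{itemize}
\item it drops $\int m(x-M_{\e,1})^{2l}\overline q_\e\ge0$;
\item it uses that $1-\int m\overline q_\e>\delta$ beats the gain coefficient $2^{1-2l}$;
\item it closes with interpolation against $M^c_{\e,2}\le\e^2$ and Young's inequality.
\end{itemize}

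Second, $|M_{\e,1}|\le C\e$ must be proved first. It comes from $|m'(M_{\e,1})|\le C\e$, $m(M_{\e,1})<1-\delta/2$ and the monotonicity in (H4). Your first-moment relation should read $M_{\e,1}M^c_{\e,2}\approx-\frac12M^c_{\e,3}$, and its remainder is $O(\e^4)$ only once $M_{\e,1}=O(\e)$ is known.

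\textbf{Part (ii).} Two steps are asserted rather than proved.
\begin{itemize}
\item (H6) does not by itself give $h=\overline N_\e/G\in L^2(G\,dx)$. It weights $\overline N_\e$ by $e^{\delta'x^2}$ with $\delta'$ possibly tiny, whereas you need $\overline N_\e^2e^{x^2/2}$ integrable. In fact you need a uniform bound on $\int x^4\overline N_\e^2G^{-1}dx$, because $m_\e$ is an unbounded multiplier on $L^2(G\,dx)$. Using $|m_\e(x)|\le C\e^2x^2$ together with that weighted bound is what makes $m_\e h^\perp$ an $O(\e^2)$ term. The paper gets it from $\overline q_\e\le2T_\e[\overline q_\e]$, which upgrades (H6) to the pointwise tail $\overline q_\e(x)\le A\e^{-1}e^{-3x^2/(8\e^2)}$ by iterating $\delta'\mapsto2\delta'/(1+2\delta')$.
\item More seriously, the closing step fails as stated. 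Testing against $h^\perp$ gives an inequality of the form $\frac12\|h^\perp\|\le C\e^2+c\|h^\perp\|^2$. This yields $\|h^\perp\|\le C\e^2$ only if $\|h^\perp\|$ is already known to be small. The moment bounds give $|\alpha_k|\le C_k\e^2$ only for each fixed $k$, with $k$-dependent constants, not smallness of $\sum_{k\ge2}\alpha_k^2$.
\end{itemize}

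The paper avoids this by induction on the truncation level. It proves $\sum_{k=2}^L\alpha_k^2\le C\e^4$ with $C$ independent of $L$. The $k$-th Hermite component of the quadratic part of $\tilde T_1$ involves only $\alpha_l\alpha_{k-l}$ with $1\le l\le k-1$, so at level $L+1$ it is controlled by the induction hypothesis. The full tail enters only linearly through $m_\e$, where the weighted bound above handles it.
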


We point out that \eqref{As:q} is not a very restrictive condition. One can indeed verify  by evaluating  \fer{eq_qepsform} at $x=0$ and using the positivity of $T_\e[q]$ that
\beq
\label{bound-int-mq}
\int_\R m(y)q_\e(y)dy< 1.
\eeq
{This assumption is also biologically relevant since it ensures that the population size is positive. The second equality in \fer{eq:n-from-q}  indeed indicates  that this is a necessary assumption for $\overline \rho$ to be positive. }

A key ingredient in the proof of this theorem is the control of the variance $M_{\e,2}$ of the steady distribution. The proof of this property relies strongly on the assumption that $0$ is the only admissible extremum point of $m$ satisfying \eqref{as:m-extremum}. The control of the other central moments then follows using the equations satisfied by these moments which involve a dissipative term, and using the fact that the terms involving the selection term $m(x)$ have small contributions. The proof of \eqref{est-qe-Ge-L2} relies on a spectral decomposition of the solution in terms of Hermite polynomials, which yield an orthonormal basis of $L^2(\mathbb R,G(x){\rm d}x)$.

We then study the general case where $m$ might have several admissible extremum points satisfying \fer{as:m-extremum}. We prove that there exists a unique concentrated steady solution around any admissible extremum point, satisfying a similar property to \fer{est-qe-Ge-L2}.
 \begin{theorem}
 \label{thm:local-steady}
Assume \eqref{as:m-extremum}-\eqref{as:D3m-polynom}. Then, there exists a   constant {$\overline C$} such that for all $C\geq {\overline C}$ and for all $\e \leq \e_0(C)$ small enough, there exists a  steady solution $ \overline q_\e\in L^2(\R,G_\e^{-1}(x)dx)$ to \fer{eq_qepsform} which satisfies
\beq
\label{estimate-qG}
\left\| \overline q_\e(\cdot)-G_\e(\cdot)\big(1+\f{M_{\e,1}}{\e^2}y\big) \right\|_{L^2(\R,G_\e^{-1}(x)dx)}\leq C\e^2,\qquad  | M_{\e,1}|\leq C\e^2.
\eeq
Moreover, {the steady solution satisfying the above conditions is unique}. {Such a steady solution is positive if $0$ is a minimum point of $m$, or if $m$ is even and   $0$ is a maximum point of $m$.}
 \end{theorem}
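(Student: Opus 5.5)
We will prove Theorem \ref{thm:local-steady} by a fixed-point argument in the weighted Hilbert space $L^2(\R,G_\e^{-1}(x)dx)$, after rescaling $x=\e y$ so that the reference Gaussian becomes $G$ and the natural space becomes $L^2(\R,G^{-1}(y)dy)$. Writing $\overline q_\e(\e y)=\e^{-1}G(y)(1+u(y))$, we work with $u\in L^2(\R,G(y)dy)$ and expand it in the orthonormal Hermite basis $(h_k)_{k\ge0}$. The key structural fact is that the recombination operator acts diagonally on the leading part: $T_\e[G_\e(1+u)]$ is quadratic in $u$, and its linear part is $G_\e$ times the operator that multiplies the Hermite coefficient of order $k$ by $2^{1-k/2}$ (two parents, each scaled by $1/2$ in the mean, with variance matched exactly by $G_\e$). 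Hence $L u:=T$-linear part $-u$ has eigenvalues $2^{1-k/2}-1$. This eigenvalue is $1$ for $k=0$, which is removed by the mass constraint $M_{\e,0}=1$, i.e. $u\perp h_0$. It is $0$ for $k=1$, which is why the mean $M_{\e,1}$ appears as a free parameter. For $k\ge2$ it is $\le -(1-2^{-1/2})$, which is a uniform spectral gap.

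Next we write the steady equation as $L u = \mathcal N(u)+ \mathcal S_\e(u)$. Here $\mathcal N$ collects the quadratic recombination terms, namely the bilinear Hermite interaction, which is bounded in $L^2(G)$ on the orthogonal of $h_0$. The term $\mathcal S_\e(u)=\big(m(\e y)-\int m(\e z)G(z)(1+u(z))dz\big)(1+u)$ is the selection term. Using \eqref{as:m-extremum}, we have $m(\e y)=\pm\e^2y^2/2+O(\e^3|y|^3(1+|\e y|^p))$ by \eqref{as:D3m-polynom}. Hence $\mathcal S_\e$ is of size $\e^2$ on polynomially weighted $L^2(G)$. Its leading part is $\pm\frac{\e^2}{2}(y^2-1)$, a pure $h_2$ component, plus $O(\e^2)$ terms linear in $u$. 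The unknowns are split as the $h_1$ coefficient $a$ (so $M_{\e,1}\approx \e a$) and the component $w$ in $\{h_0,h_1\}^\perp$. On $w$, $L$ is invertible with bounded inverse, so we set $w=L^{-1}P^\perp[\mathcal N(a h_1+w)+\mathcal S_\e]$. The $h_1$ projection of the equation gives a scalar equation of the form $\pm\e^2 a+O(\e^3)+(\text{quadratic terms in }a,w)=0$, coming from the $y\,m(\e y)$ moment: selection pulls the mean toward $0$ with rate $m''(0)=\pm1$. Dividing by $\e^2$ and solving determines $a=O(\e)$, i.e. $|M_{\e,1}|\le C\e^2$. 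A contraction on the ball $\{\|w\|\le C\e^2,\ |a|\le C\e\}$ for $C\ge\overline C$ and $\e\le\e_0(C)$ then gives existence and uniqueness within the class \eqref{estimate-qG}.

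\textbf{Main obstacle.} The hardest step is controlling the selection term in $L^2(G^{-1})$-type norms. Multiplication by the unbounded weight $m(\e y)$, which grows polynomially and may take negative values, is not bounded on $L^2(G)$. Moreover, the recombination map does not regularize high Hermite modes, although it does contract them geometrically by $2^{-k/2}$. We would first try to show that $(1-T)^{-1}$ on $\{h_0,h_1\}^\perp$, combined with this contraction of high modes, absorbs the polynomial weight, for instance by working in a space with the norm $\sum_k (1+k)^{s}|u_k|^2$ and using $\|y^j u\|\lesssim \|u\|_{s+j/2}$. Failing that, we would treat $m(\e y)(1+u)$ directly via an $L^\infty$-type bound on $u$ coming from the smoothing of the Gaussian convolution in $T_\e$. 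The smallness $\e^{j}|y|^j$ versus the Gaussian tail, together with the admissibility condition $m(0)<\min m+1$ (which keeps $1+m(x)-\int mq$ positive, so the equation can be solved as $q=T_\e[q]/(1+m-\int mq)$), should close the estimates.

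\textbf{Positivity.} We use the fixed-point form $\overline q=T_\e[\overline q]/(1+m-\int m\overline q)$. The numerator is positive when $\overline q\ge0$ and not identically zero, and the denominator is positive by admissibility. Thus the iteration preserves the cone of nonnegative functions whenever it stays in the contraction ball. When $0$ is a minimum, the approximate solution $G_\e(1+\ldots)$ is positive near the concentration zone, so positivity follows. In the even-maximum case, the symmetry class is invariant; there $a=0$ and the scalar $h_1$ equation is degenerate but trivially satisfied, so the same cone argument applies.
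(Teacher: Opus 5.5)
\textbf{Where your plan agrees with the paper.} Rescaling to $N_\varepsilon=hG$, expanding in Hermite modes with $\alpha_0=1$, extracting $\alpha_1$ from a scalar equation whose pivot $(H_0,m_\varepsilon H_0)_G-(H_1,m_\varepsilon H_1)_G\approx-m''(0)\varepsilon^2$ is of order $\varepsilon^2$, and contracting on $\{|\alpha_1|\le C\varepsilon,\ \|\overline\alpha_2\|\le C\varepsilon^2\}$ is the paper's architecture. One slip: $\tilde T_1[H_0,H_k]=2^{-k}H_k$, so the linearized recombination multiplies the $k$-th coefficient by $2^{1-k}$, not $2^{1-k/2}$. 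With your formula the neutral mode would be $k=2$, and the gap on $k\ge2$ is $1/2$.

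\textbf{The gap: the unbounded selection term.} The step you flag as the main obstacle is left open, and your first remedy cannot work. With $L$ the recombination-only operator, $L^{-1}$ on $\{H_0,H_1\}^\perp$ has eigenvalues $(2^{1-k}-1)^{-1}\to-1$. It is bounded but gains nothing on high modes. Meanwhile $\mathcal S_\varepsilon$ contains $m_\varepsilon w$, and multiplication by $y^2$ has entries of size $k$ in the Hermite basis, since $y^2H_k=\sqrt{(k+1)(k+2)}H_{k+2}+(2k+1)H_k+\sqrt{k(k-1)}H_{k-2}$. So it costs a factor $1+k$ in any Hermite--Sobolev norm, and nothing in $w\mapsto L^{-1}P^\perp[\cdots]$ recovers it. Worse, on modes $k\gtrsim\varepsilon^{-2}$, which live at $|y|\gtrsim\varepsilon^{-1}$, $m_\varepsilon$ is not even small, so no perturbative treatment can close. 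The $L^\infty$ fallback is not available in this class either: for $h\in L^2(G)$, $\tilde T_1[h,h]$ may grow like $e^{cy^2}$.

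\textbf{The missing idea.} Put the multiplication by $m_\varepsilon$ into the linear operator and use its \emph{sign}. The paper inverts $\overline{\mathcal L}[\overline\alpha_2]_k=(2^{1-k}-1+(H_0,m_\varepsilon H_0)_G)\alpha_k-(\sum_{l\ge2}\alpha_lH_l,m_\varepsilon H_k)_G$ by coercivity (Lemma \ref{lem_invL}). By \eqref{as:m-extremum}, $1+m_\varepsilon\ge1+\min m>0$, so you can fix $k_0$ with $1+m_\varepsilon\ge2^{-k_0}$ for all $\varepsilon$. On $h=\sum_{k\ge k_0+2}\alpha_kH_k$ the quadratic form is then at most $(2^{-(k_0+1)}+O(\varepsilon^2))\|h\|^2-\int(1+m_\varepsilon)|h|^2G\le-2^{-(k_0+2)}\|h\|^2$. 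The finitely many modes $2\le k\le k_0+1$ have diagonal $\le-1/2$, and their $m_\varepsilon$-couplings and the cross terms are $O(\varepsilon^2)$ by Lemma \ref{lem_normmHk}. Galerkin truncation and weak limits give $\overline{\mathcal L}^{-1}$ with an $\varepsilon$-independent bound. Everything left in the nonlinearity is then bounded on $\ell^2$: $\tilde T_1$ (Lemma \ref{lem:T-cont}), the scalar $\int m_\varepsilon hG$, and the fixed vector $m_\varepsilon H_1$ of norm $O(\varepsilon^2)$. So admissibility is not merely what keeps a denominator positive; it is what makes the linear problem invertible.

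\textbf{Positivity.} Your argument conflates two maps. The contraction you build acts on Hermite coefficients through $L^{-1}$ and has no reason to preserve signs. The sign-preserving map $q\mapsto T_\varepsilon[q]/(1+m-\int mq)$, renormalized to unit mass, is never shown to be a contraction. In fact one checks that its linearization at the steady state has an eigenvalue $1-m''(0)\varepsilon^2+O(\varepsilon^3)$, carried essentially by $H_1$. It is therefore expanding at a maximum, and at a minimum it contracts only at rate $O(\varepsilon^2)$; proving convergence of that iteration would require the coercivity analysis all over again. Nor does $L^2(G_\varepsilon^{-1})$-closeness to $G_\varepsilon(1+M_{\varepsilon,1}x/\varepsilon^2)$ give any pointwise sign in the tails. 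That profile is itself negative for $x<-\varepsilon^2/M_{\varepsilon,1}$ when $M_{\varepsilon,1}>0$.

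The paper gets positivity dynamically from Theorem \ref{thm_longtime}. Nonnegative continuous data in $\mathcal U^{(\varepsilon)}(C)$, e.g.\ $G$ itself, generate nonnegative global solutions (local Cauchy theory plus positivity from the authors' earlier work). These converge in $L^2(G^{-1})$ to $\overline N_\varepsilon$, hence $\overline N_\varepsilon\ge0$. At a maximum this only works within even data, where $\alpha_1\equiv0$ removes the expanding direction; that is why evenness is assumed there.
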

\begin{remark}
Notice that, combining the result above with Theorem \ref{thm:concentration}, we obtain that under assumptions  \fer{As:m}--\fer{As:m2}, for $\e$ small enough, the steady solution is unique in the set of functions satisfying condition \fer{cond-q/G-bound}. Moreover, we will show later in the article that condition \eqref{estimate-qG} implies that, for all $k\geq 2$, there exists a constant $C_k$ such that
\[
 |  M_{\e,k}^c-C_k\e^k|\leq C_k\e^{k+2}.
 \]
\end{remark}

Theorem \ref{thm:local-steady} implies that, when there exist several admissible extremum points of $m$, then there are several steady solutions to problem  \fer{eq_qepsform}. One could wonder whether   the steady solution is unique when there is only one admissible extremum point. 
In this respect, Theorem \ref{thm:concentration} implies that any steady solution is concentrated with small central moments. Although these properties significantly narrow the range of potential steady solutions, they are still insufficient to establish a  uniqueness result in the most general space, for instance the larger space $L^1((1+|m(x)|)dx,\R)$ that guarantees all terms in \eqref{eq_qepsform} are well defined.

 One can also wonder whether there might exist non-concentrated steady solutions to problem \fer{eq_qepsform}. Theorem \ref{thm:concentration} implies that this is not the case when $0$ is the only admissible extremum point of $m$. However, we believe that when $m$ has several extremum points there might exist non-concentrated steady solutions. Again, let's consider a situation where $m$ is even and it has one local maximum at $0$ {and} two minimum points at $-x_1$ and $x_1$. We also assume that $m(0)=0>m(x_1)+1$ and hence $0$ is not an admissible extremum point and there does not exist any concentrated steady solution at $0$ (see Appendix \ref{sec:nonexistence}). Therefore there exist only two concentrated steady solutions which are non-symmetric, being concentrated around one of the minimum points,  $-x_1$ or $x_1$.  We next assume that the initial condition $q_{\e,0}$ in  \fer{eq_qepsform} is even. Since $m$ is even, the problem \fer{eq_qepsform} preserves the symmetry of the solution and hence the solution remains symmetric for all times. If we expect that in long time $q_\e$ converges to a steady solution, then this steady solution has to be symmetric and hence non-concentrated. We expect however that such a steady solution would be unstable.

The proof of this result also relies on a spectral decomposition of the problem using Hermite polynomials and the fact that the terms involving the selection term $m$ have small contributions. This decomposition allows us to reduce considerably the nonlinearity of the problem. We then write the problem as the sum of a linear operator, with bounded inverse, and a nonlinear operator, satisfying a contraction property. This allows us to prove the existence of a steady solution satisfying the properties above. The positivity of such a steady solution then follows from the stability result provided in the next theorem:
 \begin{theorem} \label{thm_longtime_q}
Assume \eqref{as:m-extremum}-\eqref{as:D3m-polynom}. Then we have two alternatives:\\
(i) {Assume that  $m^{''}(0) >0$. Then, for any $C$ and $\varepsilon\leq \e_0(C)$ sufficiently small, any $q_0$ satisfying} 
\beq
\label{cond-q0}
\left\|  q_0(x)-G_\e(x)\big(1+\f{M_1}{\e^2}x\big) \right\|_{L^2(\R,G_\e^{-1}(x)dx)}\leq C\e^2,\quad   M_{1} =\int {x}q_0(x)dx, \quad |M_1| \leq C\e^2,
\eeq
  yields a unique global solution  $q_{\varepsilon} \in C([0,\infty);L^2(\mathbb R;G_{\e}^{-1}(x){dx}))$  to \eqref{eq:qe}  which satisfies, for some constants $A_1$ and $A_2$,
  \beq
  \label{eq-exp-conv}
 \left\| q_{\varepsilon}(t,\cdot) -  \overline q_{\varepsilon}(\cdot)  \right\|_{L^2(\mathbb R,G_\e^{-1}(x){\rm d}x)}\leq A_1\exp(-\e^2A_2 t) \left\| q_{\varepsilon,0}(\cdot) -  \overline q_{\varepsilon}(\cdot)  \right\|_{L^2(\mathbb R,G_{\e}^{-1}(x){\rm d}x)}.
\eeq
(ii)   {Assume that $m^{''}(0) <0$ and that $m$ is even. Then, for any $C>0$ and $\varepsilon\leq \e_0(C)$ sufficiently small, any even  initial data $q_0 \in L^2(\mathbb R;G_\e^{-1}(x){\rm dx})$ satisfying \eqref{cond-q0},} yields a unique global solution  $q_{\varepsilon} \in C([0,\infty);L^2(\mathbb R;G^{-1}_{\e}(x){dx}))$ to \eqref{eq:qe}  which satisfies, for some constants $A_1$ and $A_2$,
 \[
 \left\| q_{\varepsilon}(t,\cdot) -  \overline q_{\varepsilon}(\cdot)  \right\|_{L^2(\mathbb R,G_\e^{-1}(x){\rm d}x)}\leq A_1\exp(-A_2 t) \left\| q_{\varepsilon,0}(\cdot) -  \overline q_{\varepsilon}(\cdot)  \right\|_{L^2(\mathbb R,G_{\e}^{-1}(x){\rm d}x)}.
\]
 \end{theorem}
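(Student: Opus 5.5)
Our plan is to rescale to $\varepsilon$-independent variables and study the perturbation $u=q-\overline q_\e$ in the Hermite basis, where recombination acts diagonally. Set $q(t,x)=\e^{-1}Q(t,x/\e)$, so that $L^2(\R,G_\e^{-1}dx)$ becomes (up to a harmless factor $\e^{-1}$) $L^2(\R,G^{-1}dy)$. The operator $T_\e$ becomes $T_1$ and the selection term becomes $m(\e y)=\pm\e^2 y^2/2+O(\e^3|y|^3(1+|\e y|^p))$ by \eqref{as:D3m-polynom}. Writing $Q=G(1+h)$, the Hermite polynomials $H_k$ form an orthonormal basis of $L^2(G\,dy)$. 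The key structural fact is the explicit action of $T_1$ on $G\,H_k$, obtained from a Mehler-type generating function computation. For perturbations $u=G\sum_k a_kH_k$ of $G$ with $a_0=0$ (mass conservation), the linearization $D T_1[G]u-u$ is diagonal with eigenvalues $2^{1-k/2}-1$. These vanish for $k=2$, since $2\cdot 2^{-1}=1$, and also for $k=1$ (translation invariance), and are $\le 2^{-1/2}-1<0$ for $k\ge3$. The modes $k\ge3$ are therefore damped at an $O(1)$ rate, while the modes $k=1,2$ are neutral and must be controlled by selection.

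We next linearize around the steady state $\overline Q_\e$ given by Theorem \ref{thm:local-steady}. We write the equation for $u=Q-\overline Q_\e$ as
\[
\p_t u=\mathcal L_\e u+\mathcal N_\e(u),
\]
where $\mathcal L_\e=\mathcal L_0+\e^2\mathcal S+O(\e^3)$, with $\mathcal L_0$ the diagonal recombination part and $\mathcal S$ coming from $\mp\frac{y^2}{2}\,\cdot$ together with the normalization term $\int m\,u$. The estimate \eqref{estimate-qG} on $\overline Q_\e-G(1+\ldots)$ is used so that $\overline Q_\e$ can be replaced by $G$ in $\mathcal L_\e$ up to $O(\e^2)$ errors. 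The remainder is the nonlinear part of $T_1$, a bilinear form bounded in $L^2(G^{-1})$ after Cauchy–Schwarz, since the kernel convolved with two weights $G^{-1}$ is controlled.

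We then project $\mathcal S$ onto the neutral modes. Multiplication by $y^2$ maps $GH_1$ to a combination of $GH_1$ and $GH_3$, and $GH_2$ to a combination of $GH_0,GH_2,GH_4$. This gives the reduced $2\times2$ block
\[
\dot a_1=\mp c_1\e^2 a_1+\ldots,\qquad \dot a_2=\mp c_2\e^2a_2+\ldots
\]
with $c_1,c_2>0$. This is the expected ODE: the mean moves toward $x_m$ at rate $\e^2$, and the variance relaxes. The central step is a Lyapunov functional $\mathcal E=\sum_{k\ge3}a_k^2+\sum_{k=1,2}a_k^2$, possibly with small cross-term corrections $\e^2\beta a_ka_j$ to absorb the $\e^2$ coupling between the neutral and the damped modes. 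For $m''(0)>0$ we aim to show $\dot{\mathcal E}\le -A_2\e^2\mathcal E+C\e^{-1}\mathcal E^{3/2}$; the nonlinear term carries the scaling from $u$ being $O(\e^2)$ relative to a density of size $\e^{-1}$. A bootstrap then gives global existence, with uniqueness from a Lipschitz local theory, and exponential decay \eqref{eq-exp-conv}, because \eqref{cond-q0} keeps $\mathcal E$ small compared with $\e^4$. For case (ii), evenness kills all odd Hermite modes, including the unstable mode $a_1$. The remaining neutral mode $a_2$ has to be handled using the fact that the variance mode is not actually neutral once the exact steady state and the $\int m\,u$ normalization are accounted for. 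The claimed $O(1)$ rate suggests instead that in the even sector one should work around $G_\e$ with the variance fixed. This is the step we expect to be the main obstacle: we would try to show that the $a_2$ equation contains the $O(1)$ damping $2^{0}-1$, coming from the rescaling relative to the exact variance $2\e^2$, combined with the positive definite contribution $-\e^2c_2$ despite the sign of $m''$.

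The main technical obstacle overall is bounding $\mathcal N_\e$ and the $O(\e^3|y|^3)$ selection remainder in the weighted space $L^2(G^{-1})$. Polynomial growth against $G^{-1}$ weights is delicate: multiplication by $y^3$ is unbounded on this space. We would handle it by also propagating a slightly stronger norm, with weights $G^{-1}(1+y^2)^{N}$, or by using that the $k\ge3$ damping dominates $\e^3\|y^3u\|$ after splitting high and low Hermite frequencies.
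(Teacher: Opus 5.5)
Your plan has genuine gaps. First, the spectrum is miscomputed. From $T_1[\partial_x^kG,\partial_x^lG]=2^{-(k+l)}\partial_x^{k+l}G$, the linearized recombination $u\mapsto 2T_1[G,u]-u$ has eigenvalue $2^{1-k}-1$ on $GH_k$, not $2^{1-k/2}-1$. So only the translation mode $k=1$ is neutral. The variance mode $k=2$ relaxes at rate $1/2$; this is just the identity $\int(x-M_{\e,1})^2T_\e[q]\,dx=\e^2/2+M^c_{\e,2}/2$. In your framework $a_2$ would be anti-damped at rate $\e^2$ when $m''(0)<0$, and you leave this unresolved, so item (ii) is not proved. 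With the correct spectrum there is no obstacle: evenness kills $a_1$, and every remaining mode is damped at an $O(1)$ rate. That is exactly where the $O(1)$ rate in (ii) comes from.

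Second, your closing argument cannot work for the data allowed by \eqref{cond-q0}. After rescaling, \eqref{cond-q0} only gives $|\alpha_1^0|\le C\e$, and the steady state also has $\alpha_1^\infty=O(\e)$. In particular, replacing $\overline Q_\e$ by $G$ in the linearization costs $O(\e)$, not $O(\e^2)$. Hence $\mathcal E(0)\sim\e^2$, not $\ll\e^4$. An inequality $\dot{\mathcal E}\le-A_2\e^2\mathcal E+C\e^{-1}\mathcal E^{3/2}$ (even without the factor $\e^{-1}$) then does not close: the cubic term is of order $\e^3$ against a dissipation of order $\e^4$.

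What is needed is structure rather than size. Recombination preserves the mean, so the $k=1$ equation has no quadratic recombination term: the sum $\sum_{l=1}^{k-1}$ is empty. Its only nonlinear feedback goes through selection coefficients of size $O(\e^2)$, which the $O(\e^2)$ damping $(H_0,m_\e H_0)_G-(H_1,m_\e H_1)_G$ dominates. The quadratic forcing of the modes $k\ge2$ by $a_1$ (e.g. $\frac{\sqrt2}{4}a_1^2$ for $k=2$) is absorbed by their $O(1)$ damping together with a large weight on $|a_1|^2$. The paper first proves by continuation that $|\alpha_1(t)|\le\overline C_1\e$ and $\|\overline\alpha_2(t)\|\le\overline C_2\e^2$ for all $t$. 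Only then does it estimate $\beta=\alpha-\alpha^\infty$, whose equation is linear in $\beta$ with coefficients controlled by these bounds, via the functional $\frac{4\tilde K_m}{|D_0|}|\beta_1|^2+\|\overline\beta_2\|^2$.

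Finally, multiplication by $m_\e$ (or by $\e^3y^3$) is unbounded on $L^2(G\,dx)$, and no $O(1)$ recombination damping dominates it, so a perturbative treatment of the selection remainder fails. The paper needs no smallness there. On the high modes $\sum_{l\ge k_0+2}\alpha_lH_l$ it only uses the lower bound $1+m_\e\ge 2^{-k_0}$, which makes the multiplication term dissipative. It uses $\|m_\e H_l\|_{L^2(G\,dx)}\le K_l\e^2$ only for the finitely many low modes. That lower bound is the admissibility condition $\min m>-1$ in (H1). Your plan never uses it, yet some lower bound on $1+m_\e$ is indispensable here.
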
 
 
 Notice that the exponential convergence in case (ii) is faster than case (i). The $\e^2$ in front of $A_2$ appears in \fer{eq-exp-conv} because of the coefficient $\alpha_1$ which converges slowly in the general case. If we assume that $m$ is even also in the stable case, with an even initial condition, then the coefficient $\alpha_1$ will remain equal to zero for all times and we will have a similar rate of exponential convergence as in case (ii).

We also point out that in Theorem \ref{thm:local-steady}  we obtain existence of a steady solution whatever the sign of $m^{''}(0)$,  we do not consider necessarily a local minimum. However, since the positivity result relies on the stability result above, we obtain the positivity of the steady solutions which are concentrated around maximum points, only when $m$ is even. Indeed, when $m$ is even we can still show that the steady solution is    stable when considering only symmetric initial conditions. Since the operator keeps the positivity of the solution, then starting from positive and symmetric initial conditions, we obtain that the solution converges to a positive steady solution. We believe that when $m$ is not even, there would still exist some positive initial conditions which would yield long time convergence to the steady solution and hence implying its positivity. However, it is harder in the general case to identify the appropriate initial conditions with this property.

   \subsection{State of the art}

 Several works in the mathematical community have already addressed the question of justifying the "Gaussian population" approximation. Our work is closely related to \cite{VC.JG.FP:19,FP:23} where a similar model with a similar scaling was studied using a perturbative analysis based on the Hopf-Cole transformation.  The method in  \cite{VC.JG.FP:19,FP:23} was inspired by the analysis of models with asexual reproduction leading to Hamilton-Jacobi equations \cite{OD.PJ.SM.BP:05,GB.BP:08,GB.SM.BP:09}. \cite{VC.JG.FP:19} provided an approximation of locally concentrated steady states of the problem and \cite{FP:23} provided an asymptotic analysis of the time dependent problem, characterizing solutions, with almost Gaussian shapes, which are concentrated around an evolving dominant trait. 
  Our results  go beyond the work in \cite{VC.JG.FP:19} in two ways. Firstly, we introduce a new method based on the analysis of moments following  \cite{JG.MH.SM:23} and a new Hermite expansion of the solution. We believe that the methods introduced in \cite{JG.MH.SM:23} and this article will facilitate the analysis of more complex models with spatial or temporal heterogeneity. While \cite{VC.JG.FP:19,FP:23} provide very interesting results on the homogenous problem and a robust method to understand the behavior of solutions to more complex models with heterogeneity \cite{LD:20,JG.OC.EB:23}, the rigorous application of the perturbative approach to more complex models seems hard, if not out of reach. Secondly, we   provide a local stability result for the concentrated steady solutions. Moreover, in a particular framework where we expect to have a unique steady solution, we prove that any steady solution is necessarily concentrated and provide a uniqueness result in a certain class of solutions.   
 
 More precisely, our results above  provide the existence of steady solutions close to a Gaussian distribution with a multiplicative correction of type $G_\e(1+\f{M_1}{\e^2}x)$ in the space of $L^2(\mathbb R,G_\e^{-1}(x){\rm d}x)$. The work in \cite{VC.JG.FP:19} also provides a multiplicative correction from the Gaussian distribution. However, their method relies on a Hopf-Cole transformation. They look  for approximations of type $e^{-\f{(x-\bar x)^2+\e^2v(x)}{2\e^2}}$, with an unknown $v\in C^3(\R)$  such that $(1+|z|)^\alpha |D^k v(z)|\in L^{\infty}(\R)$, for $k=1,2,3$. They obtain then the existence of concentrated steady solutions using a perturbative analysis in this space.
Theorem \ref{thm:concentration} implies however that the space of  $L^2(\mathbb R,G_\e^{-1}(x){\rm d}x)$ is also a natural candidate for  the study of the   solutions. Working with such a space allows us to reduce the nonlinearity of the problem and leads to a more direct proof.

 A closely related model with a spatial heterogeneity and  with a different scaling was studied in \cite{GR:17} (see also \cite{SM.GR:13,PM.GR:25}) using the analysis of the moments and a contraction property of the reproduction operator in the Wasserstein distance. The method in  \cite{JG.MH.SM:23} extended this method {to the analysis of the time-dependent version of the model   under consideration in this paper}. Yet,   the analysis of the solutions in $L^2(\mathbb R,G_\e^{-1}(x){\rm d}x)$ in the present article is completely new.

 Considering a model  involving an infinitesimal model and a quadratic selection function but considering discrete time, in \cite{VC.TL.DP:23} the authors proved the uniqueness and the stability of the steady solution. Later in  \cite{VC.DP.FS:23}, the authors studied again a model with discrete time but with a strongly convex selection function. Using a contraction property of a certain Fisher information, they prove the uniqueness of the steady solution, under a decay assumption on the tails of the solution and they prove the stability of such a steady solution under some condition on the initial condition.  
 
 Other models involving a sexual reproduction have been studied in \cite{NF.BP:21,BP.MS.CT:22,LD.SM:22}. In \cite{BP.MS.CT:22} the authors provided an asymptotic analysis on a model with asymmetric reproduction term, where  the trait is mostly inherited from the female. \cite{LD.SM:22} studied a model where the trait is coded by quantitative alleles at two loci in a haploid sexually reproducing population. \cite{NF.BP:21} provided a non-expanding transport distance that allowed them to study a model with sexual reproduction but considering constant birth and death rates.
 
 Notice that the infinitesimal model is also related to models within kinetic theory involving a collision operator (see for instance   \cite{PD.AF.GR:14} on an alignment model). A specificity of the models in evolutionary biology is that the collision operator is combined with a multiplicative operator which is not very common in the kinetic theory. 
 
 Hermite polynomials have been introduced in the kinetic
theory in \cite{HG:49}, leading to various consequences in both numerical and theoretical aspects. They have been for instance used to study the solution of the Boltzmann equation (see e.g. \cite[Section 8]{AB:88}). They have also been used to provide numerical schemes for different equations in kinetic theory  (see e.g. \cite{MB.FF:22,MB.FF:23}). Our work is more related to \cite{FF.FG:25} where a multiplicative operator is also involved in the model. To the best of our
knowledge, Hermite polynomials have not been used previously in the study of the infinitesimal model \fer{eq_main}.

\subsection{Plan of the article}
The outline of the paper is as follows. In the next section, we introduce the Hermite polynomials and highlight their relevance for our analysis. We rephrase our existence/stability result ({\bf Theorem \ref{thm:local-steady}} and {\bf Theorem \ref{thm_longtime_q}}) in this setting (see {\bf Theorem \ref{thm_stat}} and {\bf Theorem \ref{thm_longtime}}) and conclude with some properties of Hermite coefficients of the selection rate $m.$ 

We split what remains of the paper in terms of the underlying mathematical appraoch. The {\bf Section \ref{sec:conc1}} and the {\bf Section \ref{sec:conc2}} contain proofs for {\bf Theorem \ref{thm:concentration}} combining mainly moments arguments possibly linked with their interpretation in terms of Hermite coefficients. The Hilbert structure of the $L^2(\mathbb R,G(x) dx)$ space is deeply used in the two last sections that contain proofs for {\bf Theorem \ref{thm_stat}} and {\bf Theorem \ref{thm_longtime}}. We postpone technical remarks to the appendix as well as a complementary computation illustrating the optimality of assumption \ref{as:m-extremum}.

\section{{The Hermite-polynomials framework}}
\label{sec:Hermite}

In some parts of the article, it will be more convenient to work with $T_1$ instead of $T_\e$. We will hence use the following change of variables
\[
N_\e(t,x)= \e q_\e(t,\e x), \qquad \overline N_\e(x)=\e \overline q_\e(\e x),
\]
which leads  to the following problems  written in terms of $T_1$:
\begin{equation} \label{eq_Nepsform}
\begin{cases}
\p_t N_\e(t,x)=\big( T_1[N_\e](t,x)-N_\e(t,x)\big) -\big(m(\e x)-\int_\R m(\e y)N_\e(t,y)dy\big) N_\e(t,x),\\
N_\e (0,x)=N_{\e,0}(x)=\e q_{\e,0}(\e x),\qquad \int_\R N_\e(0,y)dy=1.
\end{cases}
\end{equation}
\begin{equation} \label{eq_Nepsform_steady}
\begin{cases}
0=\big( T_1[\overline N_\e](x)-\overline N_\e(x)\big) -\big(m(\e x)-\int_\R m(\e y)\overline N_\e(y)dy\big) \overline N_\e(x),\\
  \int_\R \overline N_\e(y)dy=1.
\end{cases}
\end{equation}
The probability density  $G$  then plays a crucial role in our analysis since it satisfies $T_1[G]=G$
and it is centered in the local minimum of $m.$ As a consequence, straightforward computations entail that:
\[
T_1[\partial_x^k G,\partial_x^l G] = \dfrac{1}{2^{k+l}} \partial_x^{k+l} G
\] 
for arbitrary $(k,l) \in \mathbb N^2.$ This is a first cornerstone that indicate a good way of stating our system is to look for solutions to \eqref{eq_Nepsform} in the form $N_{\varepsilon} = hG.$ Indeed, we recall that the family 
\[
H_k(x) = \dfrac{(-1)^k}{\sqrt{k!}} \dfrac{\partial_x^{k} G(x)}{G(x)}\,, \quad \forall \, k \in \mathbb N 
\]
yields an orthonormal basis of $L^2(\mathbb R,G(x){\rm d}x)$.
The above computations indicate a good rephrasing of the bilinear mapping $T_1$ is: 
\[
\tilde{T}_1[H_{k},H_{\ell}] = G^{-1} T_1[H_{k}G,H_{\ell}G].
\]
Indeed, we have that:
\beq
\label{tildeT1-Hermite}
\tilde{T}_1[H_k,H_{\ell}] =   \dfrac{\sqrt{(k+l)!} }{2^{k+l}} \dfrac{H_{k+l}}{\sqrt{k!}\sqrt{l!}}.
\eeq
 for arbitrary $(k,\ell) \in \mathbb N^2.$  
We can then extend the formula by bilinearity and continuity {(see Appendix \ref{sec:appendixT})} to define a bilinear continuous mappping $\tilde{T}_1 : L^2(\mathbb R,G(x)dx)^2 \to L^2(\mathbb R,G(x)dx)$ with the formula:
\[
\tilde{T}_1[h_1,h_2] = G^{-1} T_1[h_1G,h_2G] \qquad \forall \, (h_1,h_2) \in (L^2(\mathbb R,G(x)dx))^2.
\]

{Let $\overline N_\e=\sum_{k=0}^\infty \alpha_k H_kG$, with $\overline N_\e$ solution of \fer{eq_Nepsform_steady}. One can notice that, since $H_0=1$, $\alpha_0=\int_\R \overline N_\e(x)dx=1$. }
{Moreover, the problem \fer{eq_Nepsform_steady}  reads as follows in terms of the Hermite coefficients $(\alpha_k)_{k \in \mathbb N}$:}
\begin{multline}
\label{eq-alpha}
\left( \dfrac{1}{2^{k-1}}  - 1  +(H_0,  m_{\varepsilon} H_0)_G   \right) \alpha_k 
 - \left( \sum_{l=2}^{\infty} \alpha_l H_l ,  m_{\varepsilon}  H_k\right)_G  \\
= m_{k}^{(\varepsilon)}- \left[ \dfrac{\sqrt{k!}}{2^k} \sum_{l=1}^{k-1} \dfrac{\alpha_l \alpha_{k-l}}{\sqrt{l! (k-l)!}}  + \sum_{l=1}^{\infty} \alpha_k  \alpha_l  m_l^{(\varepsilon)}\right]+ \left(   H_1 ,  m_{\varepsilon}  H_k\right)_G \alpha_1 ,
 \end{multline}
  for all $k \geq 1,$ with $m_\e(x)=m(\e x)$ and $m_k^{(\varepsilon)} := (m_{\varepsilon}, H_k)_G$. 

\medskip

Below, we will use the Hilbert structure of $L^2(\mathbb R,G(x)dx)$ endowed with the canonical 
scalar product $(\cdot,\cdot)_G.$  We use the identification with $\ell^2(\mathbb N_0)$  via the decomposition on the orthonormal basis $(H_k)_{k\in  \mathbb N_0}.$
In technical parts, we will have to treat   separately indices $k=0$ and $k=1$. Hence, we introduce $\overline{\alpha}_k = (\alpha_l)_{{l \geq k}}$  when $k=1,2.$  Correspondingly, we shall write $l \in \mathbb N_1$ (or $l \in \mathbb N$) for $l \geq 1$, respectively $l \in \mathbb N_2$ for $l\geq 2.$ We keep the classical $\mathbb N_0$ to include the origin $k=0.$

\subsection{Hermite-polynomial expansions vs moments analysis.}

  In summary, we have three formulations for the analysis of \eqref{eq:qe} and its stationary variant \eqref{eq_qepsform}. Firstly, we can work with the unknown $q_{\e}$ solution to the nondimensional system. Below, we denote $M_{\e,k}$ the moments and $M^{c}_{\e,k}$ the centered moments of $q_{\e}:$ 
\[
M_{\e,k} = \int_{\mathbb R} x^{k}q_{\e}(x)dx, \quad M_{k}^{c} = \int_{\mathbb R} (x-M_1)^{k} q_{\e}(x) dx \quad 
\forall \, k \in \mathbb N. 
\]
Secondly, we have the rescaled unknown $N_{\e}.$  Its moments are denoted with tildas:
\[
\widetilde{M}_{\e,k} = \int_{\mathbb R} y^{k}N(x)dx, \quad \widetilde{M}_{\e,k}^{c} = \int_{\mathbb R} (y-\widetilde{M}_{\e,1})^{k} N(y) dy \quad 
\forall \, k \in \mathbb N. 
\]
Notice that the moments of $N_{\e}$ relate to the moments of $q_\e$ in the following way
 \beq
 \label{link-M-tildeM}
  M_{\e,1}(t)=\e \widetilde M_{\e,1}(t),\qquad  M_{\e,k}^c(t)=\e^k \widetilde M_{\e,k}^c(t), \quad \text{for all  $k\in \mathbb{N}_2$}.
\eeq

Finally, we have the multiplier pertubation of the Gaussian $h$ of $N_{\e}$ so that $N_{\e} = hG.$ This latter unknown belongs to $L^2(\mathbb R,G(x)dx)$ and will be expanded in terms of Hermite polynomials.

The Hermite coefficients $(\alpha_k)_{k\in \mathbb N_0}$   of the multiplier perturbation $h$  are related to the central moments of its associated solution $N_{\e}$. Indeed,  we recall that $H_0=1$, $H_1=x$ and hence
\[
\alpha_0 = \int_{\mathbb R} {h(x)}G(x){\rm d}x =\widetilde M_0= 1, \qquad
\alpha_1 = \int_{\mathbb R} x {h(x)}G(x){\rm d}x=\widetilde M_1.
\]
Higher coefficients measure the distance between $N_{\e}$ to the Gaussian profile in a certain sense given in Lemma \ref{link-Hermite-moments} below.  In this statement $\sigma_{k}$ refers {to} Gaussian moments \eqref{eq_sigmak}. 

\begin{lemma}
  Let $h \in L^2(\mathbb R,G(x) dx)$ and $N = hG.$
 \label{link-Hermite-moments}
 We have,  with obvious notations to denote moments:
\[
 \alpha_k=\f{1}{\sqrt{k!}}\big(\widetilde M_{k}^c-\sigma_k-R(k)\big), \quad \forall \, k \geq 2,
\]
 with 
\[
 \begin{array}{rl}
  R(k)&=\sum_{l=0}^{k-1} \begin{pmatrix} k \\ l  \end{pmatrix}\widetilde M_{1}^{k-l}(-1)^{k-l}\sum_{j=0}^l\f{l!}{(l-j)!\sqrt{j!}}\alpha_j \sigma_{l-j}\\
  &+\sum_{j=1}^{k-1} \f{k!}{(k-j)!\sqrt{j!}}\alpha_j\sigma_{k-j}
  .
  \end{array}
\]
 \end{lemma}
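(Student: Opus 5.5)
The plan is a direct computation: express the raw moments of $N=hG$ through the Hermite coefficients $\alpha_j=(h,H_j)_G$, then pass to centered moments with the binomial formula and isolate the coefficient of $\alpha_k$. Throughout we use $\alpha_0=\widetilde M_0=1$ (as recalled above for probability densities) and $\alpha_1=\widetilde M_1$.

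\textbf{Step 1 (justification).} Since $h\in L^2(\mathbb R,G(x)dx)$ and every polynomial $x^l$ lies in $L^2(\mathbb R,G(x)dx)$, Cauchy--Schwarz shows that all moments $\widetilde M_l=\int x^l N = (h,x^l)_G$ are finite. Moreover $h\mapsto (h,x^l)_G$ is continuous on $L^2(G)$. We may therefore expand $h=\sum_j\alpha_jH_j$ inside the pairing and obtain
\[
\widetilde M_l=\sum_{j\ge0}\alpha_j\,(x^l,H_j)_G .
\]

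\textbf{Step 2 (key identity).} We compute $(x^l,H_j)_G$. By definition of $H_j$,
\[
(x^l,H_j)_G=\frac{(-1)^j}{\sqrt{j!}}\int_{\mathbb R}x^l\,\partial_x^jG(x)\,dx .
\]
Integrating by parts $j$ times produces no boundary terms, because $G$ and its derivatives decay like a Gaussian while $x^l$ grows polynomially. Each integration by parts contributes a factor $-1$, which cancels the $(-1)^j$, and differentiates $x^l$. Hence
\[
(x^l,H_j)_G=\frac{1}{\sqrt{j!}}\frac{l!}{(l-j)!}\,\sigma_{l-j}\quad (j\le l),\qquad (x^l,H_j)_G=0\quad (j>l).
\]
Consequently, for every $l$,
\[
\widetilde M_l=\sum_{j=0}^{l}\frac{l!}{(l-j)!\sqrt{j!}}\,\alpha_j\,\sigma_{l-j}.
\]
The vanishing for $j>l$ can also be seen directly from the orthogonality of $H_j$ to all polynomials of degree less than $j$.

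\textbf{Step 3 (centering).} We expand
\[
\widetilde M_k^c=\sum_{l=0}^{k}\binom{k}{l}(-1)^{k-l}\widetilde M_1^{k-l}\,\widetilde M_l .
\]
The terms with $l\le k-1$, after inserting the formula from Step 2, give exactly the first double sum in $R(k)$. For the term $l=k$ we split the inner sum over $j$ into three parts:
\begin{itemize}
\item the index $j=0$ gives $\alpha_0\sigma_k=\sigma_k$;
\item the index $j=k$ gives $\frac{k!}{\sqrt{k!}}\alpha_k=\sqrt{k!}\,\alpha_k$;
\item the indices $1\le j\le k-1$ give the second sum in $R(k)$.
\end{itemize}
This yields $\widetilde M_k^c=\sigma_k+\sqrt{k!}\,\alpha_k+R(k)$. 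Solving for $\alpha_k$ gives the stated formula.

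The only point needing care is Step 2: the repeated integration by parts with vanishing boundary terms, together with the legitimacy of exchanging the infinite Hermite expansion with the moment integrals (Step 1). Both are routine consequences of the Gaussian weight. The remaining bookkeeping is to match the index ranges in $R(k)$, in particular that its first sum stops at $l=k-1$.
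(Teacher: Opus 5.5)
Your proposal is correct and follows the paper's proof essentially line by line. Like the paper, you expand $\widetilde M_k^c$ binomially, compute $\int x^l H_j G\,dx=\frac{l!}{(l-j)!\sqrt{j!}}\sigma_{l-j}$ for $j\le l$ (and $0$ otherwise) by repeated integration by parts, and split the $l=k$ term into its $j=0$, $j=k$ and $1\le j\le k-1$ parts. Your Step 1 (continuity of $h\mapsto (h,x^l)_G$, which justifies integrating the Hermite series term by term) and your explicit use of $\alpha_0=1$ spell out two points the paper leaves implicit.
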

 
\begin{proof}
 Given $k \geq 2,$  we compute
 $$
\begin{aligned}
 \widetilde M_{k}^c&=\int_\R (x-\widetilde M_{1})^kN(x)dx\\
 &=\sum_{l=0}^k \widetilde M_{1}^{k-l}(-1)^{k-l} \begin{pmatrix} k \\ l  \end{pmatrix}  \int_\R   x^lN(t,x)dx\\
 &=\sum_{l=0}^k \widetilde M_{1}^{k-l}(-1)^{k-l}\begin{pmatrix} k \\ l  \end{pmatrix}  \sum_{j=0}^{+\infty} \alpha_j\int_\R x^l H_j(x) G(x)dx.
\end{aligned}
 $$
 We next notice that,   whatever the value of $l,j$,
 $$
\begin{aligned}
 \int_\R x^l H_j(x) G(x)dx&=\int_\R \f{x^l (-1)^j}{\sqrt{j!} }\p_x^jG(x)dx
  \\
  &=\mathds{1}_{j\leq l}\int_\R \f{l(l-1)\dots (l-j+1)}{\sqrt{j!} }x^{l-j}G(x)dx\\
  &=\mathds{1}_{j\leq l}\f{l!}{(l-j)!\sqrt{j!}}\sigma_{l-j}.
 \end{aligned}
  $$
 We deduce that
  $$
 \begin{aligned}
 \widetilde M_{k}^c= &\sum_{l=0}^k \widetilde M_{1}^{k-l}(-1)^{k-l}\begin{pmatrix} k \\ l  \end{pmatrix}  \sum_{j=0}^{l} \f{l!}{(l-j)!\sqrt{j!}}\alpha_j\sigma_{l-j}\\
 =&\sigma_k+\sqrt{k!} \,\alpha_k +\sum_{l=0}^{k-1}\widetilde M_{1}^{k-l}(-1)^{k-l}\begin{pmatrix} k \\ l  \end{pmatrix}  \sum_{j=0}^{l} \f{l!}{(l-j)!\sqrt{j!}}\alpha_j\sigma_{l-j}\\
 &+ \sum_{j=1}^{k-1} \f{k!}{(k-j)!\sqrt{j!}}\alpha_j\sigma_{k-j}.
 \end{aligned}
 $$
  This concludes the proof.
 \qed 
 \end{proof}

 \subsection{Rephrasing our main results with Hermite-polynomial expansions.}
 
 In order to obtain our  results on the local existence,  uniqueness and stability of a steady state, it will be convenient to work with the formulation of the problem given in 
  \eqref{eq_Nepsform}. Such local results will be given working with  a certain set, defined as follows, for given $\varepsilon >0$ and $C >0$,
\[
\mathcal U^{(\varepsilon)}(C) := \left\{ N = \sum_{k=0}^{\infty} \alpha_k H_k G   \text{ s.t. } \alpha_0=1 \quad |\alpha_1| \leq C  \varepsilon 
\quad 
 \| \overline{\alpha}_2 \|_{\ell^2(\mathbb N_2)} \leq  C \varepsilon^2
\right\}.
\]
This set represents a small neighborhood of  $G$ in $L^2(\R,G^{-1}(x)dx)$. In particular  the gap between the central moments of the elements of $\mathcal U^{(\varepsilon)}(C)$ and the central moments of the Gaussian $G$ is small. One can  indeed verify using Lemma \ref{link-Hermite-moments} that for any $N\in \mathcal U^{(\e)}(C)$, the central moments of $N$ satisfy, for some positive constants $C_k$,
\[
 \widetilde M_0=1,\quad |\widetilde M_1|\leq C \e ,\qquad |\widetilde M_{k}^c-\sigma_k |\leq C_k\e^2.
\]
Moreover, one can   verify that  $ \overline N_\e\in \mathcal U^{(\e)}(C)$, is equivalent with, ${\overline q_\e(x)=\frac{1}{\e}\overline N_\e(\frac{ x}{\e})} \in  L^2(\R,G^{-1}_\e(x)dx)$ and 
\[
\left\| \overline q_\e(y)-G_\e(y)\left(1+\f{  M_{\e,1}}{\e^2}y\right) \right\|_{L^2(\R,G_\e^{-1}(x)dx)}\leq C\e^2,\qquad   M_{\e,0}=1,\qquad |   M_{\e,1}|\leq C\e^2.
\]
%

  We obtain existence and uniqueness of  stationary solutions to \fer{eq_Nepsform_steady} in the set ${\mathcal U}^{(\e)}(C)$.
\begin{theorem} \label{thm_stat}
Assume \eqref{as:m-extremum}-\eqref{as:D3m-polynom}. There exists  ${\overline{C}} >0$ such that, for all $C>{\overline{C}}$ and $0<\varepsilon<\e_0(C)  $ sufficiently small,  there is a unique stationary solution $\overline N_{\varepsilon} $ to {\eqref{eq_Nepsform_steady}} in $\mathcal U^{(\varepsilon)}(C)$.
\end{theorem}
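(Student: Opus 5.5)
The plan is to recast \eqref{eq_Nepsform_steady} as a fixed-point problem for the Hermite coefficients $\alpha=(\alpha_k)_{k\in\mathbb N_0}$ of $h=\overline N_\e/G$, with $\alpha_0=1$. The system is \eqref{eq-alpha}. I split the unknown into the scalar $\alpha_1$ and the tail $\overline\alpha_2\in\ell^2(\mathbb N_2)$, and write the system as
\[
\mathcal L_\e(\alpha_1,\overline\alpha_2)=\mathcal F_\e(\alpha_1,\overline\alpha_2),
\]
with $\mathcal L_\e$ linear and $\mathcal F_\e$ collecting the source $m^{(\e)}_k$ and the quadratic terms. Existence and uniqueness in $\mathcal U^{(\e)}(C)$ should then follow from the Banach fixed-point theorem for $\mathcal L_\e^{-1}\mathcal F_\e$ on the closed set $\{|\alpha_1|\le C\e,\ \|\overline\alpha_2\|\le C\e^2\}$. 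The constant $\overline C$ is fixed by the size of the source terms, and $\e_0(C)$ is chosen so that the quadratic terms are absorbed.

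\textbf{Preliminary estimates on $m_\e$.} By \eqref{as:m-extremum}--\eqref{as:D3m-polynom},
\[
m_\e(x)=\pm\tfrac{\e^2x^2}{2}+r_\e(x),\qquad |r_\e(x)|\le A\,\e^3|x|^3\big(1+|\e x|^{p}\big).
\]
Hence $m^{(\e)}_1=O(\e^3)$, $m^{(\e)}_2=\pm\e^2/\sqrt2+O(\e^4)$ (by parity of the quadratic part), and $\|(m^{(\e)}_k)_{k\ge3}\|_{\ell^2}=O(\e^3)$, using Gaussian moments of $|x|^{3}(1+|\e x|^p)$. Moreover $(H_0,m_\e H_0)_G=\pm\e^2/2+O(\e^4)$. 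The bilinear term $\frac{\sqrt{k!}}{2^k}\sum_l \frac{\alpha_l\alpha_{k-l}}{\sqrt{l!(k-l)!}}$ is exactly the Hermite expansion of $\tilde T_1$ in \eqref{tildeT1-Hermite}, so it is bounded bilinearly on $\ell^2$ (Appendix \ref{sec:appendixT}).

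\textbf{Invertibility of the linear part.} On the tail, the relevant operator is
\[
h\mapsto P_{\ge2}\big[(1+m_\e-c_\e)h\big]-D h,\qquad D=\mathrm{diag}(2^{1-k})_{k\ge2},\quad c_\e=(H_0,m_\e H_0)_G=O(\e^2).
\]
By \eqref{as:m-extremum} we have $1+m_\e-c_\e\ge\eta>0$ uniformly, so the multiplication part is coercive on $L^2(G)$. However, $D$ has norm $1/2$, and this does not a priori beat $\eta$. I would split modes at a threshold $K$ with $2^{1-K}\le\eta/2$. On high modes, coercivity dominates. On modes $k<K$, the weights of $H_k$ live in $|x|\lesssim K$, where $m_\e=O(\e^2K^2)$, so the operator there is $-(1-2^{1-k})+O(\e^2)\le -1/2+o(1)$. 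I would glue the two regimes by a Schur-complement argument (or a contradiction/compactness argument as $\e\to0$) to get $\|\mathcal L_\e^{-1}\|\le C_0$ on the tail, uniformly in $\e$. This is the step I expect to be the main obstacle: the multiplier $m_\e$ is unbounded on $\ell^2$ (multiplication by $x^2$ is tridiagonal in the $H_k$ with coefficients of order $k$), so every pairing $(\overline\alpha_2,m_\e H_k)_G$ must be handled in the function-space picture rather than mode by mode.

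\textbf{The degenerate $k=1$ equation and closing.} For $k=1$ the diagonal coefficient $2^{0}-1+c_\e$ is only $O(\e^2)$. Combined with the term $-(H_1,m_\e H_1)_G\alpha_1$ coming from the right-hand side, the effective coefficient is $c_\e-(H_1,m_\e H_1)_G=\mp\e^2+O(\e^3)$. This is nondegenerate precisely because $m''(0)=\pm1$. The source is $m_1^{(\e)}=O(\e^3)$, and the coupling $(\overline\alpha_2,m_\e H_1)_G$ is $O(\e^2)\|\overline\alpha_2\|=O(\e^4)$; dividing by $\e^2$ gives $|\alpha_1|\le C\e$. For the tail, the source is $O(\e^2)$, and the quadratic terms are $O(\alpha_1^2+\|\overline\alpha_2\|^2+\e^2|\alpha_1|)$ with small prefactors. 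Here $\alpha_1^2$ enters only through $k=2$ with weight $2^{-2}\sqrt2$, which gives $C^2\e^2/4$. This is exactly where the size of $\overline C$ matters, and I expect it to require $C\e$ small, i.e.\ $\e\le\e_0(C)$. Lipschitz estimates for $\mathcal F_\e$ of the same form, with an extra factor $C\e$, then give a contraction for $\e\le\e_0(C)$. This yields existence and uniqueness of $\overline N_\e$ in $\mathcal U^{(\e)}(C)$.
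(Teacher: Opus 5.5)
Your architecture matches the paper's. You work with the Hermite coefficients and single out $\alpha_1$, because its pivot $D^{(\varepsilon)}=(H_0,m_\varepsilon H_0)_G-(H_1,m_\varepsilon H_1)_G=\mp\varepsilon^2+O(\varepsilon^3)$ is only of order $\varepsilon^2$. You get an $\varepsilon$-uniform inverse on the tail by cutting at the modes where $2^{1-k}$ drops below $\inf(1+m_\varepsilon)$, and you close with a Banach fixed point.

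For the tail you need neither a Schur complement nor compactness. The operator is symmetric, and the paper simply bounds its quadratic form from below. On the finitely many low modes and in the cross terms it uses $\|m_\varepsilon H_l\|_{L^2(G)}\le K_l\varepsilon^2$ (Lemma \ref{lem_normmHk}). On the high modes it uses $\int(1+m_\varepsilon)|h_>|^2G\ge 2^{-(k_0+1)}\|h_>\|^2$. It then builds $\overline{\mathcal L}^{-1}$ by Galerkin truncation (Lemma \ref{lem_invL}).

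\textbf{The genuine gap is in closing the fixed point.} You correctly isolate the source $\tfrac{\sqrt2}{4}\alpha_1^2$ in the $k=2$ equation. On your set $\{|\alpha_1|\le C\varepsilon,\ \|\overline\alpha_2\|\le C\varepsilon^2\}$ this source can be as large as $\tfrac{\sqrt2}{4}C^2\varepsilon^2$, against a target of $C\varepsilon^2$. Both scale like $\varepsilon^2$, so taking $\varepsilon\le\varepsilon_0(C)$ cannot help. Concretely, the tail operator acts on mode $2$ essentially as multiplication by $-\tfrac12$. Hence the admissible input $(\alpha_1,\overline\alpha_2)=(C\varepsilon,0)$ is sent to $\alpha_2'=\tfrac{\sqrt2}{2}C^2\varepsilon^2+O(\varepsilon^2)$, with a $C$-independent $O$. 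This leaves the ball for every $C$ above a threshold independent of $\varepsilon$. So your set is not invariant, and you cannot cover all $C>\overline C$ as the statement requires.

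\textbf{The missing observation is that the $k=1$ equation fixes $|\alpha_1|$ independently of $C$.} From \eqref{eq-alpha1}, $D^{(\varepsilon)}\alpha_1=m_1^{(\varepsilon)}+O(C\varepsilon^4)+O(C^2\varepsilon^5)$ with $|m_1^{(\varepsilon)}|\le K_m\varepsilon^3$. Hence $|\alpha_1|\le K\varepsilon$ with $K$ depending only on $m$, once $\varepsilon\le\varepsilon_0(C)$. Your own ``dividing by $\varepsilon^2$'' step yields exactly this, but you then reinserted the weaker bound $C\varepsilon$.

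The paper builds this in by eliminating $\alpha_1$ first. Lemma \ref{lem_alpha1} selects, for each $\overline\alpha_2$ in the ball, the unique root $\alpha_1[\overline\alpha_2]\in(-K_m\varepsilon,K_m\varepsilon)$ of the scalar quadratic \eqref{eq-alpha1}; the other root has size $\gtrsim 1/\varepsilon$. It then shows this root is $K_m$-Lipschitz in $\overline\alpha_2$. The contraction runs on $\overline\alpha_2$ alone, and $\alpha_1^2$ costs at most $K_m^2\varepsilon^2$, which is absorbed by taking $\overline C$ large. Incidentally, the paper's stated bound $|\overline{\mathcal Q}[\bar 0]|\le K_m\varepsilon^3$ in Lemma \ref{lem_Q} drops this very term, which is of exact order $\varepsilon^2$ when $m'''(0)\neq 0$. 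The argument survives only because that term's size is $C$-independent.

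Within your joint scheme the repair is as follows. Prove existence on the anisotropic set $\{|\alpha_1|\le K\varepsilon,\ \|\overline\alpha_2\|\le C\varepsilon^2\}$, which is invariant for $C\ge\overline C(m)$ and $\varepsilon\le\varepsilon_0(C)$. Then obtain uniqueness in all of $\mathcal U^{(\varepsilon)}(C)$ by either of two routes:
\begin{itemize}
\item the $O(C\varepsilon)$ Lipschitz bound of your map on the larger set, in the unweighted norm $|\alpha_1|+\|\overline\alpha_2\|$;
\item the a priori bound on $\alpha_1$ just described, which puts every solution in $\mathcal U^{(\varepsilon)}(C)$ inside the smaller set.
\end{itemize}
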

  
 Working in $\mathcal U^{(\varepsilon)}(C)$ does not guarantee that our solution is positive. This latter property yields from the stability analysis since we recall \cite{JG.MH.SM:23} that positive initial data yield positive solutions  to the time-dependent problem. Hence, our second main result concerns the stability of such stationary solutions. In this analysis, we restrict again to solutions in the form $hG$ with $h \in L^2(\mathbb R,G(x)dx)$, or equivalently with $N \in L^2(\mathbb R,G^{-1}(x)dx)$.  We prove:

 \begin{theorem} \label{thm_longtime}
Assume \eqref{as:m-extremum}-\eqref{as:D3m-polynom}. Then we have two alternatives:
\begin{itemize}
\item[(i)] 
{Assume that $m^{''}(0) >0$.} {Then, for any $C>0$ and $\varepsilon\leq \e_0(C)$ sufficiently small}, any  $N_0  \in \mathcal U^{(\varepsilon)}({{C}})$ yields a unique global solution $N_{\varepsilon}= h_{\varepsilon} G$ to \eqref{eq_Nepsform} with $h_{\varepsilon} \in  C([0,\infty);L^2(\mathbb R;G(x)dx))$   which satisfies, for some constants $A_1$ and $A_2$,
\[
  \left\| \dfrac{N_{\varepsilon}(t,\cdot) -  \overline N_{\varepsilon}(\cdot)}{G} \right\|_{L^2(\mathbb R,G(x){\rm d}x)}\leq A_1\exp(- A_2 \e^2t) \left\| \dfrac{N_{\varepsilon,0}(\cdot) -  \overline N_{\varepsilon}(\cdot)}{G} \right\|_{L^2(\mathbb R,G(x){\rm d}x)}.
\]

\item[(ii)]  {Assume that  $m^{''}(0) <0$ and that $m$ is even. For any $C >0$ and $\varepsilon$ small}, {any even initial data $N_0 \in \mathcal U^{(\varepsilon)}(C)$} yields a unique global solution $N_{\varepsilon}= h_{\varepsilon} G$ to \eqref{eq_Nepsform} with $h_{\varepsilon} \in  C([0,\infty);L^2(\mathbb R;G(x)dx))$  which satisfies, for some constants $A_1$ and $A_2$,
\[
  \left\| \dfrac{N_{\varepsilon}(t,\cdot) -  \overline N_{\varepsilon}(\cdot)}{G} \right\|_{L^2(\mathbb R,G(x){\rm d}x)}\leq A_1\exp(-A_2t) \left\| \dfrac{N_{\varepsilon,0}(\cdot) -  \overline N_{\varepsilon}(\cdot)}{G} \right\|_{L^2(\mathbb R,G(x){\rm d}x)}.
\]

 \end{itemize}
 Consequently, {in these two cases,} the steady solution given in Theorem \ref{thm_stat} is positive.
 \end{theorem}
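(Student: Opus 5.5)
The plan is to work entirely in Hermite coordinates. I would write $N_\e(t)=\sum_k \alpha_k(t)H_kG$ and $\overline N_\e=\sum_k\overline\alpha_k H_kG$, and study the perturbation $\beta_k=\alpha_k-\overline\alpha_k$. Mass conservation gives $\beta_0=0$.

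\textbf{Step 1: the linearised system.} Taking the time-dependent analogue of \eqref{eq-alpha} and subtracting the steady version, the equation for $\beta$ has the form
\[
\dot\beta_k=-\Big(1-\tfrac{1}{2^{k-1}}\Big)\beta_k+\mathcal L_\e[\beta]_k+\mathcal Q[\beta,\beta]_k .
\]
The three pieces are as follows.
\begin{itemize}
\item The diagonal part comes from $\tilde T_1$ via \eqref{tildeT1-Hermite}, using $\alpha_0=1$.
\item $\mathcal L_\e$ collects the terms built from $m_\e$ and the cross terms $\tilde T_1[\overline\alpha,\beta]$.
\item $\mathcal Q$ is quadratic.
\end{itemize}

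\textbf{Step 2: operator bounds.} Using \eqref{as:D3m-polynom}, I would show $m_\e=\pm\e^2x^2/2+O(\e^3|x|^3(1+|\e x|^p))$ in $L^2(G)$, together with $\|m_\e H_k\|_G\lesssim \e^2 (1+k)$. The growth in $k$ needs care: it has to be controlled either by weights or by the off-diagonal structure, since multiplication by $x^2$ only couples $k$ with $k\pm2$.

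The bilinear term satisfies $\|\tilde T_1[a,b]\|\lesssim\|a\|\|b\|$ by Appendix \ref{sec:appendixT}. Because $\overline N_\e\in\mathcal U^{(\e)}(C)$, the cross terms with $\overline\alpha_l$, $l\ge 2$, are $O(\e^2)$. The cross term with $\overline\alpha_1=O(\e)$ only shifts $\beta_k$ into $\beta_{k+1}$, with the contracting factor $2^{-k}$.

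\textbf{Step 3: energy estimates on modes $k\ge2$.} For $k\ge2$ the diagonal damping is at least $1/2$. A weighted energy $E_2=\sum_{k\ge2}w_k\beta_k^2$ should then satisfy
\[
\dot E_2\le -cE_2+C\e^2\beta_1^2+C\|\beta\|^3 .
\]

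\textbf{Step 4: the mode $k=1$, which is the main obstacle.} For $k=1$ the recombination factor $1-2^{0}$ vanishes, so any damping must come from selection. Computing, I expect
\[
\dot\beta_1=-m''(0)\e^2\beta_1+O(\e^2\|\bar\beta_2\|)+O(\e^3|\beta_1|)+\text{quadratic terms}.
\]
The leading term reflects that $(H_1,m_\e H_1)_G-(H_0,m_\e H_0)_G\approx \e^2$ times $m''(0)$.

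In case (i), where $m''(0)>0$, this gives damping at rate $\e^2$. I would then take the Lyapunov functional
\[
\mathcal E=\beta_1^2+K E_2 ,
\]
with $K$ large but fixed. The coupling $\e^2\beta_1\|\bar\beta_2\|$ is absorbed by Young's inequality, split between $\e^2\beta_1^2/4$ and $C\e^2 E_2\ll cE_2$. This yields
\[
\dot{\mathcal E}\le -c\e^2\mathcal E+C\mathcal E^{3/2}.
\]

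\textbf{Step 5: closing the argument.} Local well-posedness in $L^2(G)$ follows from the bilinear bound and a Picard iteration. A bootstrap then works as long as $\mathcal E^{1/2}\ll\e^2$. This is the delicate point: the initial distance is only $O(\e)$ in $\beta_1$ and $O(\e^2)$ in $\bar\beta_2$, so the cubic terms must be shown to carry extra $\e$ factors. Concretely, the quadratic terms involving $\beta_1^2$ feed only into higher modes, with factor $2^{-k}$ and extra smallness from $m_\e$. I would track $\beta_1$ and $\bar\beta_2$ separately, with $|\beta_1|\le 2C\e$ and $\|\bar\beta_2\|\le C'\e^2$, to close the bootstrap.

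\textbf{Case (ii).} When $m$ is even and $N_0$ is even, every odd coefficient vanishes for all time. In particular $\beta_1\equiv0$, and only modes $k\ge2$ remain, all with $O(1)$ damping. The estimate $\dot E_2\le -cE_2$ then gives an $\e$-independent rate, which is the source of the faster convergence in (ii).

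\textbf{Positivity.} Choose the positive datum $N_0=G$, which is even and lies in $\mathcal U^{(\e)}(C)$. Positivity is preserved by the flow, as recalled from \cite{JG.MH.SM:23}. The convergence in $L^2(G^{-1})$, hence in $L^1$, then shows that $\overline N_\e\ge0$.
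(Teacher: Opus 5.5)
Your overall architecture matches the paper's: Hermite coordinates, $\varepsilon^2$-damping of the first mode from $(H_1,m_\varepsilon H_1)_G-(H_0,m_\varepsilon H_0)_G\approx\varepsilon^2 m''(0)$, $O(1)$ damping for $k\ge2$, evenness killing $\beta_1$ in case (ii), and positivity via $N_0=G$. The gap is in Step 3, which has no valid mechanism for the high Hermite modes. Multiplication by $m_\varepsilon$ is unbounded on $L^2(\mathbb R,G(x)dx)$, and your bounds on $\|m_\varepsilon H_k\|_G$ grow with $k$. Neither weights nor the tridiagonal structure of $x^2$ rescues this: even for $m_\varepsilon=\pm\varepsilon^2x^2/2$, the couplings treated perturbatively have size $\varepsilon^2k$ and exceed the damping once $k\gtrsim\varepsilon^{-2}$. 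What is missing is the admissibility part of \eqref{as:m-extremum}, $\min m>-1$, which your proposal never uses and which cannot be dispensed with (cf. Appendix \ref{sec:nonexistence}). It gives $k_0$ with $1+m_\varepsilon\ge2^{-k_0}$. With an \emph{unweighted} $\ell^2$ energy, the tail $h_{>}=\sum_{k\ge k_0+2}\beta_kH_k$ then contributes, up to the $O(\varepsilon^2)$ shift $(H_0,m_\varepsilon H_0)_G$,
\[
\sum_{k\ge k_0+2}\Big(\frac{1}{2^{k-1}}-1\Big)\beta_k^2-\int_{\mathbb R}m_\varepsilon|h_{>}|^2G\,dx\le\Big(\frac{1}{2^{k_0+1}}-1\Big)\|h_{>}\|^2+\Big(1-\frac{1}{2^{k_0}}\Big)\|h_{>}\|^2=-\frac{1}{2^{k_0+1}}\|h_{>}\|^2 .
\]
Only the finitely many modes $2\le k\le k_0+1$ and the cross terms are then treated perturbatively, via Lemma \ref{lem_normmHk}. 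Weights $w_k$ that vary on the tail would destroy the sign of $\int m_\varepsilon|h_{>}|^2G$, so they are not an option. For the same reason, a Picard iteration in $L^2(G)$ must be run on the Duhamel form with the integrating factor $e^{-t(1+m_\varepsilon)}$, since $m_\varepsilon h$ is not Lipschitz there.

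The closing argument in Steps 4--5 also fails as written. Your own Step 3 produces a source $C\varepsilon^2\beta_1^2$ in $\dot E_2$, coming from the $O(\varepsilon)$ transfer $\beta_1\to\beta_2$ through $\tilde T_1$ with $\alpha_1^{(\infty)}$. In $\mathcal E=\beta_1^2+KE_2$ with $K$ \emph{large}, this becomes $KC\varepsilon^2\beta_1^2$ and overwhelms the $c\,\varepsilon^2\beta_1^2$ damping of the first mode. The weight must go the other way: the paper uses $\frac{4\tilde K_m}{|D_0|}|\beta_1|^2+\|\overline\beta_2\|^2$, exploiting that the reverse coupling $\overline\beta_2\to\beta_1$ is only $O(\varepsilon^2)$. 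As you observe, $\dot{\mathcal E}\le-c\varepsilon^2\mathcal E+C\mathcal E^{3/2}$ cannot be closed when $\beta_1=O(\varepsilon)$. The paper never treats a term as genuinely cubic. It first proves orbital stability of the solution itself (Propositions \ref{prop_orbstability_stable}--\ref{prop_orbstability_unstable}) by a continuation argument. That argument uses $\frac{d}{dt}|\alpha_1|^2+\frac{|D_0|}{2}\varepsilon^2|\alpha_1|^2\le K_m\varepsilon^4$ together with a damped inequality for $\|\overline\alpha_2\|^2$ (again via the $k_0$ splitting). It yields $|\alpha_1(t)|\le\overline C_1\varepsilon$ and $\|\overline\alpha_2(t)\|\le\overline C_2\varepsilon^2$ for all $t\ge0$. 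Only then does it write the equation for $\beta=\alpha-\alpha^{(\infty)}$ as a \emph{linear} system in $\beta$, using $\alpha_l\alpha_{k-l}-\alpha_l^{(\infty)}\alpha_{k-l}^{(\infty)}=\alpha_l\beta_{k-l}+\beta_l\alpha^{(\infty)}_{k-l}$. Its coefficients are controlled by those a priori bounds, so exponential decay needs no smallness of $\beta$ beyond them. Your suggestion of tracking $|\beta_1|\le2C\varepsilon$ and $\|\overline\beta_2\|\le C'\varepsilon^2$ separately points in this direction, but the bootstrap is neither set up nor closed.
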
 
 We point out again the discrepancy between the rate of convergence we obtain in item (ii) and the convergence rate in item (i). This is again due to the fact that item (i) allows non-zero first coefficient $\alpha_1$ in contrast with item (ii).
Notice that Theorems \ref{thm:local-steady} and \ref{thm_longtime_q} then follow from Theorems \ref{thm_stat} and \ref{thm_longtime}.

\subsection{ Hermite-polynomial expansion of source term $m$}

We note that $m_{\varepsilon} \in L^2(\mathbb R,G(x){\rm d}x)$ under the sole condition that it increases polynomially at infinity. This entails that $\overline{m}^{(\varepsilon)}_0 =(m_{k}^{(\e)})_{k\geq 0}\in \ell^2(\mathbb N)$ whatever $\varepsilon >0.$
The behavior of this sequence when $\varepsilon \to 0$ is the content of the following computation:
\begin{lemma} \label{lem_m}
There exists a constant $K_m \in (0,\infty)$ for which:
\[
\|m_{\varepsilon}\|_{L^2(\mathbb R,G(x){\rm d}x)} \leq K_m \varepsilon^2, \quad \forall \, \varepsilon \in (0,1).
\]
in particuliar $|m_k^{(\varepsilon)}| \leq K_m \varepsilon^2$  for all  $k \in \mathbb N_0.$ In case $k=1$ 
we have even the better estimate: 
\[
|m^{(\varepsilon)}_1| \leq K_m \varepsilon^3, \qquad  \forall \, \varepsilon \in (0,1).
\]
\end{lemma}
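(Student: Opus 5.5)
The plan is a Taylor expansion of $m$ around $0$ with integral remainder, fed directly into the Gaussian-weighted $L^2$ norm. By \eqref{as:m-extremum} we have $m(0)=0$ and $m'(0)=0$, so for every $y\in\R$
\[
m(y)=\int_0^1 (1-s)\, m''(sy)\, y^2\, ds .
\]
Using \eqref{as:D2m-polynom}, namely $|m''(z)|\leq 2A_m(1+|z|^{p+1})$, we get for $y=\e x$ and $\e\in(0,1)$:
\[
|m_\e(x)|=|m(\e x)|\leq A_m\,\e^2 x^2\,(1+|x|^{p+1}).
\]
The right-hand side is $\e^2$ times a polynomial in $|x|$, and such polynomials lie in $L^2(\R,G(x)dx)$. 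Hence
\[
\|m_\e\|_{L^2(\mathbb R,G(x)dx)}\leq \e^2\, A_m\Big(\int_\R x^4(1+|x|^{p+1})^2G(x)dx\Big)^{1/2}=:K\e^2 .
\]
The bound $|m_k^{(\e)}|\leq K\e^2$ for each $k$ then follows from Cauchy--Schwarz (or Bessel), since $\|H_k\|_G=1$.

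For $k=1$ I would push the expansion one order further, using that $H_1=x$ is odd. Writing $m''(0)=\pm1$ and using \eqref{as:D3m-polynom},
\[
m(y)=\pm\frac{y^2}{2}+R(y),\qquad R(y)=\int_0^1\frac{(1-s)^2}{2}\, m'''(sy)\, y^3\, ds .
\]
The remainder satisfies $|R(y)|\leq A_m|y|^3(1+|y|^p)$.

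Then
\[
m_1^{(\e)}=\int_\R m(\e x)\, x\, G(x)dx=\pm\frac{\e^2}{2}\int_\R x^3G(x)dx+\int_\R R(\e x)\,x\,G(x)dx .
\]
The first integral vanishes because $G$ is even. The second is bounded by $A_m\e^3\int_\R |x|^4(1+|x|^p)G(x)dx$ for $\e<1$. Taking $K_m$ as the maximum of the two constants obtained completes the proof.

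There is no real obstacle here. The only point needing care is that the polynomial growth bounds are applied at $sy$ with $|s|\leq1$, and that $\e<1$ is used to absorb the powers $\e^{p+1}$ and $\e^{p}$ into the constant.
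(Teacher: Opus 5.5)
Your proof is correct and follows essentially the same route as the paper's. For the $L^2(G)$ bound you use a second-order Taylor expansion with integral remainder together with (H2'), then Cauchy--Schwarz against $H_k$ for each coefficient. For $m_1^{(\varepsilon)}$ you use a third-order expansion, the vanishing of $\int_{\mathbb R} x^3 G(x)\,dx$ by parity, and (H2) on the remainder.
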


\begin{proof}
Recalling that $m(0) = m'(0) =0$  and \eqref{as:D2m-polynom}, we have:
\[
\begin{aligned}
\|m_{\varepsilon}\|^2_{L^2(\mathbb R,G(x){\rm d}x)}  & =  \int_{\mathbb R} |m_{\varepsilon}(x)- m_{\varepsilon}(0)|^2 G(x) {\rm d}x \\
& \leq \varepsilon^4 \int_{\mathbb R} {x^4} \left[  \int_0^{1} (1  -t) m_{\varepsilon}^{''}(t\varepsilon x) {\rm d}t\right]^2 G(x) {\rm d}x\\
& \leq  {4\varepsilon^{4}A_m^2} \int_{\mathbb R} x^4 {(1+ |x|^{p+1})^{2}}G(x){\rm d}x \\
& \leq C(p,A_m) \varepsilon^4 .
\end{aligned}
\]   
Eventually, we have proven that there exists a constant $K_m$ independent of $\varepsilon$ for which: 
\begin{equation} \label{eq_m_2}
\|m_{\varepsilon}\|_{L^2(\mathbb R,G(x){\rm d}x)}  \leq K_m \varepsilon^2\,, \quad \forall \, \varepsilon \in (0,1).
\end{equation}

\medskip

Regarding $m_1^{(\varepsilon)}$ we have a little better. Indeed, we recall that we computed {$H_1(x) = x.$} Hence, 
\[
m_1^{(\varepsilon)} = {  \int_{\mathbb R} (m_{\varepsilon}(x)- m_{\varepsilon}(0)) x G(x){\rm d}x}.
\]
However,  we have $m'(0) = 0$  so that, using the taylor expansion of $m$ in 
$0$ to order $3$ there holds:
\[
m_{\varepsilon}(x)  =  m^{''}(0) \dfrac{\varepsilon^2 x^2}{2} 
+ \varepsilon^3  x^3 \int_0^{1} \dfrac{(1 -t)^2}{2} m^{(3)}(\varepsilon x  t) {\rm d}t 
\]
and  
\[
m_1^{(\varepsilon)}  = {  \dfrac{m^{''}(0) \varepsilon^2}{2} \int_{\mathbb R}  x^3 G(x){\rm d}x + \varepsilon^3 \int_{\mathbb R} x^4  \int_0^{1} \dfrac{(1 -t)^2}{2} m^{(3)}(\varepsilon x t) {\rm d}t  G(x){\rm d}x}
\]
  By symmetry, the first term on the right-hand side vanishes and we conclude {using assumption \eqref{as:D3m-polynom}} that:
\begin{equation}
|m_1^{(\varepsilon)}| \leq K_m \varepsilon^3,  \qquad \forall \, \varepsilon \in (0,1).
\end{equation}
\qed \end{proof}

{\bf Remark.}
{\em{By generalizing the argument,  we can prove a similar  bound for  arbitrary coefficients  $m_k^{(\varepsilon)}$ provided that the successive derivatives of $m$
enjoy a polynomial growth condition similar to (H2).  Indeed,  given $k \in \mathbb N$ we use the Taylor expansion of $m_{\varepsilon}$ in $0$ to order $k-1$ with integral remainder.  We  obtain that there exists a polynomial $P_k$ of degree $\leq k-1$ such that:
\[
m_{\varepsilon}(x) = P_k(\varepsilon x) + \varepsilon^{k} x^k  \int_0^1 \dfrac{(1-t)^{k-1}}{(k-1)!} m^{(k)}(\varepsilon x t)  {\rm d}t 
\] 
where the polynomial growth of $m^{(k)}$ entails that:
\[
\left | \int_0^1 \dfrac{(1-t)^{k-1}}{(k-1)!} m^{(k)}(\varepsilon x t)  {\rm d}t \right| 
\leq A^{(k)}_m (1+ |x|)^{p_k}. 
\]
Since $P_k$ is of degree less than $(k-1),$ we have:
\[
(P_k(\varepsilon x), H_k)_G = 0
\]
and :
\[
|m_k^{(\varepsilon)}| \leq A^{(k)}_m \varepsilon^k \int_{\mathbb R}  (1+|x|)^{p_k} H_k G \leq  A^{(k)}_m \varepsilon^{k} \left[ \int_{\mathbb R} (1+|x|)^{2p} G(x){\rm d}x\right]^{\frac 12}.
\]
Eventually, we conclude that there exists a constant {$C_m^{(k)}$ independent of   $\varepsilon$} for which:
\begin{equation} \label{eq_F_infini}
|m_k^{(\varepsilon)}| \leq C_m^{(k)} \varepsilon^{k} \qquad \forall \, k \geq 1 \quad \forall \, \varepsilon < 1.
\end{equation}
Nevertheless, the constant $C_m^{(k)}$ depends a priori on $k$ and we should enforce a more stringent assumption to get uniform bounds in $k$ (which would be useless below anyway).  
}
}

For technical purpose below, we provide here also another estimate related to 
$m_{\varepsilon}.$
\begin{lemma} \label{lem_normmHk}
Let $l \in \mathbb N.$ There exists $K_l$  such that:
\[
\|m_{\varepsilon}{H_l}\|_{L^2(\mathbb R,G(x){\rm dx})} \leq K_l \varepsilon^2,\quad \forall \, \varepsilon \in (0,1).
\] 
\end{lemma}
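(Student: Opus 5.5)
The plan is to mimic the proof of Lemma \ref{lem_m}, simply carrying the extra factor $H_l$ through the computation. Since $m(0)=m'(0)=0$, Taylor's formula with integral remainder gives, for all $x\in\R$,
\[
m_{\varepsilon}(x) = \varepsilon^2 x^2 \int_0^1 (1-t)\, m^{''}(t\varepsilon x)\,{\rm d}t .
\]
Using \eqref{as:D2m-polynom} together with $\varepsilon<1$, we get the pointwise bound
\[
|m_{\varepsilon}(x)| \leq 2A_m\,\varepsilon^2 x^2 (1+|x|^{p+1}).
\]
This bound is uniform in $\varepsilon\in(0,1)$, because $|t\varepsilon x|\leq |x|$.

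Squaring and multiplying by $H_l^2 G$, we obtain
\[
\|m_{\varepsilon}H_l\|^2_{L^2(\mathbb R,G(x){\rm d}x)} \leq 4A_m^2\,\varepsilon^4 \int_{\mathbb R} x^4 (1+|x|^{p+1})^2 H_l(x)^2 G(x)\,{\rm d}x .
\]
It then remains to check that the last integral is finite. Here $H_l$ is a polynomial of degree $l$, since $\partial_x^l G = (-1)^l \mathrm{He}_l\, G$ with $\mathrm{He}_l$ the probabilists' Hermite polynomial. The integrand is therefore a polynomial times the Gaussian $G$, hence integrable. Its value is a constant $K_l^2/(4A_m^2)$ depending only on $l$, $p$ and $A_m$. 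Taking square roots gives the claim with $K_l$ independent of $\varepsilon$.

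There is no real obstacle here. The only points to watch are that the constant depends on $l$ through the degree of $H_l$, which is allowed by the statement, and that the polynomial growth assumption is applied at the point $t\varepsilon x$ rather than at $x$, which is handled by $\varepsilon<1$. If one wanted explicit constants, one could bound $\int x^{2j} H_l^2 G$ via the moments $\sigma_k$ of \eqref{eq_sigmak}, but this is unnecessary.
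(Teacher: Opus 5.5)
Your proof is correct and is essentially the paper's argument: Taylor expansion of $m$ at $0$ using $m(0)=m'(0)=0$, the polynomial bound \eqref{as:D2m-polynom} on $m''$ evaluated at $t\varepsilon x$ (uniform in $\varepsilon$ since $\varepsilon<1$), and finiteness of $\int_{\mathbb R} x^4(1+|x|^{p+1})^2 H_l^2\, G\,{\rm d}x$ because $H_l$ is a polynomial. Your weight $x^4$ after squaring is the right one; the paper's display writes $|x|^2$, which appears to be a typo. Either way the integral is finite and the conclusion is unchanged.
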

\begin{proof}
Let $l \in \mathbb N.$ Arguing as in the previous proof, we have:
\begin{align*}
\|m_{\varepsilon} {H_l}\|^2_{L^2(\mathbb R,G(x){\rm dx})} 
& = \int_{\mathbb R} |m(\varepsilon x)|^2 |H_l(x)|^2 G(x){\rm d}x \\
& \leq  K_m \varepsilon^4  \int_{\mathbb R} |x|^2 (1+|x|^{{p+1}})^2|H_l(x)|^2 G(x){\rm d}x 
\end{align*}
The last integral is bounded by comparing the decay of $H_l$ and $G$ at infinity.
\qed \end{proof}

\section{The concentration property of the steady states: the proof of Theorem \ref{thm:concentration}-(i)} \label{sec:conc1}

 We recall that we treat from now on equation \eqref{eq_qepsform} with $r= 1$ and $\alpha = \varepsilon > 0.$ Our proof splits in two phases. Firstly, we show that the variance and higher moments of a possible solution have the right scaling. This enables {us} to prove that $M_{\e,1}$ has the right scaling and the expected expansion for $M_{e,2}.$ With this technical material at-hand, we provide a proof for Theorem \ref{thm:concentration}-(i).

\subsection{Preliminaries.}
As mentioned in introduction, an important ingredient in the proof of Theorem \ref{thm:concentration}-(i) is to show that the phenotypic variance $M_{\e,2}$ is small. We  prove indeed that:
\begin{proposition}
\label{lem:moments-m-II}
Assume   \fer{As:m},   and let $\overline q_\e  \in L^1(\mathbb R, (1+x^{2l})dx),$ for all $l \in \mathbb N,$  {be a solution of} \fer{eq_qepsform}, with its central moments defined in \fer{def-moments}. \\
(i) We have
\beq
\label{est-Me2}
 M_{\e,2}^c\leq \e^2.
\eeq
(ii) Assume additionally \fer{As:q}. Then, for all $k\geq 2$, there exists a positive constant $C_k$, independent of $\e$, such that
\beq
\label{bound-Mk}
M_{\e,k}^{|c|}:=\int_\R |x-M_{\e,1}|^k\overline q_\e(x)dx\leq C_k\e^k.
\eeq
\end{proposition}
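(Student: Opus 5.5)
The plan is to obtain both estimates from the moment identities produced by testing \eqref{eq_qepsform} (with $r=1$, $\alpha=\e$) against polynomial weights centred at $M_{\e,1}$. The kernel $\Gamma_\e$ is centred with variance $\e^2/2$. Since $\int\overline q_\e=1$, the offspring trait $x=\frac{y+y'}{2}+z$ has mean $M_{\e,1}$ and variance $\frac{M^c_{\e,2}}{2}+\frac{\e^2}{2}$. Testing against $x$ and against $(x-M_{\e,1})^2$ therefore gives
\[
\int_\R (m-\overline m)(x-M_{\e,1})\overline q_\e=0,\qquad M^c_{\e,2}=\e^2-2\int_\R (m(x)-\overline m)(x-M_{\e,1})^2\overline q_\e(x)dx,
\]
where $\overline m=\int m\overline q_\e$. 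The integrability assumption on $\overline q_\e$ justifies these manipulations. Item (i) thus reduces to showing that the covariance $\mathrm{Cov}_{\overline q_\e}(m,(x-M_{\e,1})^2)$ is nonnegative.

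\textbf{Step 1: the covariance sign (main obstacle).} Because $\mathrm{Cov}(m,x)=0$, we may replace $(x-M_{\e,1})^2$ by $(x-c)^2$ for any $c$, since the two differ by an affine function of $x$. The idea is that, under \eqref{As:m}--\eqref{monotony-m}, $m$ is unimodal on the level sets $\{m\le v\}$, $v\le 1$: every sublevel set is an interval $[a_-(v),a_+(v)]$ containing $0$. This is where the assumption that $0$ is the only admissible extremum enters.

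I would write the covariance through the layer-cake formula $m-\overline m=\int_0^\infty(\mathds 1_{m>v}-\overline q_\e(\{m>v\}))\,dv$. For $v\le 1$ the set $\{m>v\}$ is the complement of an interval, so each layer is a covariance between the indicator of the complement of an interval and the convex function $(x-c)^2$. I would try to choose $c=c(v)$ at the midpoint of the interval; this makes $(x-c)^2$ larger outside than inside, so each layer is nonnegative, and the affine correction is absorbed using $\mathrm{Cov}(m,x)=0$. For $v>1$ the sets $\{m>v\}$ may be unions of several pieces, so these layers must be treated separately. Here I would use that $\overline q_\e(x)(1+m(x)-\overline m)=T_\e[\overline q_\e](x)$ together with $m\ge c_m x^2$: this shows the mass beyond $x_\pm$ is penalised, and the contribution of these layers can still be made to have a favourable sign. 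Making this layer-by-layer argument close with a single centring point $c$ is the delicate point. If it fails, my fallback is to prove directly that $\overline q_\e$ is itself unimodal, using positivity of $T_\e$ and the monotonicity of $m$ on $(x_-,0)$ and $(0,x_+)$ from \eqref{As:m2}, and then to apply a Chebyshev-type rearrangement inequality.

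\textbf{Step 2: higher moments (ii).} I would argue by induction on $k$, first for even $k$ and then for odd $k$ by interpolation, testing against $|x-M_{\e,1}|^k$. Expanding $\big|\frac{(y-M_{\e,1})+(y'-M_{\e,1})}{2}+z\big|^k$ bounds $\int|x-M_{\e,1}|^kT_\e[\overline q_\e]$ by
\[
2^{1-k}M^{|c|}_{\e,k}+\sum_{j<k}C_{k,j}\,\e^{k-j}M^{|c|}_{\e,j}+\text{cross products of lower moments}.
\]
By the induction hypothesis these lower-order terms are $O(\e^k)$. The selection term splits as follows: on $\{m\ge\overline m\}$ it has the good sign. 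On $\{m<\overline m\}$ we have $m<1-\delta$ by \eqref{As:q}, and \eqref{As:m2} gives $x^2\le \overline m/c_m$ there. Together with $m\le C_m x^2/2$ this yields $\overline m\le C(M^c_{\e,2}+M_{\e,1}^2)$.

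The term $M_{\e,1}$ is controlled by $\mathrm{Cov}(m,x)=0$: Taylor expansion around $0$ gives $M_{\e,1}\approx\frac{-\frac12\int m'''(\xi)(\cdot)^3\overline q_\e}{1}$, which is small in terms of $M^{|c|}_{\e,3}$ and $M^c_{\e,2}$. The bad part of the selection term is then at most
\[
\overline m\, M^{|c|}_{\e,k}\le \big(C\e^2+CM_{\e,1}^2\big)M^{|c|}_{\e,k}.
\]
For $\e$ small this is absorbed by the dissipation $(1-2^{1-k})M^{|c|}_{\e,k}$, which is uniform since $1-2^{1-k}\ge 1/2$. The constant $\delta$ is used only to confine the region $\{m<\overline m\}$ and to make the constants uniform in $\e$. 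This yields $M^{|c|}_{\e,k}\le C_k\e^k$.
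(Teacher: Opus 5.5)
\textbf{Step 1 has a gap.} Your reduction of (i) to the sign of $\mathrm{Cov}_{\overline q_\e}\big(m,(x-M_{\e,1})^2\big)$ is correct and is the paper's starting point. The layer-cake argument, however, does not establish that sign, because the recentring cannot be done layer by layer. The two quantities $\mathrm{Cov}(\mathds 1_{\{m>v\}},(x-c)^2)$ and $\mathrm{Cov}(\mathds 1_{\{m>v\}},(x-c(v))^2)$ differ by $2(c(v)-c)\,\mathrm{Cov}(\mathds 1_{\{m>v\}},x)$. The identity $\mathrm{Cov}(m,x)=0$ only gives $\int_0^\infty \mathrm{Cov}(\mathds 1_{\{m>v\}},x)\,dv=0$; it does not make the $c(v)$-weighted integral vanish. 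The layers $v>1$ are also left unresolved. The fallback (unimodality of $\overline q_\e$ plus a rearrangement inequality) is not proved and does not obviously give the sign, since $m$ and $(x-c)^2$ are not comonotone.

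The missing observation is that only the single level $v=\overline m:=\int m\,\overline q_\e$ matters. Write $\mathrm{Cov}(m,f)=\int(m-\overline m)f\,\overline q_\e$. Since $\int(m-\overline m)\overline q_\e=0$ and $\int(m-\overline m)x\,\overline q_\e=0$, you may subtract any affine function from $f$. Evaluating \eqref{eq_qepsform} at $x=0$ gives $\overline m<1$, so by \eqref{monotony-m} the set $\{m\le\overline m\}$ is an interval $[a_-,a_+]\ni 0$. Then (with the obvious modification if $a_\pm$ is infinite)
\[
\mathrm{Cov}\big(m,(x-M_{\e,1})^2\big)=\int_\R (m(x)-\overline m)(x-a_-)(x-a_+)\,\overline q_\e(x)\,dx\ \ge 0,
\]
because both factors are $\le 0$ on $[a_-,a_+]$ and $\ge 0$ outside. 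Your midpoint idea is therefore right, but only at this one level: $c=(a_-+a_+)/2$, with the constant $((a_+-a_-)/2)^2$ subtracted. This is what the paper's comparison between $\overline q_\e$ and $p_\e=m\overline q_\e/\overline m$ encodes (same mass, same mean, $p_\e-\overline q_\e\le 0$ on $[a_-,a_+]$ and $\ge 0$ outside). Neither \eqref{As:m2} nor unimodality is needed.

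\textbf{Step 2 has a more serious problem.} The proposition assumes only \eqref{As:m} and \eqref{As:q}, so $c_mx^2\le m\le C_mx^2/2$ and any control of $m'''$ are unavailable. Your control of $M_{\e,1}$ is also wrong. Even granting \eqref{As:m2}, expanding $\int(x-M_{\e,1})m\,\overline q_\e=0$ around $M_{\e,1}$ gives $m'(M_{\e,1})M^c_{\e,2}=O(M^{|c|}_{\e,3})$. You must therefore divide by $M^c_{\e,2}$, and you need $M^{|c|}_{\e,3}$, hence $M^c_{\e,4}$, beforehand, which is circular. In the paper, smallness of $M_{\e,1}$ (Lemma \ref{lem:M1e}) is a consequence of (ii), not an input to it. Without smallness of $\overline m$, your induction already fails at $k=4$: the reproduction term gives $\tfrac18 M^c_{\e,4}+O(\e^4)\ge(1-\overline m)M^c_{\e,4}$, which says nothing when $\overline m\ge 7/8$, and \eqref{As:q} allows this.

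Smallness of $\overline m$ is not actually needed. Since $m\ge 0$, drop $\int(x-M_{\e,1})^{2l}m\,\overline q_\e\ge 0$ and keep $(1-\overline m)M^c_{\e,2l}\ge\delta M^c_{\e,2l}$. Then work directly at one large even order $2l$ with $2^{1-2l}<\delta/2$. The remaining terms of the reproduction integral are products of lower central moments and powers of $\e$. Bound them by interpolation between $M^c_{\e,2}\le\e^2$ (from (i)) and $M^c_{\e,2l}$, absorb them with Young's inequality to get $M^c_{\e,2l}\le C\e^{2l}$, and recover every $2\le k\le 2l$ by H\"older. That is the role of $\delta$, rather than confining the region $\{m<\overline m\}$.
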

\begin{proof}
 We start with the proof of item (i). {This proof relies strongly on  Assumption \fer{As:m}, that implies that $0$ is the  only admissible extremum point satisfying \fer{as:m-extremum}.}\\ 
We multiply \fer{eq_qepsform} by $(x-M_{\e,1})$ and integrate to obtain
\beq
\label{eq-xmq}
 \int_\R xm(x)\overline q_\e(x)dx= M_{\e,1}\int_\R m(x)\overline q_\e(x)dx .
\eeq
We next multiply \fer{eq_qepsform} by $(x-M_{\e,1})^2$ and integrate to obtain, using the equality above,
\[
\begin{aligned}
\left(\int_\R(x-M_{\e,1})^2 T_\e[\overline q_\e](x)-M_{\e,2}^c\right) &=\int_\R (x-M_{\e,1})^2m(x)\overline q_\e(x)dx\\
&-\left(\int m(x)\overline q_\e(x)dx\right)\, \left(\int (x-M_{\e,1})^2\overline q_\e(x)dx\right)
\\
&= \int_\R x^2m(x)\overline q_\e(x)dx-\left(\int m(x)\overline q_\e(x)dx\right)\, \left(\int x^2\overline q_\e(x)dx\right)\\
&-2M_{\e,1}\int xm(x)\overline q_\e(x)dx+2M_{\e,1}^2\int m(x)\overline q_\e(x)dx\\
&= \int_\R x^2m(x)\overline q_\e(x)dx-\left(\int_\R m(x)\overline q_\e(x)dx\right)\, \left(\int_\R x^2\overline q_\e(x)dx\right).
\end{aligned}
\]
One can verify that
\[
\int_\R(x-M_{\e,1})^2 T_\e[\overline q_\e](x)dx=\f{\e^2}{2}+{\frac{M_{\e,2}^c}{2}},
\]
and hence
{\beq
\label{eq:M2c-II}
\begin{array}{rl}
\left(\f{\e^2}{2}-\f{M_{\e,2}^c}{2}\right) &=\int_\R (x-M_{\e,1})^2m(x)\overline q_\e(x)dx
 -\left(\int m(x)\overline q_\e(x)dx\right)\, \left(\int (x-M_{\e,1})^2\overline q_\e(x)dx\right)
\\&= \int_\R x^2m(x)\overline q_\e(x)dx-\left(\int_\R m(x)\overline q_\e(x)dx\right)\, \left(\int_\R x^2\overline q_\e(x)dx\right).
\end{array}
\eeq}
 In order to prove {\fer{est-Me2}} it is hence enough to show that  the right-hand side is positive, that is:
$$
\left(\int_\R m(x)\overline q_\e(x)dx\right)\, \left(\int_\R x^2\overline q_\e(x)dx\right) \leq \int_\R x^2m(x)\overline q_\e(x)dx.
$$
{This is where Assumption \fer{As:m} plays an important role. To prove the inequality above,} we introduce the following probability density (recall that $m \geq 0$ in this section):
$$
p_\e(x)=\dfrac{m(x)\overline{q}_\e(x)}{\int_\R m(y)\overline q_\e(y)dy}, \quad \forall \, x \in \mathbb R.
$$
We have,  because of \eqref{eq-xmq} regarding the second identity:
\beq
\label{eq:M1pqII}
\int_\R p_\e(x)dx=\int_\R \overline q_\e(x)dx=1,\qquad \int_\R xp_\e(x)dx=\int_\R x\overline q_\e(x)dx=M_{\e,1}.
\eeq
We also notice that
$$
\f{p_\e(x)}{\overline q_\e(x)}=\frac{m(x)}{\int_\R m(y)\overline q_\e(y)dy}.
$$
 Inequality \fer{bound-int-mq} entails that we can set $ v := \int_\R m(y)\overline q_\e(y)dy \in (0,1)$ into  \fer{monotony-m}. We obtain that there exist $x_\ast,\,x^\ast\in (0,+\infty]$ such that
\beq
\label{pqx*}
p_\e(x)\leq \overline q_\e(x),\quad \text{in $[-x_\ast,x^\ast]$}\quad \text{and}\quad\overline  q_\e(x)\leq p_\e(x),\quad \text{in $(-\infty,-x_\ast]\cup [x^\ast,+\infty$).}
\eeq
We show that  these properties imply that
$$
\int_\R x^2\overline q_\e(x)dx\leq \int_\R x^2p_\e(x)dx.
$$
Let's suppose the contrary. Then, we can assume, without loss of generality up to  exchanging the role of $\R^+$ by $\R^-$ in the following arguments, that
$$
\int_{\R^+} x^2p_\e(x)dx< \int_{\R^+} x^2\overline q_\e(x)dx.
$$
We rewrite this inequality as below
$$
 \int_{x^\ast}^{+\infty} x^2\big(p_\e(x)-\overline q_\e(x) \big)dx< \int_{0}^{x^\ast} x^2\big(\overline  q_\e(x)-p_\e(x)\big)dx.
$$
Using \fer{pqx*} we obtain that the terms in the integrals above are nonnegative.   Let's suppose that $x^\ast\neq +\infty$.   We deduce that
$$
x^\ast \int_{x^\ast}^{+\infty} x \big(p_\e(x)-\overline q_\e(x)\big)dx\leq x^\ast\int_{0}^{x^\ast} x\big(\overline q_\e(x)- p_\e(x)\big)dx,
$$
$$
(x^\ast)^2 \int_{x^\ast}^{+\infty}  \big(p_\e(x)-\overline q_\e(x)\big)dx\leq (x^\ast)^2\int_{0}^{x^\ast} \big(\overline q_\e(x)- p_\e(x)\big)dx.
$$
 Since $x^* >0,$ the two inequalities entail respectively:
$$
\int_{\R^+} xp_\e(x)dx< \int_{\R^+} x\overline q_\e(x)dx,\qquad \int_{\R^+} p_\e(x)dx< \int_{\R^+} \overline q_\e(x)dx.
$$
Note that these inequalities hold also trivially if $x^\ast=+\infty$. We next use   \fer{eq:M1pqII} to find that
\beq
\label{eq-pqx-II}
\int_{\R^+}xp_\e(-x)dx<\int_{\R^+} x\overline q_\e(-x)dx,\qquad \int_{\R^+} p_\e(-x)dx> \int_{\R^+} \overline q_\e(-x)dx.
\eeq
We recall that
$$
p_\e(-x)\leq \overline q_\e(-x),\quad \text{for all $0\leq x\leq x_\ast$},\quad\overline q_\e(-x)<p_\e(-x),\quad \text{for all $x> x_\ast$}.
$$
Note that in view of \fer{eq-pqx-II},   $x_\ast$ may not be equal to $+\infty$ nor equal to $0$.  We then obtain that
$$
\int_{0}^{x_\ast} xp_\e(-x)dx+\int_{x_\ast}^{+\infty} xp_\e(-x)dx< \int_{0}^{x_\ast} x\overline q_\e(-x)dx+\int_{x_\ast}^{+\infty} x\overline q_\e(-x)dx
$$
that we rewrite as below
$$
\int_{x_\ast}^{+\infty} x\big(p_\e(-x)-\overline q_\e(-x)\big)dx<  \int_{0}^{x_\ast} x\big(\overline q_\e(-x)-p_\e(-x)\big)dx.
$$
Notice again that similarly to above the terms in the integrals above are nonnegative. We deduce that
$$
 x_\ast\int_{x_\ast}^{+\infty}\big(p_\e(-x)-\overline{q}_\e(-x)\big)dx< x_\ast\int_{0}^{x_\ast} \big(\overline q_\e(-x)-p_\e(-x)\big)dx.
$$
We deduce that
$$
\int_{\R^+} p_\e(-x)dx <\int_{\R^+} \overline q_\e(-x)dx.
$$
This is in contradiction with the second equation in \fer{eq-pqx-II}. This conclude the proof of (i).

\medskip

 We turn now to the proof of (ii).  This proof relies on the analysis of the equation on $M_{\e,2l}^c$, which includes a dissipative term.     Assumption \fer{As:q} and the  fact that $m$ is nonnegative, together with a fine analysis of the terms coming from the reproduction term lead to the result.\\
  Assumption \fer{As:q} implies that there exists $l_0>0$ such that 
\[
\int m(x)\overline{q}_\e(x)dx<1 -\f{1}{2^{2l_0+1}}-\f\delta 2.
\]
 We first prove that, for all $l\geq \max(l_0,2)+1$,
$$
M_{\e,2l}^c\leq C_{2l}\e^{2l}.
$$
To this end, we  multiply  \fer{eq_qepsform} by $(x-M_{\e,1})^{2l}$ and integrate with respect to $x$ to obtain that 
$$
\begin{aligned}
\int_\R\int_\R\int_\R (x-M_{\e,1})^{2l}\Gamma_\e\big(x-\f{y_1+y_2}{2}\big)q_\e(y_1)q_\e(y_2)dy_1dy_2dz -{M_{\e,2l}^c}\\
=\int_\R (x-M_{\e,1})^{2l}m(x)q_\e(x)dx-\left(\int_\R m(x)q_\e(x)dx\right) M_{\e,2l}^c.
\end{aligned}
$$
We deduce, using \fer{As:m}, that
\beq
\label{eq:Mk}
\begin{aligned}
\int_\R\int_\R\int_\R (x-M_{\e,1})^{2l}\Gamma_\e\big(x-\f{y_1+y_2}{2}\big)\overline{q}_\e(y_1)\overline{q}_\e(y_2)dy_1dy_2dz &\geq \left(1 -\int_\R m(x)\overline{q}_\e(x)dx\right)M_{\e,2l}^c\\
&\geq \left(\f{1}{2^{2l_0+1}}+\f\delta2\right)M_{\e,2l}^c.
\end{aligned}
\eeq
We next provide an approximation of the l.h.s. $A_1$ of the inequality above.
Let's denote by $\e^{2i}\widetilde \sigma_{2i}$, the $2i$'s moment of the normal distribution $\Gamma_\e$. We then obtain by expanding little by little:
\[
\begin{aligned}
A_1&:=\int_\R\int_\R\int_\R (x-M_{\e,1})^{2l}\Gamma_\e\big(x-\f{y_1+y_2}{2}\big)q_\e(y_1)q_\e(y_2)dy_1dy_2dx\\
&=\int_\R\int_\R\int_\R \Big(x-\f{y_1+y_2}{2}+ \f{y_1+y_2}{2}-M_{\e,1}\Big)^{2l}\Gamma_\e\big(x-\f{y_1+y_2}{2}\big)q_\e(y_1)q_\e(y_2)dy_1dy_2dx\\
 &= \sum_{i=0}^{l}   \begin{pmatrix}2l  \\ 2i  \end{pmatrix}\f{\e^{2i}\widetilde \sigma_{2i}}{2^{2l -2i}}\int_\R\int_\R  \Big(  y_1-M_{\e,1}+ y_2-M_{\e,2}\Big)^{2l-2i}  q_\e(y_1)q_\e(y_2)dy_1dy_2 \\
 & = \sum_{i=0}^{l}   \begin{pmatrix} 2l \\ 2i  \end{pmatrix}\f{\e^{2i}\widetilde{\sigma}_{2i}}{2^{2l -2i}}\sum_{k=0}^{2l -2i}
   \begin{pmatrix}2l -2i \\ k  \end{pmatrix} M_{\e,k}^c M_{\e,2l -2i-k}^c.\\
\end{aligned}
\]
  We gather in this expression, the smaller and larger moments. For this, we note that $M_{\e,0}^c=1$, $M_{\e,1}^c=0$  and that $\widetilde{\sigma}_0=1$ and we consider that
\begin{itemize}
\item for $i=l:$
\[
 \begin{pmatrix} 2l \\ 2i  \end{pmatrix}\f{\e^{2i}\widetilde{\sigma}_{2i}}{2^{2l -2i}}\sum_{k=0}^{2l -2i}
   \begin{pmatrix}2l -2i \\ k  \end{pmatrix} M_{\e,k}^c M_{\e,2l -2i-k}^c = \e^{2l}\tilde{\sigma}_{2l},
\]
\item for $i=l-1$ 
\[
 \begin{pmatrix} 2l \\ 2i  \end{pmatrix}\f{\e^{2i}\widetilde{\sigma}_{2i}}{2^{2l -2i}}\sum_{k=0}^{2l -2i}
   \begin{pmatrix}2l -2i \\ k  \end{pmatrix} M_{\e,k}^c M_{\e,2l -2i-k}^c =  
   \begin{pmatrix} 2l \\ 2l-2  \end{pmatrix} \e^{2l-2}\widetilde{\sigma}_{2l-2}\f{M_{\e,2}^c}{2},
\]
\item in the terms $i=0,...,l-2$,
\end{itemize}
\[
 \sum_{k=0,1,2l-2i-1,2l-2i}  \begin{pmatrix}2l -2i \\ k  \end{pmatrix} M_{\e,k}^c M_{\e,2l -2i-k}^c
    = 2M_{\e,2l -2i}^c.
\]
We infer that
\beq \label{eq-A1}
\begin{aligned}
A_1 &= \e^{2l}\widetilde{\sigma}_{2l}+ \begin{pmatrix} 2l \\ 2l-2  \end{pmatrix}\e^{2l-2}\widetilde{\sigma}_{2l-2}\f{M_{\e,2}^c}{2}\\
&+\sum_{i=0}^{l-2}   \begin{pmatrix} 2l \\ 2i  \end{pmatrix}\e^{2i}\widetilde{\sigma}_{2i}\f{1}{2^{2l -2i}}\sum_{k=2}^{2l -2i-2}
   \begin{pmatrix}2l -2i \\ k  \end{pmatrix} M_{\e,k}^c M_{\e,2l -2i-k}^c\\
   &+\f{1}{2^{2l-1}}M_{\e,2l}^c+\sum_{i=1}^{l-2}   \begin{pmatrix} 2l \\ 2i  \end{pmatrix}\f{\e^{2i}\widetilde{\sigma}_{2i}}{2^{2l -2i-1}} 
   M_{\e,2l -2i}^c.
\end{aligned}
\eeq
We next notice that, using an interpolation argument, for all $j$ such that $2\leq j\leq 2l-2$, we have
$$
M_{\e,j}^c\leq \big( M_{\e,2}^c \big)^{\f{2l-j}{2l-2}}  \big( M_{\e,2l}^c \big)^{\f{ j-2}{2l-2}}.
$$
We deduce, thanks to \fer{est-Me2}, that
$$
M_{\e,j}^c\leq \e^{\f{2l-j}{l-1}}  \big( M_{\e,2l}^c \big)^{\f{ j-2}{2l-2}}.
$$
We use this inequality in \fer{eq-A1} to obtain that
$$
\begin{aligned}
A_1&\leq \e^{2l}\widetilde{\sigma}_{2l}+ \begin{pmatrix} 2l \\ 2l-2  \end{pmatrix}\f{\widetilde{\sigma}_{2l-2}\e^{2l }}{2}\\
&+\sum_{i=0}^{l-2}   \begin{pmatrix} 2l \\ 2i  \end{pmatrix}\f{\e^{2i}\widetilde{\sigma}_{2i}}{2^{2l -2i}}\sum_{k=2}^{2l -2i-2}
   \begin{pmatrix}2l -2i \\ k  \end{pmatrix}  \e^{\f{2l+2i}{l-1}}  \big( M_{\e,2l}^c \big)^{\f{ 2l-2i-4}{2l-2}}  \\
   &+\f{1}{2^{2l-1}}M_{\e,2l}^c+\sum_{i=1}^{l-2}   \begin{pmatrix} 2l \\ 2i  \end{pmatrix} \e^{2i}\widetilde{\sigma}_{2i}\f{1}{2^{2l -2i-1}} 
  \e^{\f{2i}{l-1}} \big( M_{\e,2l}^c \big)^{\f{ l-i-1}{l-1}}\\
 & \leq \e^{2l}\left( \widetilde{\sigma}_{2l}+  \f{l(2l-1)}{2}\widetilde{\sigma}_{2l-2}\right)+\f{1}{2^{2l-1}}M_{\e,2l}^c\\
&+\sum_{i=0}^{l-2}   \begin{pmatrix} 2l \\ 2i  \end{pmatrix} \f{\widetilde{\sigma}_{2i}}{2^{2l -2i}} \left(\sum_{k=2}^{2l -2i-2}
   \begin{pmatrix}2l -2i \\ k  \end{pmatrix}  \right) \e^{\f{2l+2il}{l-1}}  \big( M_{\e,2l}^c \big)^{\f{ 2l-2i-4}{2l-2}}  \\
   &+\sum_{i=1}^{l-2}   \begin{pmatrix} 2l \\ 2i  \end{pmatrix} \f{\widetilde{\sigma}_{2i}}{2^{2l -2i-1}} 
  \e^{\f{2il}{l-1}} \big( M_{\e,2l}^c \big)^{\f{ l-i-1}{l-1}}.
   \end{aligned}
$$
  We next remark that $(2l-2i-4)/(2l-2) \in (0,1)$ for all $i \in \{0,l-2\}$ (resp. $(l-i-1)/l-1 \in (0,1)$ for all $i \in \{1,\ldots,l-2$), hence we may then the Young's inequality in the last two lines above to find  that, for arbitrary $c_0 >0,$ there is a constant $\tilde{K}(l,c_0)$ depending on $l,$ the $(\widetilde{\sigma}_{2i})_{i=1,\ldots,l}$  and $c_0$ such that:
\[
A_1 \leq \e^{2l} \tilde{K}(l,c_0) + \left(\dfrac{1}{2^{2l-1}} + c_0 \right) M_{\e,2l}^{c}.
\]
%
Choosing next $c_0$ small enough we obtain  a constant $K_{2l}$ depending only on $l,$ possibly in a complicated way, such  that
\beq
\label{final-Es-A1}
  A_1\leq K_{2l}\e^{2l}+\left(\f{1}{2^{2l-1}}+\f{\delta}{4}\right)M_{\e,2l}^c.
\eeq
 Plugging \fer{final-Es-A1} in the left-hand side of \fer{eq:Mk} and using $l\geq l_0+1$ we deduce that
$$
 M_{\e,2l}^c\leq \f{4 K_{2l}}{\delta}\e^{2l}.
$$
Note that using H{\"o}lder inequality, the inequality above implies that
$$
M_{\e,k}^{|c|}\leq C_k\e^k,
$$
for all  $2 {\leq} k \leq l$ and thus for all $k \geq 2$ since $l$ is arbitrary large.
\qed

\end{proof}

%

\bigskip

%
%

We next prove the following Lemma. 
\begin{lemma}
\label{lem:M1e}
Let's assume \fer{As:m}--\fer{As:m2}--\fer{As:q}. Then, for $\e\leq \e_0$ small enough, we have
$$
 |M_{\e,2}^c-\e^2|\leq C\e^4,\qquad M_{\e,1}\leq C\e,
$$
with $C$ a constant that does not depend  on   $\e$.
\end{lemma}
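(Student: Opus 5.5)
The plan is to work throughout with the exact identity \eqref{eq:M2c-II} and the first-moment relation \eqref{eq-xmq}, Taylor expanding $m$ around the mean $M_{\e,1}$ rather than around $0$. The moment bounds \eqref{bound-Mk} of Proposition \ref{lem:moments-m-II} then control all remainders, since $|m''|\leq C_m$ by \eqref{As:m2}. The argument has three steps: a crude bound on $M_{\e,2}^c$, then the bound on $M_{\e,1}$, then the sharp bound on $M_{\e,2}^c$.

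\textbf{Step 0 (a priori bound on $M_{\e,1}$).} By \eqref{As:q} and $m(x)\geq c_m x^2$ we get $\int x^2\overline q_\e\leq 1/c_m$. Hence $|M_{\e,1}|\leq C$ and $|m'(M_{\e,1})|\leq C$, because $m'$ has at most linear growth.

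\textbf{Step 1 (crude variance estimate).} In the first line of the right-hand side of \eqref{eq:M2c-II}, replace $m$ by $m-m(M_{\e,1})$; the constant cancels between the two terms. Write $m(x)-m(M_{\e,1})=m'(M_{\e,1})(x-M_{\e,1})+R(x)$ with $|R|\leq \frac{C_m}{2}(x-M_{\e,1})^2$. The right-hand side becomes
\[
m'(M_{\e,1})M^c_{\e,3}+O\big(M^{|c|}_{\e,4}\big)+O\big((M^c_{\e,2})^2\big),
\]
since the linear part has zero mean against $\overline q_\e$. Using Step 0 and \eqref{bound-Mk}, this gives $|M^c_{\e,2}-\e^2|\leq C\e^3$. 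In particular $M^c_{\e,2}\geq \e^2/2$ for $\e$ small.

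\textbf{Step 2 (localizing $M_{\e,1}$).} Apply the same expansion to \eqref{eq-xmq}, written as $\int (x-M_{\e,1})m(x)\overline q_\e=0$. This yields $m'(M_{\e,1})M^c_{\e,2}=O(M^{|c|}_{\e,3})=O(\e^3)$. By Step 1 we then get $|m'(M_{\e,1})|\leq C\e$.

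Next, Taylor expanding again gives $\int m\overline q_\e=m(M_{\e,1})+O(\e^2)$, so \eqref{As:q} implies $m(M_{\e,1})\leq 1-\delta/2$ for small $\e$. By \eqref{monotony-m} with $v=1-\delta/2$, together with \eqref{As:m} and the definition of $x_\pm$, the sublevel set $\{m\leq 1-\delta/2\}$ is a compact interval $[a_-,a_+]$ strictly inside $(x_-,x_+)$. On this interval $m'$ vanishes only at $0$, by \eqref{As:m2}, and $m''(0)=1$.

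The key claim, which I expect to be the main obstacle, is that there exists $c>0$ with $|m'(x)|\geq c|x|$ on $[a_-,a_+]$. My first approach: near $0$ the claim follows from $m'(x)=x(1+o(1))$. Away from $0$ it follows from compactness and the nonvanishing of the continuous function $m'$. The delicate point is making sure $a_\pm$ stay uniformly away from $x_\pm$, where $m'$ could degenerate; this is exactly what using $v=1-\delta/2<1$ should guarantee. Granting the claim, we obtain $|M_{\e,1}|\leq C\e$.

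\textbf{Step 3 (sharp variance estimate).} Return to the expansion in Step 1. Now $|m'(M_{\e,1})|\leq C\e$ by Step 2 and $|M^c_{\e,3}|\leq C\e^3$ by \eqref{bound-Mk}, so the first term is $O(\e^4)$. The remaining terms are $O(\e^4)$ by \eqref{bound-Mk}. Inserting this into \eqref{eq:M2c-II} gives $|M^c_{\e,2}-\e^2|\leq C\e^4$, which completes the proof.
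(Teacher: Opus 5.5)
Your proposal is correct and follows the paper's proof step for step. It obtains the crude bound $|M_{\e,2}^c-\e^2|\le C\e^3$ from \eqref{eq:M2c-II} by Taylor expanding around $M_{\e,1}$, deduces $|m'(M_{\e,1})|\le C\e$ from \eqref{eq-xmq}, localizes $M_{\e,1}$ via $m(M_{\e,1})\le 1-\delta/2$, and closes with the sharp variance bound. You also make explicit two points the paper leaves implicit: the a priori bound on $M_{\e,1}$ coming from $m\ge c_m x^2$, and the lower bound $|m'(x)|\ge c|x|$ on the $\e$-independent compact set $\{m\le 1-\delta/2\}\subset(x_-,x_+)$. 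That lower bound does hold, since $x\mapsto m'(x)/x$ (equal to $m''(0)=1$ at $0$) is continuous and nonvanishing there.
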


\begin{proof}
{In order to obtain the   precise estimate on $M_{\e,2}^c$ in Lemma \ref{lem:M1e}, we use again \fer{eq:M2c-II}, but this time we provide a more precise approximation of the r.h.s using a Taylor expansion on $m$ and the previous estimates \fer{est-Me2}--\fer{bound-Mk}.} We recall that \fer{eq:M2c-II} reads
$$
\left(\f{\e^2}{2}-\f{M_{\e,2}^c}{2}\right)=\int_\R (x-M_{\e,1})^2m(x)\overline{q}_\e(x)dx-M_{\e,2}^c\int_\R m(x)\overline{q}_\e(x)dx.
$$
We next use a Taylor expansion on $m$ as follows
\beq
\label{T1}
m(x)=m(X)+(x-X)m'(X)+(x-X)^2r_1^m[X](x), 
\eeq
with
$$
r_1^m[X](x)=\int_0^1 (1-\sigma) m''(X+\sigma(x-X))d\sigma \text{ satisfying } |r_1^m[X](x)|\leq C_m , \quad 
\forall \, x \in \mathbb R,
$$
by \eqref{As:m2}. This leads to
\[
\begin{aligned}
\int_\R (x-M_{\e,1})^2m(x)\overline{q}_\e(x)dx&=m(M_{\e,1})\int_\R (x-M_{\e,1})^2 \overline{q}_\e(x)dx+m'(M_{\e,1})\int_\R (x-M_{\e,1})^3 \overline{q}_{\e}(x)dx\\
&+\int_\R (x-M_{\e,1})^4 r_1^m[M_{\e,1}](x)dx\\
&=m(M_{\e,1})M_{\e,2}^c+m'(M_{\e,1})M_{\e,3}^c+\int_\R (x-M_{\e,1})^4 r_1^m[M_{\e,1}](x) \overline{q}_{\e}(x)dx.
\end{aligned}
\]
We then obtain thanks to \fer{bound-Mk} and Assumption \fer{As:m2}    that
$$
\left|\int_\R (x-M_{\e,1})^2m(x)\overline{q}_{\e}(x)dx-m(M_{\e,1})M_{\e,2}^c-m'(M_{\e,1})M_{\e,3}^c\right|\leq C\e^4.
$$
Similarly, we compute using \fer{T1}
$$
\int_\R m(x)\overline{q}_{\e}(x)dx=m(M_{\e,1})+\int_\R (x-M_{\e,1})^2 r_1^m[M_{\e,1}](x)\overline{q}_{\e}(x)dx,
$$
and again thanks to \fer{bound-Mk} and Assumption \fer{As:m2}   we obtain that
$$
\left|M_{\e,2}^c \int_\R m(x){\overline q}_\e(x)dx-m(M_{\e,1})M_{\e,2}^c\right|\leq C\e^4.
$$
 Introducing these inequalities in \eqref{eq:M2c-II}  and applying \eqref{bound-Mk} with $k=3$, we  conclude that 
\beq
\label{eq:M2c-correct}
\left| \f{\e^2}{2}-\f{M_{\e,2}^c}{2}\right| \leq {|m'(M_{\e,1})M_{\e,3}^c|+C\e^4 \leq C  \e^3}. 
\eeq
We next rewrite the equality \fer{eq-xmq} as below
$$
\int_\R (x-M_{\e,1})m(x) {\overline q}_\e(x)dx=0.
$$
Using again \fer{T1} we obtain that 
$$
m'(M_{\e,1})M_{\e,2}^c+\int_\R (x-M_{\e,1})^3 r_1^m[M_{\e,1}](x){\overline{q}}_{\e}(x)dx=0.
$$
We deduce again that
$$
 \left| m'(M_{\e,1})M_{\e,2}^c\right|\leq C\e^3.
$$
Since $|M_{\e,2}^c-\e^2| \leq C\e^3$, we find that  for $\e$ sufficiently small
\beq
\label{est-mM1}
 |m'(M_{\e,1})|\leq C\e.
\eeq
We next notice that
$$
\int_\R m(x){\overline q}_\e(x)dx=m(M_{\e,1})+O(\e^2),
$$
which implies that for $\e$ small enough
$$
m(M_{\e,1})<1 - \delta/2 .
$$
This inequality together with \fer{est-mM1} and Assumption \fer{As:m2} implies that
$$
|M_{\e,1}| \leq C\e.
$$
We also deduce using \fer{eq:M2c-correct} that
$$
|M_{\e,2}^c -\e^2| \leq C \e^4.
$$
\qed
\end{proof}

\subsection{The proof of Theorem \ref{thm:concentration}--(i).}  
We divide the proof in three parts : (0) integrability properties of $\overline{q}_{\e},$ (i) the estimate on $M_{\e,k}^c$, (ii) the estimate on $|M_{\e,1}|$.

\medskip

(0) {\bf Integrability properties of $\overline{q}_{\e}.$} By definition, if $\overline{q}_{\e}$ is a solution of \eqref{eq_qepsform}, we have:
\[
\left(1 + m(x) - \int_{\mathbb R} m(y) q_{\e}(y)dy\right)\overline{q}_{\e} = T_e[\overline{q}_{\e}].
\] 
We note here that combining assumption \eqref{As:q} with \eqref{As:m2}, we have 
\[
\left(1 + m(x) - \int_{\mathbb R} m(y) q_{\e}(y)dy\right) \geq c(1+x^2).
\]
so that:
\begin{equation} \label{eq_qtemp}
\overline{q}_{\e}(x) \leq \dfrac{C}{1+x^2} |T_{\e}[q_{\e}](x)| \quad \forall \, x \in \mathbb R.
\end{equation}
At this point, we obtain our result by induction on $\ell.$ We already have the property for $\ell=1$ by assumption.  If the property holds for $l\in \mathbb N,$ then we may apply Proposition \ref{prop_contT} to obtain that $T_{\e}[\overline{q}_{\e}] \in L^1((1+x^{2})^{l} dx).$ Plugging in \eqref{eq_qtemp} entails that $\overline{q}_\e \in L^{1}((1+x^2)^{(l+1)}dx).$  

\medskip

(i) {\bf The estimate on $M_{\e,k}^c$.} We prove the estimate on $M_{\e,k}^c$ by {induction}. The estimate is already proved for $k=2$. We assume that the estimate holds for all $k\leq k_0-1$. We prove it for $k=k_0$  following similar arguments as in the proof of Lemma \ref{lem:M1e} on the estimate on $M_{\e,2}^c$.

 We multiply \fer{eq_qepsform} by $(x-M_{\e,1})^k$ and integrate to obtain
\beq
\label{eq:Mek}
\begin{aligned}
\int_\R\int_\R\int_\R (x-M_{\e,1})^{k}\Gamma_\e\big(x-\f{y_1+y_2}{2}\big){\overline q}_\e(y_1){\overline q}_\e(y_2)dy_1dy_2dz - M_{\e,2l}^c\\
=\int_\R (x-M_{\e,1})^{k}m(x){\overline q}_\e(x)dx-\left(\int_\R m(x){\overline q}_\e(x)dx\right) M_{\e,k}^c.
\end{aligned}
\eeq
Let's first estimate the right hand side. Using \fer{T1}, evaluated at $X=M_{\e,1}$, we write
$$
\int_\R (x-M_{\e,1})^{k}m(x){\overline q}_\e(x)dx=m(M_{\e,1})M_{\e,k}^c+m'(M_{\e,1})M_{\e,k+1}^c+
\int_\R (x-M_{\e,1})^{k+2}r_1^m[M_{\e,1}](x){\overline q}_\e(x)dx,
$$
and hence, since $|M_{\e,1}|\leq C\e$ and $|M_{\e,j}^c| \leq C_j\e^j$,
$$
\left|\int_\R (x-M_{\e,1})^{k}m(x){\overline q}_\e(x)dx-m(M_{\e,1})M_{\e,k}^c\right|\leq D_k\e^{k+2}.
$$
We obtain similarly
$$
\int_\R  m(x){\overline q}_\e(x)dx=m(M_{\e,1})+m'(M_{\e,1})M_{\e,1}+
\int_\R (x-M_{\e,1})^{2}r_1^m[M_{\e,1}](x){\overline q}_\e(x)dx,
$$
and hence 
$$
\left|\int_\R  m(x){\overline q}_\e(x)dx- m(M_{\e,1})\right|\leq D_0\e^2.
$$
We deduce that, up to changing the constant $D_k$,
\beq
\label{est-lhs}
\left|\int_\R (x-M_{\e,1})^{k}m(x){\overline q}_\e(x)dx-(\int_\R m(x){\overline q}_\e(x)dx) M_{\e,k}^c\right|\leq D_k\e^{k+2}.
\eeq
Recall that $\sigma_k$ is the $k$-th order moment of $G$ and $\e^k\widetilde \sigma_k$ is the $k$-th order moment of $\Gamma_\e$.
 Note that
\beq
\label{sigma-vs-tilde}
 \sigma_k=2^{k/2}\widetilde\sigma_k=
\begin{cases}
{\frac{k!}{2^{k/2}(k/2)!}} & \text{for $k$ even},\\
0&\text{for $k$ odd}.
\end{cases}
\eeq
For the l.h.s. of \eqref{eq:Mek}, we compute
$$
\begin{aligned}
&\int_\R\int_\R\int_\R (x-M_{\e,1})^{k_0}\Gamma_\e\big(x-\f{y_1+y_2}{2}\big){\overline q}_\e(y_1){\overline q}_\e(y_2)dy_1dy_2dx\\
&=\int_\R\int_\R\int_\R \left(x-\f{y_1+y_2}{2} + \f{y_1+y_2}{2}-M_{\e,1}\right)^{k_0}\Gamma_\e\left(x-\f{y_1+y_2}{2}\right){\overline q}_\e(y_1){\overline q}_\e(y_2)dy_1dy_2dx\\
&=\sum_{i=0}^{k_0}  \begin{pmatrix} k_0 \\ i  \end{pmatrix}  \f{\e^i\widetilde{\sigma_i}}{2^{k_0-i}}\sum_{j=0}^{k_0-i} \begin{pmatrix} k_0-i \\ j \end{pmatrix}  
M_{\e,j}^cM_{\e,k_0-i-j}^c.
\end{aligned}
$$
We next use the assumption of induction to deduce that
$$
\begin{aligned}
& \int_\R\int_\R\int_\R (x-M_{\e,1})^{k_0}\Gamma_\e\big(x-\f{y_1+y_2}{2}\big)\overline{q}_\e(y_1)\overline{q}_\e(y_2)dy_1dy_2dx\\
&=\f{1}{2^{k_0-1}}M_{\e,k_0}^c+\e^{k_0}\sum_{i=1}^{k_0}  \begin{pmatrix} k_0 \\ i  \end{pmatrix}   \widetilde{\sigma_i}\f{1}{2^{k_0-i}}\sum_{j=0}^{k_0-i} \begin{pmatrix} k_0-i \\ j \end{pmatrix}  
  \sigma_j   \sigma_{k_0-i-j}
  \\
& + \e^{k_0} \f{1}{2^{k_0}}\widetilde \sigma_0\sum_{j=1}^{k_0-1} \begin{pmatrix} k_0 \\ j \end{pmatrix}  
  \sigma_j   \sigma_{k_0-j}
  +O(\e^{k_0+2}).
\end{aligned}
$$
 In these sums, we note that $j$ and $k_0-j$ (resp. $i,j$ and $k_0-i-j$) cannot be simultenaously even if $k_0$ is odd.  As a direct consequence, since the odd central moments of a Normal distribution vanish, if $k_0$ is odd we obtain that
$$
\int_\R\int_\R\int_\R (x-M_{\e,1})^{k_0}\Gamma_\e\big(x-\f{y_1+y_2}{2}\big)\overline{q}_\e(y_1)\overline{q}_\e(y_2)dy_1dy_2dx=\f{1}{2^{k_0-1}}M_{\e,k_0}^c+O(\e^{k_0+2}),
$$
and hence 
$$
M_{\e,k_0}^c=O(\e^{k_0+2}). 
$$
Let's now assume that $k_0$ is even and hence there exists $l_0$ such that $k_0=2l_0$. We have, using again the equality above, \fer{sigma-vs-tilde} and the fact that the odd central moments of a Normal distribution vanish,
$$
\begin{aligned}
&\int_\R\int_\R\int_\R (x-M_{\e,1})^{2l_0}\Gamma_\e\big(x-\f{y_1+y_2}{2}\big)\overline{q}_\e(y_1)\overline{q}_\e(y_2)dy_1dy_2dx-\f{1}{2^{2l_0-1}}M_{\e,2l_0}^c\\
&=\e^{2l_0}\f{1}{2^{2l_0}}\sum_{i=1}^{l_0}  \begin{pmatrix} 2l_0 \\ 2i  \end{pmatrix}  {{\sigma}_{2i}2^i} \sum_{j=0}^{l_0-i } \begin{pmatrix} 2l_0-2i \\ 2j \end{pmatrix}  
 \sigma_{2j}  \sigma_{2l_0-2i-2j} \\
&+\e^{2l_0} \f{1}{2^{2l_0}}    {\sigma}_{0} \sum_{j=1}^{l_0-1 } \begin{pmatrix} 2l_0  \\ 2j \end{pmatrix}  
  \sigma_{2j}  \sigma_{2l_0 -2j}+O(\e^{k_0+2}).
\end{aligned}
$$
We next use \fer{sigma-vs-tilde} to obtain that
$$
\begin{pmatrix} 2l_0 \\ 2i  \end{pmatrix}\begin{pmatrix} 2l_0-2i \\ 2j \end{pmatrix}    \sigma_{2i} 
  \sigma_{2j}   \sigma_{2l_0-2i-2j}=  \sigma_{2l_0}\begin{pmatrix}  l_0 \\  i  \end{pmatrix}\begin{pmatrix} l_0- i \\  j \end{pmatrix} .
$$
We deduce that
$$
\begin{aligned}
& \int_\R\int_\R\int_\R (x-M_{\e,1})^{2l_0}\Gamma_\e\big(x-\f{y_1+y_2}{2}\big)q_\e(y_1)q_\e(y_2)dy_1dy_2dx-\f{1}{2^{2l_0-1}}M_{\e,2l_0}^c\\
&=\e^{2l_0}  \sigma_{2l_0} \f{1}{2^{{2 l_0}}}\sum_{i=1}^{l_0}  \begin{pmatrix}  l_0 \\  i  \end{pmatrix} {2}^i\sum_{j=0}^{l_0-i} \begin{pmatrix} l_0-i \\ j \end{pmatrix}   \\
& +\e^{2l_0}  \sigma_{2l_0}\f{1}{2^{2l_0}}      \sum_{j=1}^{l_0-1 } \begin{pmatrix} l_0  \\ j \end{pmatrix}  
 +
 O(\e^{k_0+2})\\
& =\e^{2l_0}  \sigma_{2l_0} \f{1}{2^{ {2l_0}}}\left[{2^{l_0}(2^{l_0}-1)+2^{l_0}-2}\right]+O(\e^{k_0+2})=\e^{2l_0}  \sigma_{2l_0}(1-\f{1}{2^{2l_0-1}})+O(\e^{k_0+2}).
\end{aligned}
$$
We then combine \fer{eq:Mek}, \fer{est-lhs} and the equation above to obtain that
$$
M_{\e,k_0}^c=\e^{k_0}  \sigma_{k_0}  +O(\e^{k_0+2}). 
$$


(ii) {\bf The estimate on $|M_{\e,1}|$.} Similarly to the proof of Lemma \ref{lem:M1e} we use the following equality 
$$
\int_\R (x-M_{\e,1})m(x)\overline{q}_\e(x)dx=0.
$$
We next use a higher order Taylor expansion for $m(x)$ around $M_{\e,1}$
$$
m(x)=m(M_{\e,1})+m'(M_{\e,1})(x-M_{\e,1})+m''(M_{\e,1})(x-M_{\e,1})^2+r_2^m [M_{\e,1}](x)(x-M_{\e,1})^3.
$$
Combining the equalities above, we obtain
$$
m'(M_{\e,1})M_{\e,2}^c+m''(M_{\e,1})M_{\e,3}^c=O(\e^4).
$$
From the estimate in step (i) we deduce that
$$
m'(M_{\e,1})M_{\e,2}^c=O(\e^4).
$$
From Lemma \ref{lem:M1e} we deduce that
$$
m'(M_{\e,1})=O(\e^2),
$$
and hence 
$$
M_{\e,1}=O(\e^2). 
$$
 \qed

  \section{The proof of Theorem \ref{thm:concentration}--(ii)}
  \label{sec:conc2}

 In this section we prove Theorem \ref{thm:concentration}--(ii). Let $\overline q_\e$ be a steady solution to \fer{eq_qepsform} and $\overline N_\e(x)=\e\overline q_\e(\e x)$. We will prove that 
 \beq
 \label{est-Ne-G-L2}
\left\| \overline N_\e(x)-G(x) (1+{\widetilde M}_{\e,1}\,x) \right\|_{L^2(\R,G^{-1}(x)dx)}\leq C\e^2,
 \eeq
 which implies \fer{est-qe-Ge-L2}. To this end, we will use the expansion of $\overline N_\e$ in terms of Hermite polynomials, that is $N_\e=h_\e G$, with $h_\e\in L^2(\R,G(x)dx)$ that we can expand {as follows}
\[
h_\e(x)=\sum_{i=0}^{+\infty} \alpha_i H_i(x).
\]
Since $\alpha_0=1$ and $\alpha_1=\widetilde M_{\e,1}$, $H_0(x)=1$, $H_1(x)=x$, proving \fer{est-Ne-G-L2} is equivalent with showing
\beq
\label{est-sum-alpha}
\sum_{k=2}^{+\infty} \alpha_k^2\leq C\e^4.
\eeq

  We split the proof of Theorem \ref{thm:concentration}-(ii) into two steps. We first obtain separately informations on moments of $h_{\e}.$ We postpone the proof of Theorem \ref{thm:concentration}-(ii) to the last part of this section.

\subsection{Crude bounds on moments of $h_{\e}$}

From item (i), we already have \fer{est-Mek} on the moments of $\overline{q}_{\e}$. We translate at first this estimate in terms of the Hermite coefficients $(\alpha_{k})_{k \in \mathbb N}$ in the lemma below.
We emphasize that such estimates are interesting by themselves, but they hold for fixed $k$ only so that we will need a different analysis afterwards to complete our result.

\begin{lemma} \label{lem:alphakpointwise}
For all $k\geq 1$, there exists a constant $C_k$ such that
 $$
 |\alpha_1|\leq C_1\e, \qquad |\alpha_k|\leq C_k \e^2. 
 $$
 \end{lemma}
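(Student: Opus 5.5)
The plan is to combine Lemma \ref{link-Hermite-moments} with the moment estimates \eqref{est-Mek} of Theorem \ref{thm:concentration}-(i), after translating them to the rescaled unknown $\overline N_\e$ via \eqref{link-M-tildeM}.

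First, $\alpha_1=\widetilde M_{\e,1}=M_{\e,1}/\e$. Since $|M_{\e,1}|\leq C_1\e^2$ by \eqref{est-Mek}, this gives $|\alpha_1|\leq C_1\e$; this settles the case $k=1$.

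Second, for $k\geq 2$, \eqref{est-Mek} and \eqref{link-M-tildeM} give
\[
|\widetilde M^c_{\e,k}-\sigma_k|=\e^{-k}|M^c_{\e,k}-\e^k\sigma_k|\leq C_k\e^2 .
\]
Note that the hypothesis $\overline q_\e\in L^1((1+x^2)^\ell dx)$ for all $\ell$, from part (0), ensures that these moments are finite. To make sense of $\alpha_k$ we use the standing framework $h_\e\in L^2(G\,dx)$; $\alpha_k=(h_\e,H_k)_G$ is then simply a moment of $\overline N_\e$ against a polynomial.

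We then argue by strong induction on $k\geq 2$ using Lemma \ref{link-Hermite-moments}:
\[
\sqrt{k!}\,\alpha_k=\widetilde M^c_{\e,k}-\sigma_k-R(k).
\]
It suffices to show $|R(k)|\leq C\e^2$, assuming $|\alpha_j|\leq C_j\e^2$ for $2\leq j<k$.

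In the first sum of $R(k)$ each term carries the factor $\widetilde M_{1}^{k-l}$ with $k-l\geq 1$, and the remaining inner sum is bounded since the $\alpha_j$, $j<k$, are bounded. When $k-l\geq 2$ this factor is $O(\e^2)$ because $|\widetilde M_1|\leq C\e$. The only delicate term is $k-l=1$, which equals
\[
-k\widetilde M_1\sum_{j=0}^{k-1}\frac{(k-1)!}{(k-1-j)!\sqrt{j!}}\alpha_j\sigma_{k-1-j}.
\]
Here the $j=0$ and $j=1$ contributions combine, since $\alpha_0=1$ and $\alpha_1=\widetilde M_1$, into
\[
-k\widetilde M_1\bigl(\sigma_{k-1}+(k-1)\widetilde M_1\sigma_{k-2}\bigr).
\]
The $\widetilde M_1^2$ part is $O(\e^2)$. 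The contributions with $j\geq 2$ are $O(\e\cdot\e^2)$ by induction.

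The main obstacle is the linear term $-k\widetilde M_1\sigma_{k-1}$, which is only $O(\e)$ a priori. It must cancel against the $j=1$ term of the second sum of $R(k)$, namely $k\,\alpha_1\sigma_{k-1}=k\widetilde M_1\sigma_{k-1}$, and the plan is to verify this cancellation explicitly; it is expected, since a pure translation of $G$ has zero centered-moment shift at first order. (Also, $\sigma_{k-1}=0$ when $k$ is even, so the issue only arises for odd $k$.)

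The remaining terms of the second sum, $j=2,\dots,k-1$, are $O(\e^2)$ by the induction hypothesis. Hence $|R(k)|\leq C\e^2$ and $|\alpha_k|\leq C_k\e^2$. The constants depend on $k$, which is harmless here since $k$ is fixed.
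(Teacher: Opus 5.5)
Your proposal is correct and follows essentially the same route as the paper: you translate \eqref{est-Mek} via \eqref{link-M-tildeM} and induct on $k$ with Lemma~\ref{link-Hermite-moments}. The crux is that the only $O(\e)$ contribution to $R(k)$, namely $-k\widetilde M_{\e,1}\sigma_{k-1}$, cancels exactly against the $j=1$ term $k\alpha_1\sigma_{k-1}$ because $\alpha_1=\widetilde M_{\e,1}$ (the paper additionally computes the base case $R(2)=-\widetilde M_{\e,1}^2$ separately, which your strong induction covers automatically).
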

 \begin{proof}
  We first notice thanks to \fer{est-Mek} and \fer{link-M-tildeM} that
 \[
| \alpha_1|=|\widetilde M_{\e,1}|\leq C_1\e.
\]
 We next prove the result for $k\geq 2$   by induction and using Lemma \ref{link-Hermite-moments}.
 Note that, for $k=2$,
 $$
 \alpha_2=\f{1}{\sqrt{2}}\big(\widetilde M_{\e,2}^c-\sigma_2-R(2)\big),\qquad R(2)=\widetilde M_{\e,1}^2-2\widetilde M_{\e,1}\alpha_1=-\widetilde M_{\e,1}^2.
 $$
Using again \fer{est-Mek} and \fer{link-M-tildeM} we deduce that $|\alpha_2|\leq C_2\e^2$.
We now suppose that, for all $2\leq k\leq k_0$, $|\alpha_k| \leq C_k \e^2$. we prove the result for $k=k_0+1$. In view of Lemma \ref{link-Hermite-moments}, \fer{est-Mek} and \fer{link-M-tildeM} it is enough to prove that $|R(k_0+1)| \leq C_{k_0+1} \e^2$. 

\medskip

Since for all $2\leq k\leq k_0$, $|\alpha_k| \leq C_k \e^2$ and since $|\widetilde M_{\e,1}| \leq C_1 \e$, we deduce that
\[
 \begin{aligned}
 R(k_0+1)= &  - (k_0+1) \widetilde M_{\e,1} \left( k_0 \alpha_1\sigma_{k_0-1}+\alpha_0\sigma_{k_0}\right)
 + (k_0+1) \alpha_1\sigma_{k_0}+O(\e^2) \\
 =  & (k_0+1) (\alpha_1 - \widetilde{M}_{\e,1}) \sigma_{k_0} + O(\e^2)
\end{aligned}
\]
 where $O(\e^2)$ stands for a quantity dominated by $C_{k_0} \e^2.$
 We next use that $\alpha_1=\widetilde M_{\e,1}$, to deduce that $| R(k_0+1)| \leq C_{k_0+1} \e^2.$
 This ends the proof.
 \qed
 \end{proof}

\medskip

   To compute $\ell^2$-norm of the coefficients $(\alpha_k)_{k\in \mathbb N},$ we will need the following lemma that yields a control on the fourth moments of the solution.
 \begin{lemma}
 \label{N-L2-bound}
Assume \fer{As:m}, \fer{As:m2} and \fer{cond-q/G-bound}. Then, there exists a positive constant $A_3$ such that, for $\e$ small enough,
 \beq
 \label{bound x4NL2}
 \int_\R x^4  \f{\overline N_\e^2(x)}{G(x)}dx\leq A_3.
 \eeq
 \end{lemma}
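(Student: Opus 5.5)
We use the steady equation \eqref{eq_Nepsform_steady} as a pointwise identity that expresses $\overline N_\e$ through $T_1[\overline N_\e]$:
\[
\overline N_\e(x)=\frac{T_1[\overline N_\e](x)}{1+m(\e x)-\int_\R m(\e y)\overline N_\e(y)dy}.
\]
By \eqref{As:q} and \eqref{As:m2}, the denominator is bounded below by $\delta+c_m\e^2x^2\ge\delta$. It therefore suffices to bound $\int_\R x^4\,T_1[\overline N_\e]^2 G^{-1}\,dx$ uniformly in $\e$.

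\textbf{Step 1 (pointwise Gaussian bound on the gain term).} We rewrite \eqref{cond-q/G-bound} in the rescaled variable as $\int_\R \overline N_\e(y)e^{\delta' y^2}dy\le A_1$. This is a weighted $L^1$ bound with a fixed exponential weight. We plug it into
\[
T_1[\overline N_\e](x)=\iint \Gamma_1\Big(x-\tfrac{y_1+y_2}{2}\Big)\overline N_\e(y_1)\overline N_\e(y_2)dy_1dy_2 .
\]
We use $(y_1+y_2)^2/4\le (y_1^2+y_2^2)/2$ to get
\[
e^{\delta' (y_1+y_2)^2/2}\le e^{\delta' y_1^2}e^{\delta' y_2^2}.
\]
Setting $z=(y_1+y_2)/2$, we have $\Gamma_1(x-z)=\pi^{-1/2}e^{-(x-z)^2}$ and $e^{-(x-z)^2}e^{-2\delta' z^2}\le e^{-\gamma x^2}$ with $\gamma=2\delta'/(1+2\delta')$. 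Hence
\[
T_1[\overline N_\e](x)\le \pi^{-1/2}A_1^2\,e^{-\gamma x^2},\qquad \gamma=\frac{2\delta'}{1+2\delta'}.
\]

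\textbf{Step 2 (closing the estimate).} Step 1 gives $\overline N_\e(x)\le C\delta^{-1}e^{-\gamma x^2}$. Then
\[
x^4\,\overline N_\e^2/G\lesssim x^4e^{-(2\gamma-1/2)x^2},
\]
which is integrable only when $2\gamma>1/2$, that is $\delta'>1/6$. This is the main obstacle: \eqref{cond-q/G-bound} allows an arbitrary small $\delta'$. To handle it, we bootstrap the Gaussian decay rate. Suppose $\overline N_\e\le C_j e^{-\gamma_j x^2}$. Repeating Step 1 with this pointwise bound in place of the weighted $L^1$ bound, and using mass one on the remaining factor, the convolution structure of $T_1$ gives a new rate $\gamma_{j+1}$. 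Heuristically, averaging two independent samples halves the variance and the kernel adds $1/2$, so for small $\gamma_j$ we expect $1/\gamma_{j+1}\approx \tfrac12(1/\gamma_j)+\tfrac12$. The rates should then increase toward $1/2$, with constants $C_{j+1}$ depending only on $C_j$ and $\delta$.

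If exact rates are lost in this iteration, we instead exploit the denominator. For $|x|\ge R/\e$, the extra factor $(1+c_m\e^2x^2)^{-1}$ combined with the already-obtained moment bounds \eqref{est-Mek} gives decay. Plausibly, finitely many iterations reach some $\gamma_J>1/4$, after which $\int x^4\overline N_\e^2G^{-1}\le C\int x^4e^{-(2\gamma_J-1/2)x^2}dx=:A_3$.

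The delicate point to verify carefully is the convolution computation in this bootstrap. One must track $\iint e^{-(x-z)^2}e^{-\gamma(y_1^2+y_2^2)}$, maximizing at fixed $z$ to get $e^{-2\gamma z^2}$, and then optimize in $z$. If the limiting rate stalls below $1/4$, the estimate would fail with this approach, and we would need a finer argument, for instance using an $L^2(G^{-1})$ energy estimate via the Hermite multiplication rule \eqref{tildeT1-Hermite}.
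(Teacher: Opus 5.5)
Your outline is the paper's. You divide by the denominator and bound it below by $\delta$ via \eqref{As:q}; the paper instead uses Lemma~\ref{lem:M1e} to get a factor $2$. You then derive $T_1[\overline N_\e](x)\le \pi^{-1/2}A_1^2e^{-\gamma x^2}$ with $\gamma=2\delta'/(1+2\delta')$, note this only suffices when $\gamma>1/4$, and bootstrap the exponent otherwise. Step~1 and the threshold $\gamma>1/4$ are correct.

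The gap is that the bootstrap is the whole content of the lemma when $\delta'\le 1/6$, and you leave it as a heuristic that you yourself say might stall. So the proof is not complete. Several details in that paragraph are also off:
\begin{itemize}
\item Your formula $1/\gamma_{j+1}\approx\frac12(1/\gamma_j)+\frac12$ treats $1/\gamma$ as the variance, which is actually $1/(2\gamma)$. Its fixed point is $\gamma=1$, which contradicts the limit $1/2$ you announce.
\item If ``mass one on the remaining factor'' means bounding one factor only by $\int\overline N_\e=1$, all decay is lost. The resulting bound is a supremum over $y_2$, and the choice $y_2=2x$ shows it does not decay in $x$.
\item The fallback through the denominator and \eqref{est-Mek} cannot work. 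It yields at most polynomial gains, while the weight $G^{-1}$ grows like $e^{x^2/2}$.
\end{itemize}

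The missing step is a short Gaussian computation. Suppose $\overline N_\e(y)\le C_je^{-\gamma_jy^2}$. Use this bound on \emph{both} factors and set $z=(y_1+y_2)/2$, $w=(y_1-y_2)/2$, so that $y_1^2+y_2^2=2z^2+2w^2$ and $dy_1dy_2=2\,dz\,dw$. Then
\[
T_1[\overline N_\e](x)\le\frac{2C_j^2}{\sqrt\pi}\int_{\mathbb R}e^{-2\gamma_j w^2}\,dw\int_{\mathbb R}e^{-(x-z)^2-2\gamma_j z^2}\,dz
=\frac{2C_j^2}{\sqrt\pi}\sqrt{\frac{\pi}{2\gamma_j}}\sqrt{\frac{\pi}{1+2\gamma_j}}\;e^{-\frac{2\gamma_j}{1+2\gamma_j}x^2}.
\]
Hence $\overline N_\e\le C_{j+1}e^{-\gamma_{j+1}x^2}$ with $\gamma_{j+1}=2\gamma_j/(1+2\gamma_j)$, i.e.\ $1/\gamma_{j+1}=1/(2\gamma_j)+1$. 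The constant $C_{j+1}$ depends only on $C_j,\gamma_j,\delta$, so it is independent of $\e$.

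The map $\gamma\mapsto 2\gamma/(1+2\gamma)$ sends $(0,1/2)$ into itself, satisfies $2\gamma/(1+2\gamma)>\gamma$ there, and has only the fixed points $0$ and $1/2$. So the iterates increase to $1/2$. After a number of steps depending only on $\delta'$ you reach $\gamma_J\ge 3/8$. This gives $x^4\overline N_\e^2/G\le Cx^4e^{-x^2/4}$ uniformly in $\e$, which is the lemma.

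The paper runs the same iteration in a slightly different form. It converts the pointwise bound back into $\int\overline N_\e e^{\delta''y^2}dy\le A_1'$ for any $\delta''<\gamma$ and reapplies Step~1 verbatim. That is why it only needs positivity of $\delta\mapsto 2\delta/(1+2\delta)-\delta$ on $(0,1/2)$.
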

 
\begin{proof}
We recall that
\[
{\overline q}_\e=\frac{T_\e[\overline{q}_\e]}{1+m-\int_\R m(y)\overline{q}_\e(y)dy.}
\]
Assumption \fer{As:m2} and Lemma \ref{lem:M1e}  imply that 
\[
\int_{\R} m(y)\overline{q}_\e(y)dy\leq C_m\int_{\mathbb R} y^2\overline{q}_\e(y)dy \leq C\e^2.
\]
Therefore, for $\e$ small enough, $\int_\R m(y)\overline{q}_\e(y)dy\leq 1/2$. 
Using this property and since $m(x)\geq 0$ we obtain that, for $\e$ small enough,
\beq
\label{comp-q-T}
\overline{q}_\e\leq 2 T_\e[\overline{q}_\e],
\eeq
We deduce from {\fer{comp-q-T} and the definition of $T_{\e}$} that
\[
0 \leq \overline{q}_\e(x)\leq \f{2}{\e\sqrt{\pi}}\int\int \exp\left( -\f{(x-\f{y_1+y_2}{2})^2+{\delta'} y_1^2+{\delta'} y_2^2}{\e^2}\right)\overline{q}_\e(y_1)e^{\f{{\delta'} y_1^2}{\e^2}}\overline{q}_\e(y_2)e^{\f{{\delta'} y_2^2}{\e^2}} dy_1dy_2.
\]
One can verify that
 \[
 -x^2-\f{(y_1+y_2)^2}{4}+x(y_1+y_2)-{\delta'}(y_1^2+y_2^2)\leq -{\f{2{\delta'}}{1+2{\delta'}}}x^2 \quad 
 \forall (x,y_1,y_2) \in \mathbb R^3,
 \]
 and hence 
\beq
\label{ineq-tail}
\overline{q}_\e(x)\leq \f{2}{\e \sqrt{\pi}} e^{-\frac{\f{2{\delta'}}{1+2{\delta'}}x^2}{\e^2}}\Big( \int_{\mathbb R} q_\e(y_1)e^{\f{{\delta'} y_1^2}{\e^2}}dy_1\Big)^2\leq
\f{2{A_1^2}}{\e \sqrt{\pi}}e^{ -\frac{\f{2{\delta'}}{1+2{\delta'}}x^2}{\e^2}}.
\eeq
{If ${\delta'}\geq 1/2$ then, $\f{2{\delta'}}{1+2{\delta'}}\geq 1/2$ and hence, we deduce from the assumed integral bound   that there exists a constant $A_2$ independent of $\e$ such that we have the pointwise bound:
 \beq
 \label{tail-qe}
\overline q_\e(x)\leq \f{A_2}{\e}e^{-\f{3x^2}{8\e^2}}, \quad \forall \, x \in \mathbb R.
 \eeq
 If we are in the case ${\delta'}<1/2,$ we notice that \fer{ineq-tail} implies that for any constant ${\delta''}<\f{2{\delta'}}{1+2{\delta'}}$, there exists a constant $A_1'$ such that
\[
\int_{\mathbb R} \overline q_\e(y) e^{\f{{\delta''} y^2}{\e^2}}dy\leq A_1'.
\]
Moreover, the mapping ${\delta'} \mapsto \f{2{\delta'}}{1+2{\delta'}} - {\delta'}$ is positive on $[0,1/2]$ and vanishes in $0$ and $1/2$ only.  Consequently, we can iterate the computations above a finite number of times to obtain that there exists a constant $A_2$ for which 
 \[
\overline{q}_\e(x)\leq \f{A_2}{\e}e^{-\f{3 x^2}{8\e^2}}, \quad \forall \, x \in \mathbb R.
 \]
 }
 The inequality \fer{bound x4NL2} then follows thanks to $\overline N_\e(x)=\e\overline q_\e(\e x)$.
 \qed
\end{proof} 
 
 {
 \subsection{Main proof}
 }
 We are now ready to prove \fer{est-sum-alpha}. 
   We will   find an estimate of type 
  $$
  \sum_{k=2}^L |\alpha_k|^2\,\leq C\e^4,\qquad \text{for $\e\leq \e_0$, with $\e_0$ small enough},
  $$
and $C$ independent of $L$. We will prove this property by induction. {Since  this property is already true for $L=2$ (see Lemma \ref{lem:alphakpointwise}), w}e assume that, for $L\geq 2$,
\begin{equation} \label{eq_rec}
  \sum_{k=2}^L |\alpha_k|^2\,\leq C\e^4,\qquad \text{for $\e\leq \e_0$, with $\e_0$ small enough},
\end{equation}
  with $C$ and $\e_0$ to be chosen later. We then prove that
  $$
    \sum_{k=2}^{L+1} |\alpha_k|^2\,\leq C\e^4.
    $$
    For $k=2,\cdots L+1$, we multiply \fer{eq-alpha} by $\alpha_k$ and sum over $k$ to obtain
$$
\begin{aligned}
    \sum_{k=2}^{L+1} \left( \dfrac{1}{2^{k-1}}  - 1  +(H_0,  m_{\varepsilon} H_0)_G   \right) \alpha_k^2 
    &=\sum_{k=2}^{L+1} \alpha_k\left( \sum_{l=2}^{\infty} \alpha_l H_l ,  m_{\varepsilon}  H_k\right)_G\\
    &+\sum_{k=2}^{L+1} \alpha_km_{k}^{(\varepsilon)}-\sum_{k=2}^{L+1}\f{\sqrt{k!}\alpha_k}{2^k} \sum_{l=1}^{k-1} \dfrac{\alpha_l \alpha_{k-l}}{\sqrt{l! (k-l)!}}\\
    &-\sum_{k=2}^{L+1}\sum_{l=1}^{\infty} \alpha_k^2  \alpha_l  m_l^{(\varepsilon)}+\sum_{k=2}^{L+1} \alpha_k\left(   H_1 ,  m_{\varepsilon}  H_k\right)_G \alpha_1.
    \end{aligned}
    $$
 We control the terms separately. {On the left-hand side, we have:}
  $$
  \sum_{k=2}^{L+1} \left( \dfrac{1}{2^{k-1}}  - 1  +(H_0,  m_{\varepsilon} H_0)_G   \right) \alpha_k^2\leq   \big(-\f{1}{2}+(H_0,  m_{\varepsilon} H_0)_G  \big)\sum_{k=2}^{L+1}\alpha_k^2.
  $$
  Moreover,   thanks to Assumptions \fer{As:m}, \fer{As:m2} and \fer{T1} we have
  $$
  \int_\R m_\e(x) G(x)dx\leq C_m\e^2, 
  $$
and hence
   $$
  \sum_{k=2}^{L+1} \left( \dfrac{1}{2^{k-1}}  - 1  +(H_0,  m_{\varepsilon} H_0)_G   \right) \alpha_k^2\leq   \big(-\f{1}{2}+C_m\e^2  \big)\sum_{k=2}^{L+1}\alpha_k^2.
  $$
{We next split the right-hand side $RHS_1 + RHS_2+RHS_3 + RHS_4 + RHS_5$.
Concerning the first term, we have:}
 $$
 \begin{aligned}
| RHS_1| & \leq  \big|\sum_{k=2}^{L+1} \left( \sum_{l=2}^{\infty} \alpha_l H_l ,  m_{\varepsilon}  H_k\right)_G \alpha_k\big| \\
&= {\Big|} \int_\R m_\e(x) \left( \sum_{l=2}^{+\infty} \alpha_l H_l(x)\right)\left( \sum_{k=2}^{L+1} \alpha_k H_k(x)\right)G(x)dx{\Big|}  \\
&\leq \left( \int_\R m_\e^2(x)  \left( \sum_{l=2}^{+\infty} \alpha_l H_l(x)\right)^2G(x)dx\right)^{1/2}\left( \int_\R \left( \sum_{k=2}^{L+1} \alpha_k H_k\right)^2G(x)dx\right)^{1/2}\\
&= \left( \int_\R m_\e^2(x) \left( \sum_{l=2}^{+\infty} \alpha_l H_l(x)\right)^2G(x)dx\right)^{1/2}  \left(\sum_{k=2}^{L+1}\alpha_k^2\right)^{1/2}.
\end{aligned}
 $$
 We  notice that, using that $m''$ is uniformly bounded,
 $$
 \begin{aligned}
  \int_\R m_\e^2(x)  \left( \sum_{l=2}^{+\infty} \alpha_l H_l(x) \right)^2G(x)dx 
 & \leq C_3\e^4\int_\R x^4\left( \sum_{l=2}^{+\infty} \alpha_l H_l(x) \right)^2G(x)dx \\
 & \leq C_3\e^4\int_\R x^4\left( \f{\overline{N}_\e(x)}{G(x)}-\alpha_0H_0(x)-\alpha_1 H_1(x) \right)^2G(x)dx.
\end{aligned} 
 $$
We are now in a position to apply Lemma \ref{N-L2-bound}. We obtain that there exists a constant $B_1$ independent of $\e$ such that 
 $$
   \int_\R m_\e^2(x)  \left( \sum_{l=2}^{+\infty} \alpha_l H_l(x) \right)^2G(x)dx \leq B_1^2\e^4,
 $$
 and hence 
 $$
 |RHS_1| \leq  B_1\e^2\left(\sum_{k=2}^{L+1}\alpha_k^2\right)^{1/2}.
 $$
 We also compute, using Lemma \ref{lem_m},
 $$
 |RHS_2| \leq \left|\sum_{k=2}^{L+1} m_{k}^{(\varepsilon)}\alpha_k \right|\leq  \left(\sum_{k=2}^{L+1}\alpha_k^2\right)^{1/2}\left(\sum_{k=2}^{L+1} (m_{\varepsilon}, H_k)_G^2\right)^{1/2}\leq K_m\e^2{\left(\sum_{k=2}^{L+1}\alpha_k^2\right)^{1/2}.}
 $$
 We leave aside $RHS_3$ and we next compute
 $$
|RHS_4| ={\Big|} \sum_{k=2}^{L+1} \alpha_k^2 \, (m_\e,\sum_{l=1}^{\infty}\alpha_ l H_l)_G{\Big|} 
 =\big(\sum_{k=2}^{L+1}  \alpha_k^2\big) {\Big|} \int_\R m_\e(x)\big(\overline{N}_\e(x)-G(x))dx{\Big|} .
 $$
Using again Assumptions \fer{As:m}, \fer{As:m2} and \fer{T1} we have
 \[
  \int_\R m_\e(x)G(x)dx\leq C_m \e^2,\qquad \int_\R m_\e(x)\overline{N}_\e(x)dx\leq C_m\e^2\int_\R x^2 \overline{N}_\e(x)dx.
\]
 We deduce that there exists a constant $B_2$ such that
\[
|RHS_4| \leq B_2\e^2 \sum_{k=2}^{L+1} \alpha_k^2 .
\]
 For the last term of the r.h.s. we have
\[
 \begin{aligned}
| RHS_5| & \leq   | \alpha_1| \left|\sum_{k=2}^{L+1}  \left(   H_1 ,  m_{\varepsilon}  H_k\right)_G\alpha_k\right |\\
 &\leq   | \alpha_1|\left(\sum_{k=2}^{L+1}\alpha_k^2\right)^{1/2}\left(\sum_{k=2}^{L+1} \left(   H_1 m_{\varepsilon},    H_k\right)_G^2\right)^{1/2}\\
  &\leq  | \alpha_1|\left(\sum_{k=2}^{+\infty}\alpha_k^2\right)^{1/2} \| m_\e H_1\|_{L^2(\R,G(x)dx)}\\
  &\leq K_1C_1 \e^3 \left(\sum_{k=2}^{+\infty}\alpha_k^2\right)^{1/2},
  \end{aligned}
\]
 where we have used Lemma \ref{lem_normmHk}. 
 It remains only to control the cubic term $RHS_3$. This is the only term for the control of which we will use the induction assumption.  We have indeed \eqref{eq_rec} and consequently, for all $k$ such that $2\leq k\leq L$ we have $|\alpha_k|\leq C\e^2$. We also recall that $|\alpha_1|\leq C_1\e$. Enforcing that $C\geq C_1$ without restriction, we compute that:
 $$
 \begin{aligned}
| RHS_3| & \leq  \left|\sum_{k=2}^{L+1}\f{\sqrt{k!}\alpha_k}{2^k} \sum_{l=1}^{k-1} \dfrac{\alpha_l \alpha_{k-l}}{\sqrt{l! (k-l)!}}\right|  \\
 & \leq \f{1}{2\sqrt 2}|\alpha_2|\alpha_1^2 +C^2\e^3\sum_{k=3}^{L+1}\f{ { |} \alpha_k { |}}{2^k} \sum_{l=1}^{k-1} \sqrt{\begin{pmatrix}  k \\  l  \end{pmatrix}}\\
 & \leq \f{1}{2\sqrt 2}|\alpha_2|\alpha_1^2+C^2\e^3\sum_{k=3}^{L+1}\f{ \sqrt{k}\,  { |}\alpha_k { |}}{2^k}
 \left( \sum_{l=1}^{k-1}\begin{pmatrix}  k \\  l  \end{pmatrix}\right)^{1/2}\\
 &\leq \f{1}{2\sqrt 2}|\alpha_2|\alpha_1^2+C^2\e^3\sum_{k=3}^{L+1}\f{ \sqrt{k}\,  { |}\alpha_k { |}}{2^{k/2}}\\
 &\leq  \f{C_1^2}{2\sqrt 2}|\alpha_2|\e^2+C^2\e^3\left( \sum_{k=3}^{L+1}\alpha_k^2\right)^{1/2}
 \left( \sum_{k=3}^{+\infty} \f{ k}{2^{k}}\right)^{1/2}\\
 &\leq \f{ B_3}{\sqrt{2}}|\alpha_2|\e^2+C^2\f{B_3}{\sqrt{2}}\e^3\left( \sum_{k=3}^{L+1}\alpha_k^2\right)^{1/2}.
 \end{aligned}
 $$
 We next choose $\e_0$ such that $C^2\e_0\leq 1$. We deduce that for all $\e\leq \e_0$,
  $$
 |RHS_3| \leq 
   \f{B_3}{\sqrt{2}}\e^2\left(|\alpha_2| +\left( \sum_{k=3}^{L+1}\alpha_k^2\right)^{1/2}\right)\leq B_3\e^2 \left( \sum_{k=2}^{L+1}\alpha_k^2\right)^{1/2}.
  $$
  Combining the inequalities above we deduce that
  $$
\left(\f 12-(C_m+B_2)\e^2\right) \left(\sum_{k=2}^{L+1}\alpha_k^2\right)^{1/2}\leq \big( B_1+K_m +K_1C_1\e+B_3 \big)\e^2.
  $$
 Choosing $C$ large enough { -- wrt constants $B_1,K_m$ and $B_3$ that are independant of $L$ --} and $\e_0$ small enough { -- wrt $C_m,B_2,K_1,C_1$ that are also independant of $L$ --}  we obtain the result.


\section{Proof of Theorem \ref{thm_stat}}
We look now for a solution to {\eqref{eq_Nepsform_steady}} in the form ${\overline N_{\e}} = h_{\e}G$ 
where $h_{\e} \in L^2(\mathbb R,G(x){\rm d}x)$ and $\varepsilon$ is sufficiently small.
{Note that ultimately we are interested in a positive solution $N_\e$ corresponding to a population density. However, for the moment we focus on the proof of Theorem \ref{thm_stat} which does not require a positivity condition. The positivity of the steady solution will follow from Theorem \ref{thm_longtime}.}

 We recall that in Section \ref{sec:Hermite}, we transformed \eqref{eq_Nepsform_steady} into a problem on $\ell^2(\mathbb N)$ by writing:
\[
h = \sum_{k=0}^{\infty} \alpha_k H_k.
\]
Noticing that {$\overline{N}_{\e}$ being of unit mass} translates into $\alpha_0= 1,$  we infered the infinite dimensional system of equations \eqref{eq-alpha} in terms of $(\alpha_{k})_{k \geq 1}.$ 
We want to construct a unique solution to this system such that:
 \[
|\alpha_1| \leq C\e \qquad |\overline{\alpha}_2| \leq C \e^2,
 \] 
 with a $C$ sufficiently large and an $\e$ sufficiently small. Below we assume $C$ and $\e$ 
 are given and we only point out restrictions on these quantities for our result to hold true. 
 To avoid incompatibility requirements, we introduce other notations $K_m$ for a constant depending on $m$ and $C$ and $K$ for a constant independent of relevant parameter.

\medskip

To prepare computations we provide a preliminary analysis of \eqref{eq-alpha} in order to design a resolution method.
Indeed, in case $k=1,$ we have $2^{k-1} - 1 = 0.$ Hence, the factor of $\alpha_1$ in the left-hand side of \eqref{eq-alpha} is $(H_0,m_{\e}H_0)$ that is of order $\varepsilon^2,$ while it will remain of order $1$ for $k\geq 2.$ It is then not possible to use this term as a pivot to compute $\alpha_1$ while it is possible for $\alpha_k$ ($k \geq 2$). This remark motivates the following splitting to solve \eqref{eq-alpha}.   
 
 \medskip
 
Firstly, assuming $\overline{\alpha}_2$ is given, we infer that $\alpha_1$ solves:
 \begin{multline} \label{eq-alpha1}
  m_{1}^{(\varepsilon)} \alpha_1^2 +  \left[(H_0, m_{\varepsilon} H_0)_G   - (H_1,m_{\varepsilon} H_1)_G + \sum_{l=2}^{\infty} \alpha_l m_l^{(\varepsilon)}\right]  \alpha_1 
  \\ = m_1^{(\varepsilon)} + \sum_{l=2}^{\infty} \alpha_l \left( H_l ,  m_{\varepsilon}  H_1\right)_G .
\end{multline}
{ 
A central technical result in our analysis then reads:
\begin{lemma} \label{lem_alpha1}
Let $C >0,$ there exists a  constant $K_m$ depending only on $m$ such that, for $\varepsilon$ sufficiently small (depending on $C$) the following properties hold true: 
\begin{itemize}
\item[(i)] Given $\overline{\alpha}_2 \in B_{\ell^2(\mathbb N_2)}(0,C\varepsilon^2)$  there is a unique  $\alpha_1 \in  (-K_{m}\varepsilon,K_m\varepsilon)$ solution to \eqref{eq-alpha1}.
\item[(ii)] The induced mapping $\overline{\alpha}_2 \to \alpha_1[\overline{\alpha}_2]$ that is defined on $B_{\ell^2(\mathbb N_2)}(0,C\varepsilon^2)$ is $K_{m}$-Lipschitz.
\end{itemize}
\end{lemma}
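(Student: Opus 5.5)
Equation \eqref{eq-alpha1} is a scalar quadratic equation $a\alpha_1^2 + b\alpha_1 = c$ with coefficients depending on $\overline{\alpha}_2$:
\[
a = m_1^{(\e)},\qquad b = (H_0,m_\e H_0)_G - (H_1,m_\e H_1)_G + \sum_{l\ge 2}\alpha_l m_l^{(\e)},\qquad c = m_1^{(\e)} + \sum_{l\ge2}\alpha_l (H_l, m_\e H_1)_G .
\]
The plan is to show that $b$ is of exact order $\e^2$ with a definite sign, that $c=O(\e^3)$ and $a=O(\e^3)$. Then the root near zero is $\alpha_1\approx c/b = O(\e)$, and the other root, near $-b/a=O(\e^{-1})$, lies outside $(-K_m\e,K_m\e)$.

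\textbf{Step 1: the pivot $b$.} Using the Taylor expansion $m_\e(x)=m''(0)\e^2x^2/2+\e^3x^3 R_\e(x)$, with $|R_\e(x)|\le A_m(1+|x|^p)$ by \eqref{as:D3m-polynom}, we get
\[
(H_0,m_\e H_0)_G=\tfrac{m''(0)}2\e^2\sigma_2+O(\e^3),\qquad (H_1,m_\e H_1)_G=\tfrac{m''(0)}{2}\e^2\sigma_4+O(\e^3).
\]
Since $\sigma_2=1$ and $\sigma_4=3$, this gives $(H_0,m_\e H_0)_G-(H_1,m_\e H_1)_G=-m''(0)\e^2+O(\e^3)$, where $m''(0)=\pm1$ by \eqref{as:m-extremum}. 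By Cauchy--Schwarz and Lemma \ref{lem_m}, $|\sum_{l\ge 2}\alpha_l m_l^{(\e)}|\le C\e^2\cdot K_m\e^2$. Hence $|b|\ge \e^2/2$ for $\e$ small depending on $C$.

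\textbf{Step 2: $a$ and $c$.} Lemma \ref{lem_m} gives $|a|\le K_m\e^3$. For $c$, Cauchy--Schwarz gives $|\sum_l \alpha_l (H_l,m_\e H_1)_G|\le \|\overline\alpha_2\|\,\|m_\e H_1\|_{L^2(G)}\le C\e^2K_1\e^2$ by Lemma \ref{lem_normmHk}. Together with $|m_1^{(\e)}|\le K_m\e^3$, this yields $|c|\le K_m\e^3+CK_1\e^4\le 2K_m\e^3$ for $\e$ small.

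\textbf{Step 3: existence and uniqueness via a fixed point.} Rewrite the equation as $\alpha_1=\Phi(\alpha_1):=(c-a\alpha_1^2)/b$ on $I=[-K\e,K\e]$ with $K=8K_m$. On $I$,
\[
|\Phi(\alpha_1)|\le \frac{2}{\e^2}\bigl(2K_m\e^3+K_m\e^3K^2\e^2\bigr)\le K\e ,
\qquad |\Phi'(\alpha_1)|\le \frac{2|a|\,2K\e}{|b|}\le 8K_mK\e^2<1
\]
for $\e$ small. So $\Phi$ is a contraction of $I$, which gives a unique root in $I$ and hence uniqueness in $(-K_m\e,K_m\e)$ after renaming the constant. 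Alternatively, one can use the explicit quadratic formula: the other root has modulus $\gtrsim |b|/|a|\gtrsim\e^{-1}$.

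\textbf{Step 4: Lipschitz dependence.} Both $b$ and $c$ are affine in $\overline\alpha_2$. Their Lipschitz constants are $\|(m_l^{(\e)})_l\|\le K_m\e^2$ and $\|m_\e H_1\|\le K_1\e^2$ respectively. Subtract the fixed-point equations for two data $\overline\alpha_2,\overline\alpha_2'$ and use the contraction bound:
\[
|\alpha_1-\alpha_1'|\le \tfrac12|\alpha_1-\alpha_1'| + \frac{|c-c'|+|\alpha_1'|\,|b-b'|}{|b|}\lesssim \frac{\e^2+\e\cdot\e^2}{\e^2}\|\overline\alpha_2-\overline\alpha_2'\| .
\]
This gives a Lipschitz constant depending only on $m$.

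\textbf{Main obstacle.} The delicate point is Step 1. It relies on the cancellation $\sigma_2-\sigma_4=-2\neq0$, which makes the pivot nondegenerate at order $\e^2$ regardless of the sign of $m''(0)$. The remainder terms must also be kept uniformly $O(\e^3)$ using the polynomial growth of $m'''$.
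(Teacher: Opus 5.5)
Your proof is correct. It rests on the same estimates as the paper's: the nondegenerate pivot $(H_0,m_\varepsilon H_0)_G-(H_1,m_\varepsilon H_1)_G=-m''(0)\varepsilon^2+O(\varepsilon^3)$, which comes from $\sigma_2-\sigma_4=-2$, together with $|m_1^{(\varepsilon)}|\le K_m\varepsilon^3$ and the $O(\varepsilon^2\|\overline{\alpha}_2\|)$ bounds on the two $\overline{\alpha}_2$-dependent terms from Lemmas \ref{lem_m} and \ref{lem_normmHk}. Where you differ is in how the small root is located.

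The paper writes both roots in closed form, $\alpha_1=\frac{D^{(\varepsilon)}+m_\beta}{2m_1^{(\varepsilon)}}(\sqrt{1+\Delta_\beta}-1)$ and $\tilde\alpha_1=-\frac{D^{(\varepsilon)}+m_\beta}{2m_1^{(\varepsilon)}}(\sqrt{1+\Delta_\beta}+1)$. It then shows $|\alpha_1|\le K_m\varepsilon$ while $|\tilde\alpha_1|\gtrsim 1/\varepsilon$, and gets the Lipschitz bound by differencing these expressions, including $\Delta_\beta$.

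You instead run a contraction for $\Phi(\alpha_1)=(c-a\alpha_1^2)/b$ on $[-K\varepsilon,K\varepsilon]$. You then get the Lipschitz estimate in one line by subtracting the two fixed-point identities, using that $a=m_1^{(\varepsilon)}$ does not depend on $\overline{\alpha}_2$.

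Your route never divides by $m_1^{(\varepsilon)}$. So the degenerate case $m_1^{(\varepsilon)}=0$ (e.g. $m$ even), which the paper sets aside as ``similar but simpler'', is covered by the same argument, and the bookkeeping is lighter. The explicit formula gives slightly more than the lemma asks: when $m_1^{(\varepsilon)}\neq 0$, it shows the small root is the only root in a window of size of order $1/\varepsilon$. Your contraction gives uniqueness only on $[-K\varepsilon,K\varepsilon]$, which is exactly what is claimed; the fixed point lands strictly inside, since $|\Phi|\le 4K_m\varepsilon+O(\varepsilon^3)$ there, so the open-interval formulation is also met.
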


We remark that, in terms of $\alpha_1,$ the equation \eqref{eq-alpha1} is quadratic if $m_{1}^{(\varepsilon)} \neq 0$ while it degenerates into a linear equation when $m_{1}^{(\varepsilon)} = 0.$ The linear case   arises when $m$ is even which we assume in our analysis when $m$ has a maximum in $0.$ However, in this latter context, our approach restricts to even solutions for which $\alpha_1$ vanishes by assumption.  We postpone the proof of Lemma \ref{lem_alpha1} to a further subsection.
}
\medskip

\medskip

Secondly, assuming  that $\alpha_1$ is given, the system of equations \eqref{eq-alpha} for $k \geq 2$ reads: 
\[
\overline{\mathcal L}[\overline{\alpha}_2] = \overline{m}_2^{(\varepsilon)} - \overline{\mathcal Q}[\overline{\alpha}_2]
\]
where,  $\overline{\mathcal L}$ and $\overline{\mathcal Q}$ are defined by
\[
\begin{aligned}
\mathcal L_k[\overline{\alpha}_2] & :=  \left( \dfrac{1}{2^{k-1}}  - 1  + (H_0, m_{\varepsilon} H_0)_G  \right) \alpha_k  - \left( \sum_{l=2}^{\infty} \alpha_l H_l ,  m_{\varepsilon} H_k\right)_G \\
\mathcal{Q}_k[\overline{\alpha}_2] & := \dfrac{\sqrt{k!}}{2^k}   \sum_{l=1}^{k-1} \dfrac{\alpha_l \alpha_{k-l}}{\sqrt{l! (k-l)!}} + \alpha_k \sum_{l=1}^{\infty} \alpha_l (H_l,m_{\varepsilon}H_0)_{G}  \\
& \qquad -  \alpha_1(H_1,m_{\varepsilon}H_k)_G.
\end{aligned}
\]
for $k \geq 2.$ {Our rewriting of the set of equations for $k \geq 2$ relies on the remark that we have in our system source terms depending on $\overline{m}^{(\varepsilon)},$ linear terms in $\overline{\alpha}_2$ and bilinear terms in $\overline{\alpha}_1.$ We point out that, despite $\alpha_1$ is {\em a priori} a fixed quantity so that the associated terms could be considered as source terms or linear terms, we treat them as nonlinearities since {in} what follows, we assume that $\alpha_1 = \alpha_1[\overline{\alpha}_2].$}
 
 For legibility, we dropped the $\varepsilon$-dependencies for 
$\overline{\mathcal L},$ $\overline{\mathcal Q}.$ 
Key results concerning these mappings are the following two lemmas. Concerning $\mathcal L,$ we have:
\begin{lemma} \label{lem_invL}
Let $\varepsilon$ be sufficiently small.  For  arbitrary $\overline{Q}_2 \in \ell^2(\mathbb N_2)$ there is a unique $\overline{\alpha}_2 {:= {\cal L}^{-1}[\overline{Q}_2]} \in \ell^2(\mathbb N_2)$ satisfying
\begin{itemize}
\item  $\overline{\mathcal L}[\overline{\alpha}_2] = \overline{Q}_2$
\item $|\overline{\alpha}_2| \leq 2^{k_0+2} |\overline{Q}_2|$ {with $k_0 \in \mathbb N$ to be made precise depending on $m$ only.}
\end{itemize} 
\end{lemma}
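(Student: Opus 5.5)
The plan is to prove a coercivity estimate for $-\overline{\mathcal L}$ on $\ell^2(\mathbb N_2)$ and then apply the Lax--Milgram theorem. Identify $\overline{\alpha}_2$ with $h=\sum_{k\ge 2}\alpha_k H_k$. Then
\[
-(\overline{\mathcal L}[\overline{\alpha}_2],\overline{\alpha}_2)=\sum_{k\geq 2}\Big(1-\frac{1}{2^{k-1}}\Big)\alpha_k^2-(H_0,m_\e H_0)_G\,|\overline{\alpha}_2|^2+\int_{\mathbb R} m_\e\, h^2\, G\,{\rm d}x .
\]
Lemma \ref{lem_m} gives $|(H_0,m_\e H_0)_G|\le K_m\e^2$, so the middle term is harmless. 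The difficulty is that $m$ need not be nonnegative: by \eqref{as:m-extremum} we only know $m\geq -\mu$ with $\mu:=-\min_{\mathbb R} m<1$. Moreover the diagonal factor $1-2^{1-k}$ equals only $1/2$ for $k=2$, which may be smaller than $\mu$. This is where $k_0$ enters. I fix $k_0$, depending on $m$ only, such that $1-2^{1-k_0}-\mu\geq 2^{-k_0}$; this is possible since $\mu<1$ (if $\mu\le 0$ any $k_0\ge 2$ works).

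Next I split $h=h_{\rm low}+h_{\rm high}$, where $h_{\rm low}=\sum_{2\le k< k_0}\alpha_kH_k$ and $h_{\rm high}=\sum_{k\geq k_0}\alpha_kH_k$. Expanding $\int m_\e h^2G$ gives three pieces.
\begin{itemize}
\item For the high part, the pointwise bound $m_\e\ge -\mu$ gives $\int m_\e h_{\rm high}^2G\geq -\mu|\alpha_{\rm high}|^2$.
\item For the low and cross terms, $h_{\rm low}$ is a finite combination of the fixed polynomials $H_2,\dots,H_{k_0-1}$. Lemma \ref{lem_normmHk} then gives $\|m_\e h_{\rm low}\|_{L^2(G)}\le K\e^2|\alpha_{\rm low}|$ with $K$ depending on $k_0$ and $m$. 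By Cauchy--Schwarz, $|\int m_\e h_{\rm low}(h_{\rm low}+2h_{\rm high})G|\le 3K\e^2|\overline{\alpha}_2|^2$.
\end{itemize}
Combining these with the diagonal terms ($\ge 1/2$ on low modes, $\ge 1-2^{1-k_0}$ on high modes) yields
\[
-(\overline{\mathcal L}[\overline{\alpha}_2],\overline{\alpha}_2)\geq \tfrac12|\alpha_{\rm low}|^2+2^{-k_0}|\alpha_{\rm high}|^2-K'\e^2|\overline{\alpha}_2|^2\geq 2^{-k_0-1}|\overline{\alpha}_2|^2
\]
for $\e$ small.

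The main obstacle is that multiplication by $m_\e$ is unbounded on $L^2(G)$, since $m$ has polynomial growth. Hence $\overline{\mathcal L}$ is not bounded on $\ell^2(\mathbb N_2)$, and the inverse has to be built in a form sense. I would work in the form domain
\[
V=\Big\{\overline{\alpha}_2:\ \int_{\mathbb R}(1+|m_\e|)h^2G<\infty\Big\}
\]
with its natural norm, and use the bilinear form $a(\overline\alpha,\overline\beta)=-(\overline{\mathcal L}\overline\alpha,\overline\beta)$, which is continuous on $V$. To obtain coercivity in $V$ rather than only in $\ell^2$, I write $|m_\e|=m_\e+2(m_\e)_-$ with $0\le (m_\e)_-\le\mu$. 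Then
\[
\int|m_\e|h^2G\le a(\overline{\alpha}_2,\overline{\alpha}_2)+C|\overline{\alpha}_2|^2\le C' a(\overline{\alpha}_2,\overline{\alpha}_2)
\]
by the $\ell^2$ coercivity above. Since $\ell^2(\mathbb N_2)\subset V'$, Lax--Milgram gives, for each $\overline Q_2\in\ell^2(\mathbb N_2)$, a unique $\overline{\alpha}_2\in V$ with $\overline{\mathcal L}[\overline{\alpha}_2]=\overline Q_2$ weakly. This weak equation holds componentwise, because each $H_k$ lies in $V$ by polynomial growth of $m$. Uniqueness in $\ell^2(\mathbb N_2)$ also needs care: I would note that any $\ell^2$ solution with $\overline{\mathcal L}\overline{\alpha}_2\in\ell^2$ automatically lies in $V$, by testing the equation against truncations of $h$.

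Finally, testing the equation with $\overline{\alpha}_2$ itself gives
\[
2^{-k_0-1}|\overline{\alpha}_2|^2\le|\overline Q_2|\,|\overline{\alpha}_2|,
\]
hence $|\overline{\alpha}_2|\le 2^{k_0+1}|\overline Q_2|$. This is within the announced bound $2^{k_0+2}|\overline Q_2|$; the extra factor of $2$ leaves room to absorb the $K'\e^2$ losses and the term $(H_0,m_\e H_0)_G$.
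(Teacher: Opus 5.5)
Your argument is correct in its main line but follows a different route from the paper. The coercivity mechanism is the same in both. On finitely many low modes the diagonal factor is at least $1/2$ and $m_\e$ contributes only $O(\e^2)$ via Lemma~\ref{lem_normmHk}. On high modes $1-2^{1-k}$ beats the pointwise bound $m_\e\ge\min m>-1$ from \eqref{as:m-extremum}.

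The paper then builds $\overline{\mathcal L}^{-1}$ by Galerkin truncation. The matrices $\mathbb M^{(K)}$ are coercive uniformly in $K$, and the truncated solutions are bounded by $2^{k_0+2}|\overline Q_2|$. A weak $\ell^2$-limit satisfies each component equation, because for fixed $k$ the map $\overline\beta\mapsto(\sum_l\beta_lH_l,m_\e H_k)_G$ is continuous on $\ell^2$. Uniqueness is a separate energy identity, and existence never needs a domain for multiplication by $m_\e$.

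Your Lax--Milgram argument on the form domain $V$ gives existence and uniqueness in $V$ at once, plus the extra bound $\int(1+|m_\e|)h^2G<\infty$. The price is that you must then pass from $V$ to all of $\ell^2(\mathbb N_2)$.

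Your explicit margin $1-\mu\ge 3\cdot 2^{-k_0}$ is also cleaner than the paper's bookkeeping. With \eqref{eq_defk0} and high modes $k\ge k_0+2$, the diagonal factor at $k=k_0+2$ is $1-2^{-(k_0+1)}$, not the $1-2^{-(k_0+2)}$ used in \eqref{eq_uniq2}. It therefore exactly cancels the lower bound on $m_\e$, and the paper's split must be shifted by one index.

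The step that does not work as you state it is the passage from $V$ to $\ell^2(\mathbb N_2)$. Testing the component equations against the truncation $h_K=\sum_{k=2}^K\alpha_kH_k$ only shows that $\int m_\e\,h\,h_K\,G$ is bounded in $K$. Since $h\,h_K$ changes sign and $h-h_K$ is not controlled in any $m_\e$-weighted norm, this bounds neither $\int m_\e h_K^2G$ nor $\int|m_\e|h^2G$.

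What you need is the determinacy argument the paper uses at the end of the existence part of the proof of Theorem~\ref{thm_stat}. The equations say $(m_\e h,H_k)_G=g_k:=\bigl(2^{1-k}-1+(H_0,m_\e H_0)_G\bigr)\alpha_k-Q_k$ for $k\ge 2$, with $(g_k)\in\ell^2$. Set $\tilde g:=(m_\e h,H_0)_G+(m_\e h,H_1)_G\,H_1+\sum_{k\ge2}g_kH_k\in L^2(\mathbb R,G(x)dx)$. The function $\pi=(m_\e h-\tilde g)G$ then has all moments zero, and $\pi e^{\eta|x|}\in L^1$ for every $\eta>0$ by Cauchy--Schwarz and the polynomial growth of $m$. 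Hence $\hat\pi$ is entire with vanishing Taylor coefficients at $0$. So $m_\e h=\tilde g\in L^2(\mathbb R,G(x)dx)$, and $h\in V$.

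A componentwise solution lying in $V$ is a variational one only after a density argument, so it is simpler to conclude directly. For the difference $\overline\delta$ of two $\ell^2$ solutions, Parseval gives $\int m_\e h_\delta^2G=\sum_{k\ge2}(m_\e h_\delta,H_k)_G\,\delta_k$. Hence $a(\overline\delta,\overline\delta)=0$, and your coercivity yields $\overline\delta=0$. The paper's own uniqueness proof uses the identity $\sum_k\alpha_k(h,m_\e H_k)_G=\int m_\e h^2G$ for an arbitrary $\ell^2$ solution without comment, so it needs this justification as well.
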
 
{Assuming that $\alpha_1 = \alpha_1 [\overline{\alpha}_2]$ in the formulas defining $\mathcal Q,$} we obtain:
\begin{lemma} \label{lem_Q}
There exists a constant $K_m$
depending only on $m$ such that,  for $\varepsilon$ sufficiently small, depending on $C,$ the mapping $\overline{\mathcal Q}[\cdot]$   is well-defined on $B_{\ell^2(\mathbb N_2)}(0,C\e^2)$ with 
\begin{itemize}
\item $\overline{\mathcal Q}[\bar{0}]| \leq  K_m \varepsilon^3 $
\item $\overline{\alpha}_1 \mapsto \overline{\mathcal Q}[\overline{\alpha}_1]$ is $K_m \varepsilon$-Lipschitz on $B_{\ell^2(\mathbb N_2)}(0,C\e^2)$.
\end{itemize}
\end{lemma}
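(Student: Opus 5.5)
The plan is to prove both items by direct term-by-term estimates on the three pieces of $\mathcal Q_k$. We plug in $\alpha_1=\alpha_1[\overline{\alpha}_2]$ from Lemma \ref{lem_alpha1}, and use Lemma \ref{lem_m}, Lemma \ref{lem_normmHk} and the Hermite-coefficient form of the bilinear term. Write $\mathcal Q_k=\mathcal Q^{(a)}_k+\mathcal Q^{(b)}_k+\mathcal Q^{(c)}_k$ with
\[
\mathcal Q^{(a)}_k=\frac{\sqrt{k!}}{2^k}\sum_{l=1}^{k-1}\frac{\alpha_l\alpha_{k-l}}{\sqrt{l!(k-l)!}},\qquad
\mathcal Q^{(b)}_k=\alpha_k\sum_{l\ge1}\alpha_l m^{(\varepsilon)}_l,\qquad
\mathcal Q^{(c)}_k=-\alpha_1 (H_1,m_\varepsilon H_k)_G .
\]
The term $\mathcal Q^{(a)}$ is $G^{-1}T_1$-type: it is the $k$-th Hermite coefficient of $\tilde T_1[h',h']$ with $h'=\sum_{l\ge1}\alpha_lH_l$, by \eqref{tildeT1-Hermite}. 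Its $\ell^2$ norm is therefore controlled by the continuity of $\tilde T_1$ (Appendix \ref{sec:appendixT}); more elementarily, the Cauchy--Schwarz computation already used for $RHS_3$ in Section \ref{sec:conc2} gives $\|\overline{\mathcal Q}^{(a)}\|\le K(|\alpha_1|+\|\overline\alpha_2\|)\|\overline\alpha_2\| + K\alpha_1^2$. Here the isolated $\alpha_1^2$ sits in the $k=2$ entry only.

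\emph{Lipschitz bound.} We take two sequences $\overline\alpha_2,\overline\beta_2$ in $B_{\ell^2(\mathbb N_2)}(0,C\varepsilon^2)$, and use $|\alpha_1-\beta_1|\le K_m\|\overline\alpha_2-\overline\beta_2\|$ from Lemma \ref{lem_alpha1}(ii) and $|\alpha_1|,|\beta_1|\le K_m\varepsilon$. Bilinearity then gives
\[
\|\overline{\mathcal Q}^{(a)}[\overline\alpha_2]-\overline{\mathcal Q}^{(a)}[\overline\beta_2]\|\le K(|\alpha_1|+|\beta_1|+\|\overline\alpha_2\|+\|\overline\beta_2\|)\big(|\alpha_1-\beta_1|+\|\overline\alpha_2-\overline\beta_2\|\big)\le K_m\varepsilon\|\overline\alpha_2-\overline\beta_2\|,
\]
provided $C\varepsilon\le 1$. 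For $\mathcal Q^{(b)}$ we use $\sum_l|m^{(\varepsilon)}_l|^2\le K_m^2\varepsilon^4$, which gives a Lipschitz constant $K_m\varepsilon^2$. For $\mathcal Q^{(c)}$ we use $\|m_\varepsilon H_1\|_{L^2(G)}\le K_1\varepsilon^2$ (Lemma \ref{lem_normmHk}) together with Bessel's inequality, which gives a constant $K_m\varepsilon^2$. This settles the second item with the announced factor $K_m\varepsilon$.

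\emph{Value at $\bar 0$.} With $\overline\alpha_2=\bar0$, the terms $\mathcal Q^{(b)}$ vanish. By Bessel, $\|\overline{\mathcal Q}^{(c)}\|\le|\alpha_1[\bar 0]|\,K_1\varepsilon^2\le K_m\varepsilon^3$. The only remaining contribution is $\mathcal Q^{(a)}_2[\bar0]=\frac{1}{2\sqrt2}\alpha_1[\bar0]^2$, so the stated bound reduces to $\alpha_1[\bar0]^2\le K_m\varepsilon^3$. This is the step I expect to be the main obstacle. At $\overline\alpha_2=\bar0$, equation \eqref{eq-alpha1} reads $m_1^{(\varepsilon)}\alpha_1^2+[(H_0,m_\varepsilon H_0)_G-(H_1,m_\varepsilon H_1)_G]\alpha_1=m_1^{(\varepsilon)}$. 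A Taylor expansion to order three (as in Lemma \ref{lem_m}) gives $(H_0,m_\varepsilon H_0)_G-(H_1,m_\varepsilon H_1)_G=-m''(0)\varepsilon^2+O(\varepsilon^3)$ and $m_1^{(\varepsilon)}=\tfrac{1}{2}m'''(0)\varepsilon^3+o(\varepsilon^3)$. Hence $\alpha_1[\bar0]=-\frac{m'''(0)}{2m''(0)}\varepsilon+o(\varepsilon)$.

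The plan is therefore to obtain $\alpha_1[\bar0]=O(\varepsilon^{3/2})$ by exploiting cancellations in $m_1^{(\varepsilon)}$, which is automatic when $m'''(0)=0$. In particular, in the even case of item (ii) of Theorem \ref{thm_longtime}, $m_1^{(\varepsilon)}=0$ and $\alpha_1\equiv0$. In that case $\mathcal Q^{(a)}_2[\bar0]=0$ and the full $K_m\varepsilon^3$ bound follows from the $\mathcal Q^{(c)}$ estimate above.

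In the general non-even case, the expansion above shows that $\mathcal Q^{(a)}_2[\bar0]$ is genuinely of order $\varepsilon^2$ whenever $m'''(0)\neq 0$. I would therefore first recheck this against the statement. If the cancellation cannot be found, then on the line $k=2$ the $\alpha_1^2$ piece must be treated not as part of $\overline{\mathcal Q}$ but moved into the source term alongside $\overline m_2^{(\varepsilon)}$; both are of size $K_m\varepsilon^2$. After that relocation, the remaining $\overline{\mathcal Q}[\bar0]$ satisfies the $K_m\varepsilon^3$ bound by the estimates above, and this is the form in which the lemma feeds the fixed-point argument with Lemma \ref{lem_invL}.
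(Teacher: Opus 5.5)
Your treatment of the Lipschitz item is essentially the paper's. The paper splits $\mathcal Q_k[\bar\beta]=\mathcal Q^{bl}_k[\bar\beta,\bar\beta]+\mathcal R_k[\bar\beta]$, bounds the bilinear part with the binomial/Cauchy--Schwarz estimate of Appendix \ref{sec:appendixT}, and controls the $\alpha_1$-dependent remainder $\mathcal R$, which contains $\frac{1}{2\sqrt2}\alpha_1[\bar\beta]^2\delta_{k=2}$, through Lemma \ref{lem_alpha1}(ii) and Lemmas \ref{lem_m} and \ref{lem_normmHk}. Your three-piece splitting does the same job.

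For the first item, your objection is correct, and it identifies an actual error in the paper rather than a gap in your proposal. The paper's proof asserts $\mathcal Q_k[\bar0]=\alpha_1[\bar0](m_\e H_1,H_k)_G$. This drops exactly the $k=2$ entry $\frac{1}{2\sqrt2}\alpha_1[\bar0]^2$ that it then keeps in $\mathcal R_2$ a few lines later.

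Your expansions are right: $D^{(\e)}=-m''(0)\e^2+O(\e^3)$ and $m_1^{(\e)}=\frac12 m'''(0)\e^3+o(\e^3)$, hence $\alpha_1[\bar0]=-\frac{m'''(0)}{2m''(0)}\e+o(\e)$. This is consistent with the moment side: pushing the Taylor expansion in step (ii) of the proof of Theorem \ref{thm:concentration}-(i) one order further gives $M_{\e,1}=-\frac12 m'''(0)\e^2+o(\e^2)$, i.e. $\alpha_1=\widetilde M_{\e,1}\sim\e$.

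So when $m'''(0)\neq0$ the bound $|\overline{\mathcal Q}[\bar0]|\le K_m\e^3$ is false, and no proof can give it. What is true is $|\overline{\mathcal Q}[\bar0]|\le K_m\e^2$ with $K_m$ depending only on $m$, not on $C$. The $\e^3$ bound holds when $m_1^{(\e)}=0$, for instance when $m$ is even, since then $\alpha_1[\bar0]=0$.

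Your repair is the right one, with two precisions.

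First, the term $\frac{1}{2\sqrt2}\alpha_1[\overline\alpha_2]^2$ is not a genuine source, because it depends on $\overline\alpha_2$. It must remain in the contraction estimate with the $K_m\e$-Lipschitz bound you already proved. The cleanest fix is therefore to keep $\overline{\mathcal Q}$ unchanged and replace the first item by the $C$-independent $O(\e^2)$ bound. In the proof of Theorem \ref{thm_stat}, the estimate $|\overline{\mathcal Q}[\overline\alpha_2]|\le K_m(C+1)\e^3$ then becomes $K_m'\e^2+K_mC\e^3$, with $K_m'=\frac{1}{2\sqrt2}K_m^2+K_mK_1$. Taking $\overline C=2^{k_0+3}(K_m+K_m')$ instead of $2^{k_0+3}K_m$, and $\e$ small depending on $C$, $\overline{\mathcal T}$ still maps $B_{\ell^2(\mathbb N_2)}(0,C\e^2)$ into itself. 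The contraction step is untouched, so Theorem \ref{thm_stat} survives.

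Second, your remark that the bound is automatic when $m'''(0)=0$ needs a little more regularity than \eqref{as:D3m-polynom}. With $m'''$ merely bounded near $0$, you get no rate on $m_1^{(\e)}/\e^3$. A polynomial bound on $m^{(4)}$ suffices: by parity it gives $m_1^{(\e)}=O(\e^4)$, hence $\alpha_1[\bar0]=O(\e^2)$ and the $\e^3$ bound.
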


Again the proofs of these technical results are postponed to a further subsection. We explain at first how they 
entail Theorem \ref{thm_stat}.

\subsection{Proof of Theorem \ref{thm_stat}.}

Let us denote by the same symbol $K_m$ the max of the $K_m$ constructed in Lemma \ref{lem_Q}, Lemma \ref{lem_alpha1} and Lemma \ref{lem_m}. Let then ${\overline C} = 2^{k_0+3} K_m$ and fix {$C> \overline{C},$ $\varepsilon$ sufficiently small} so that {Lemma \ref{lem_invL}} and Lemma \ref{lem_Q} hold true.

\medskip

{\it Existence.} We obtain existence of a solution by a fixed-point argument. Let $\overline{\mathcal T} : B_{\ell^2(\mathbb N_2)}(0,C \varepsilon^2) \to \ell^2(\mathbb N_2)$ be defined by:
\[
\overline{\mathcal T}[\overline{\alpha}_2] =\overline{\mathcal L}^{-1}[\overline{m}_2^{(\varepsilon)} - \overline{\mathcal Q}[\overline{\alpha}_2]]
\]
Let  $\overline{\alpha}_2 \in B_{\ell^2(\mathbb N_2)}(0,C\e^2).$ Applying Lemma \ref{lem_Q}, we have
\[
|\overline{\mathcal Q}[\overline{\alpha}_1]]| \leq {K_m (C+1)\e^3}  ,
\]
while Lemma \ref{lem_m} ensures that $|\overline{m}^{(\varepsilon)}| \leq {K_m} \varepsilon^2.$
Combining with {Lemma \ref{lem_invL}}, we obtain that:
\[
|\overline{\mathcal T}[\overline{\alpha}_2]| \leq 2^{k_0+2} \left({ K_m \varepsilon^2 + K_m (C+1)\e^3 }\right).
\]
Consequently, choosing $\e$ sufficiently small, we infer that $\overline{\mathcal T}$ maps $B_{\ell^2(\mathbb N_2)}(0,C \varepsilon^2)$ into itself. 
  
 \medskip
  
Furthermore,  given $\overline{\beta}^{(1)} $ and $\overline{\beta}^{(2)}$  in $B_{\ell^2(\mathbb N_2)}(0,C \varepsilon^2),$ applying successively the Lipschitz properties of $\overline{\mathcal Q}$ and the boundedness of {the right inverse $\mathcal L^{-1},$} we infer:
\[
\| \overline{\mathcal T}[\bar{\beta}^{(2)}] - \overline{\mathcal T}[\bar{\beta}^{(1)}]  \|_{\ell^2(\mathbb N_1)} \leq 2^{k_0+2} K_m \varepsilon  \|\bar{\beta}^{(1)} -\bar{\beta}^{(2)}\|_{\ell^2(\mathbb N_1)}.
\]
In particular, $\overline{\mathcal T}$ is a contraction up to restricting the size of $\varepsilon.$
We conclude that $\overline{\mathcal T}$ admits a unique fixed-point on $B_{\ell^2(\mathbb N_2)}(0,C \varepsilon^2)$ that we denote $\overline{\alpha}^{(\e)}_1 = (\alpha_{k}^{(\e)})_{k\geq 2}.$ 
We denote the corresponding $\alpha_1^{(\e)} = \alpha_1[\overline{\alpha}_2^{\e}].$

\medskip

We set now:
\[
h_{\e} = H_0 + \sum^{\infty}_{k=1} \alpha_k^{(\e)} H_k 
\]
By construction $h_{\e} \in L^2(\mathbb R, G(x)dx).$ Furthermore, for arbitrary $k \geq 0$ we can define:
\[
{\gamma_ k} = (m_{\e}h_\e, H_k)_G = (H_0,m_{\e} H_k)_G + \sum_{l=1}^{\infty} \alpha_l^{(\e)}(H_l,m_{\e} H_k)_{G}\,, 
\]
We can then recast the $k$-th equation of \eqref{eq-alpha} into:
\begin{equation} \label{eq_betak}
{\gamma_k} = \left(\tilde{T}[h_{\e}] - \left( 1 - \int_{\mathbb R} m_{\e}(x)h_{\e}(x)G(x) dx\right) h_{\e}  , H_k\right)_G  \qquad \forall \, k \geq 1.
\end{equation}
{In particular, thanks to  Lemma \ref{lem:T-cont},  we have $({\gamma_k})_{k \in \mathbb N} \in \ell^2(\mathbb N),$ and then $\sum_{k} {\gamma_k} {H_k} \in L^2(\mathbb R,G(x) dx).$ 
Let denote $\pi = {(\sum_{k} {\gamma_k} H_k - m_{\e} h_\e)G}.$ Since $m_{\e}$ grows polynomially, we have that  $x \mapsto \pi(x) \exp({\eta} |x|) \in L^1(\mathbb R)$ for all ${\eta} >0$ so that the Fourier transform $\hat{\pi}$ of $\pi$ {is} holomorphic on $\mathbb C.$ By construction, all derivatives of $\hat{\pi}$ vanish in the origin showing that $\hat{\pi}$ and thus $\pi$ vanish. Identifying Hermite coefficients, we conclude that:
\[
(\tilde{T}[h_{\e}](x) - h_{\e}(x)) - \left(m_{\e}(x) - \int_{\mathbb R} m_{\e}h_{\e} G(x)dx \right) h_{\e}(x)= 0 \qquad { \forall \, x \in \mathbb R}.
\]
}  

\medskip

{\it Uniqueness.} Let $N_{\e}$ be a solution to \eqref{eq_Nepsform} in $\mathcal U^{(\e)}(C).$ We can then construct $h_{\e}$ and the associated $\overline{\alpha}^{(\e)} \in \ell^2(\mathbb N).$ By assumption 
we have that $\overline{\alpha}_2^{(\e)} \in B_{\ell^2(\mathbb N_2)}(0,C\e^2).$ Multiplying the equation for $h_{\e}$ with $H_k,$ we infer that $\overline{\alpha}^{(\e)}$ satisfies \eqref{eq-alpha} so that 
$\overline{\alpha}_1^{(\e)}$ is indeed a fixed point of the above $\overline{\mathcal T}$ that is a contraction. This ends the proof.  

\medskip

In the following subsections, we give proofs for the technical lemmas : Lemma \ref{lem_invL}, Lemma 
\ref{lem_alpha1} and Lemma \ref{lem_Q}. 

\bigskip

\subsection{Proof of Lemma \ref{lem_invL}}

Let $\overline{Q}_2 \in \ell^2(\mathbb N_2)$ and remark that we are interested in solving the system:
\begin{equation} \label{eq_L}
\left( \dfrac{1}{2^{k-1}}  - 1  +(H_0, m_{\varepsilon} H_0)_G   \right) \alpha_k 
 - \left( \sum_{l=2}^{\infty} \alpha_l H_l ,  m_{\varepsilon}  H_k\right)_G  = Q_k
 \quad \forall \, k \geq 2,
\end{equation}
where the unknown is $\overline{\alpha}_2 \in \ell^2(\mathbb N_2).$

\begin{remark}
{
In this system,  one could expect to approximate the left-hand side by the diagonal operator with coefficients $(1/2^{k-1}-1)_{k\geq 2}$ that is  the operator obtained by deleting the $\varepsilon$-dependent terms. However, we lack a uniform bound of the perturbation in terms of $\varepsilon.$ We should then use this remark for small $k$ only and use 
that the second part of the left-hand side corresponds to the operator 
$h \mapsto m_{\varepsilon} h$ that is controlled by assumption \eqref{as:m-extremum}. Thanks to this assumption, we can introduce a number $k_0 \in \mathbb N$ (depending on $m$ but not on $\varepsilon$) such that:
\begin{equation} \label{eq_defk0}
m_{\varepsilon} + 1 \geq \dfrac{1}{2^{k_0+1}} \quad \forall \, \e >0.
\end{equation}
}
\end{remark}

{\em Uniqueness.} Firstly, we investigate uniqueness of a $\overline{\alpha}_2 \in \ell^2(\mathbb N_2)$ such that $\mathcal L[\overline{\alpha}_2] = \overline{Q}_2.$ By difference, this amounts to show the unique solution with $\overline{Q}_2 = 0$ is $\overline{\alpha}_2 = 0.$ 

So let's suppose $\overline{\alpha}_2 \in \ell^2(\mathbb N_2)$ is a solution with $\overline{Q}_2 = 0.$ Multiplying by $\alpha_k$ each $k$-equation and summing over $k,$ we obtain:
\begin{equation} \label{eq_uniq0}
\sum_{k=2}^{\infty} \left( \dfrac{1}{2^{k-1}}  - 1  +(H_0, m_{\varepsilon} H_0)_G   \right) \alpha_k^2 
 - \sum_{k=2}^{\infty}\left( \sum_{l=2}^{\infty} \alpha_l H_l ,  m_{\varepsilon}  H_k\right)_G\alpha_k  = 0. 
\end{equation}
Concerning the second term, we have:
\begin{multline*}
\sum_{k=2}^{\infty}\left( \sum_{l=2}^{\infty} \alpha_l H_l ,  m_{\varepsilon}  H_k\right)_G\alpha_k \\
\begin{aligned}
& = 
\int_{\mathbb R} m_{\e}(x) |\sum_{l=2}^{\infty} \alpha_l H_l(x)|^2 G(x) dx   \\
& =  \sum_{l=2}^{{k_0+1}}\sum_{l'=2}^{{k_0+1}} \alpha_l \alpha_{l'} \int_{\mathbb R} m_{\e}  H_l(x) H_{l'}(x) G(x) dx  + 2\sum_{l=2}^{{k_0+1}}\sum_{l'={k_0+2}}^{\infty} \alpha_l \alpha_{l'} \int_{\mathbb R} m_{\e}  H_l(x) H_{l'}(x) G(x) dx  \\
& \quad  + \int_{\mathbb R} m_{\e}(x) | \sum_{l={k_0+2}}^{\infty} \alpha_l H_l(x)|^2 G(x) dx ,
\end{aligned}
\end{multline*}
where
\[
\int_{\mathbb R} m_{\e} (x)| \sum_{l={k_0+2}}^{\infty} \alpha_l H_l(x)|^2 G(x) dx  \geq  - \left( 1 - \dfrac{1}{2^{ k_0+1}} \right) \sum_{l={k_0+2}}^{\infty} |\alpha_l|^2.
\]
{Combining} a Cauchy Schwarz inequality and Lemma \ref{lem_m}
 we have:
\begin{multline*}
\sum_{l=2}^{{k_0+1}}\sum_{l'={k_0+2}}^{\infty} \alpha_l \alpha_{l'}  \int_{\mathbb R} m_{\e}(x)  H_l(x) H_{l'}(x) G(x) dx
\\
\begin{aligned}
& \geq -  \left( \int_{\mathbb R} m_{\e}^{2}(x) |\sum_{l=2}^{{k_0+1}} \alpha_l H_l(x)|^2 G(x) dx  \right)^{\frac 12} \left( \sum_{l={k_0+2}}^{\infty} |\alpha_l|^2 \right)^{\frac 12}\\
& \geq -  \e^2 K_m \left( \sum_{l=2}^{{k_0+1}} |\alpha_l|^2  \right)^\frac 12  \left( \sum_{l={k_0+2}}^{\infty} |\alpha_l|^2 \right)^{\frac 12} \\
& \geq - \e^2 \frac{K_m}{2} \sum_{l=2}^{{k_0+1}} |\alpha_l|^2 - \e^2 \frac{K_m}{2} \sum_{l={k_0+2}}^{\infty} |\alpha_l|^2. 
\end{aligned}
\end{multline*}
with a constant $K_m$ depending only on $m$ (also through $k_0$). {Similarly, we obtain}
\begin{multline*}
\sum_{l=2}^{{k_0+1}}\sum_{l'=2}^{{k_0+1}} \alpha_l \alpha_{l'} \int_{\mathbb R} m_{\e}(x)  H_l(x) H_{l'}(x) G(x) dx\\
\begin{aligned}
& \geq -  \left( \int_{\mathbb R} m_{\e}(x)^{2} |\sum_{l=2}^{{k_0+1}} \alpha_l H_l(x)|^2 G(x) dx  \right)^{\frac 12} \left( \sum_{l=2}^{{k_0+1}} |\alpha_l|^2 \right)^{\frac 12}\\
& \geq -  \e^2 K_m \left( \sum_{l=2}^{{k_0+1}} |\alpha_l|^2  \right). 
\end{aligned}
\end{multline*}
Eventually, we conclude that
\begin{equation} \label{eq_uniq1}
- \sum_{k=2}^{\infty}\left( \sum_{l=2}^{\infty} \alpha_l H_l ,  m_{\varepsilon}  H_k\right)_G\alpha_k
\leq \left( 1 + K_{m} \e^2 - \dfrac{1}{2^{ k_0+1}} \right) \sum_{l={k_0+2}}^{\infty} |\alpha_l|^2 +{2} K_m \e^2 \sum_{l=2}^{{k_0+1}} |\alpha_l|^2.
\end{equation}
As for the first term {of \fer{eq_uniq0}}, we obtain similarly:
\begin{multline} \label{eq_uniq2}
\sum_{k=2}^{\infty} \left( \dfrac{1}{2^{k-1}}  - 1  +(H_0, m_{\varepsilon} H_0)_G   \right) \alpha_k^2  \\ 
\leq - \left(\dfrac{1}{2} - K_m \e^2  \right) \sum_{k=2}^{{k_0+1}} |\alpha_l|^2 - \left( 1 - \dfrac{1}{2^{{k_0+2}}} - K_m \e^2 \right) \sum_{k={k_0+2}}^{\infty} |\alpha_l|^2.
\end{multline}
Plugging \eqref{eq_uniq1} and \eqref{eq_uniq2} into \eqref{eq_uniq0}, taking small $\e,$ we infer that : 
\[
\dfrac{1}{4} \sum_{l=2}^{{k_0+1}} |\alpha_l|^2  + \dfrac{1}{4}\dfrac{1}{2^{{k_0+1}}} \sum_{l={k_0+2}}^{\infty}  |\alpha_l|^2  \leq 0,
\]
that is $\bar{\alpha}_2 = 0$

\medskip

{\em Existence.} We proceed now with constructing a candidate $\overline{\alpha}_2 = \mathcal L^{-1}\overline{Q}_2$ by finite-rank approximations. Let $K \geq k_0+1$ {be} arbitrary large. We look for $\overline{\alpha}_1^{(K)}
 = (\alpha^{(K)}_2,\ldots,\alpha^{(K)}_K) \in \mathbb R^{K-1}$ solving:
 \begin{equation} \label{eq_LK}
\left( \dfrac{1}{2^{k-1}}  - 1  +(H_0, m_{\varepsilon} H_0)_G   \right) \alpha^{(K)}_k 
 - \left( \sum_{l=2}^{K} \alpha^{(K)}_l H_l ,  m_{\varepsilon}  H_k\right)_G  = Q_k
 \quad \forall \, k = 2,\ldots,K,
\end{equation}
This turns out te be an invertible linear system. Indeed, let {us} denote by $\mathbb M^{(K)}$ the matrix 
implicitly involved by this system. For any $\overline{\beta} = (\beta_1,\ldots,\beta_k)$ we have
\[
\begin{aligned}
\overline{\beta} \cdot \mathbb M^{(K)} \cdot \overline{\beta}^{\top} = 
\sum_{k=2}^{K} \left[ \left( \dfrac{1}{2^{k-1}}  - 1  +(H_0, m_{\varepsilon} H_0)_G   \right) \beta_k 
 - \left( \sum_{l=2}^{K} \beta_l H_l ,  m_{\varepsilon}  H_k\right)_G \right] \beta_k
\end{aligned}
\]   
The restrictions on $\varepsilon$ required in the previous uniqueness proof entail with similar computations that (when $K \geq k_0$)
\[
\overline{\beta} \cdot \mathbb M^{(K)} \cdot \overline{\beta}^{\top}  \geq \dfrac{1}{4} \sum_{l=2}^{{k_0+1}} |\beta_l|^2 + \dfrac{1}{2^{k_0+2}} \sum_{l={k_0+2}}^{K} |\beta_l|^2.
\]
This entails that $Ker(\mathbb M^{(K)}) = \{0\}.$ We recover that there is a unique $\overline{\alpha}_2^{(K)}$ solution to \eqref{eq_LK}. Below we denote also $\overline{\alpha}_2^{(K)}$ the trivial extension of this solution and that is an element of $\ell^{2}(\mathbb N_2).$

\medskip

For arbitrary $K \geq k_0+2$, we observe again that (thanks again to our previous restrictions on $\varepsilon$):
\[
\dfrac{1}{2^{k_0+2}} |\overline{\alpha}_{2}^{(K)} |^2  \leq \overline{\alpha}_{2}^{(K)} \cdot \mathbb M^{(K)} \cdot \overline{\alpha}_{2}^{(K)} = \sum_{k=2}^{K} Q_k \alpha_k^{(K)}
\]
and then by a Cauchy-Schwarz ineuquality:
\[
|\overline{\alpha}_{2}^{(K)} | \leq 2^{k_0+2} |\overline{Q}_2| \qquad \forall \, K \geq k_0+2.
\]
We can then extract a weakly converging limit $\overline{\alpha}_2.$ Since the left-hand side of 
\eqref{eq_L} is continuous for the weak topology for arbitrary $k \geq 2,$ we obtain that $\overline{\alpha}_2$ is a candidate  to be $\mathcal L^{-1} \overline{Q}_2.$ Furthemore, it satisfies the required control 
\[
|\overline{\alpha}_2| \leq 2^{k_0+2} |\overline{Q}_2|.
\]
This ends the proof.

\subsection{Proofs of Lemmas \ref{lem_alpha1} and \ref{lem_Q}}

We analyze successively {the $\alpha_1$ equation and the mapping $\overline{\mathcal Q}.$ This order is chosen}
since $\alpha_1$ is involved in the definition of $\overline{\mathcal Q}$. 

\medskip

\begin{proof}[Proof of Lemma \ref{lem_alpha1}]

{As indicated above, we} consider the case $m_{1}^{(\varepsilon)} \neq 0$ only.  The case $m_{1}^{(\varepsilon)} =0$ yields with similar (but simpler) arguments.

\medskip

{{\em Proof of item (i).} Let  $\overline{\beta}_2 \in \ell^2({\mathbb N_2})$ with ${\|\overline{\beta}_2\|_{\ell^2(\mathbb N_2)}} \leq C \e^2$ and set 
\[
\begin{aligned}
D^{(\varepsilon)}&  := (H_0,  m_{\varepsilon}  H_0)_G   - (H_1,m_{\varepsilon} H_1)_G, \\
m_{\beta} & :=({\overline{\beta}_2}, \overline{m}^{(\varepsilon)})_{G} = \sum_{l=2}^{\infty} \beta_l m_{l}^{(\varepsilon)} ,\\
 \tilde{m}_{\beta} & := (\sum_{l=2}^{\infty} \beta_l H_l ,m_{\varepsilon}H_1)_{G}.
\end{aligned}
\]
With such notations equation \eqref{eq-alpha1} with associated with $\beta_2$ reads:
\begin{equation} \label{eq_alpha12}
m_{1}^{(\varepsilon)} \alpha_1^{2} + \left( D^{(\varepsilon)} + m_{\beta} \right)\alpha_1 = m_{1}^{(\varepsilon)} + \tilde{m}_{\beta}.
\end{equation}
In this equation, we notice that, by a standard 
integral convergence argument, we have that:
\begin{equation} \label{eq_Deps}
\begin{aligned}
D^{(\varepsilon)}&  =  \int_{\mathbb R} (1-|H_1(x)|^2) (m_{\varepsilon} - m_{\varepsilon}(0)) G(x){\rm d}x \\
& =   \varepsilon^2 \dfrac{m''(0)}{2} \int_{\mathbb R} ( 1- |H_1(x)|^2)x^2 G(x){\rm d}x + {O(\e^3)},   
\end{aligned}
\end{equation}
{where  the coefficient of $\e^2$ above is nonzero thanks to assumption \eqref{as:m-extremum} and since $\int_{\mathbb R} ( 1- |H_1(x)|^2)x^2 G(x){\rm d}x=\int_{\mathbb R} ( 1- x^2)x^2 G(x){\rm d}x\neq 0$.} On the other hand, the analysis of source terms
in Lemma \ref{lem_m} and Lemma \ref{lem_normmHk} entails:
\[
\begin{aligned}
| m_{\beta} |  & \leq \|\overline{\beta}_2\|_{\ell^2({\mathbb N_2})} \|m^{(\varepsilon)} \|_{L^2(\mathbb R,G(x){\rm d}x)} 
\leq K_m\varepsilon^2 \|\overline{\beta}_2\|_{\ell^2({\mathbb N_2})},
\\
|\tilde{m}_{\beta}| &  \leq  \|\overline{\beta}_2\|_{\ell^2({\mathbb N_2})}  \|m^{(\varepsilon)}  H_1 \|_{L^2(\mathbb R,G(x){\rm d}x)} 
\leq  K_m \varepsilon^2 \|\overline{\beta}_2\|_{\ell^2({\mathbb N_2})} ,\\
| m_{1}^{(\varepsilon)} | &  \leq K_m \varepsilon^3.
\end{aligned}
\]
Thus, {using $\|\overline{\beta}_2\|_{\ell^2(\mathbb N_2)}\leq C\e^2$}, we infer   that :
\[
\left[ D^{(\varepsilon)} + m_{\beta} \right]^2 + 4 m_1^{(\varepsilon)} ( m_1^{(\varepsilon)} + \tilde{m}_{\beta}) = \varepsilon^4 \left(\dfrac{m''(0)}{2} \int_{\mathbb R} ( 1- |H_1(x)|^2)x^2 G(x){\rm d}x\right)^{2} + O(\varepsilon^5),
\]
so that \eqref{eq_alpha12} admits two real roots for small $\varepsilon:$
\[
\begin{aligned}
\alpha_1  &=  - \dfrac{D^{(\varepsilon)}+ m_{\beta} }{2m_1^{(\varepsilon)}} 
  + \dfrac{{\Bigl( \left[ D^{(\varepsilon)} + m_{\beta} \right]^2 + 4 m_1^{(\varepsilon)} ( m_1^{(\varepsilon)} + \tilde{m}_{\beta})\Bigr)^{\frac 12}}}{2m_1^{(\varepsilon)}}\\
\tilde{\alpha}_1  &=  - \dfrac{D^{(\varepsilon)}+ m_{\beta} }{2m_1^{(\varepsilon)}} 
  - \dfrac{{\Bigl( \left[ D^{(\varepsilon)} + m_{\beta} \right]^2 + 4 m_1^{(\varepsilon)} ( m_1^{(\varepsilon)} + \tilde{m}_{\beta})\Bigr)^{\frac 12}}}{2m_1^{(\varepsilon)}}. 
\end{aligned}
\]
Introducing 
\[
\Delta_{\beta} := \dfrac{4 (m_1^{(\varepsilon)} ( m_1^{(\varepsilon)} + \tilde{m}_{\beta})) }{\left[ D^{(\varepsilon)} + m_{\beta} \right]^2}
\]
we remark that:
\[
\alpha_1 = \dfrac{(D^{(\varepsilon)} + m_{\beta})}{2m_1^{\varepsilon}} \left( \sqrt{1 + \Delta_{\beta}} - 1 \right)  \qquad 
\tilde{\alpha}_1 ={-} \dfrac{(D^{(\varepsilon)} + m_{\beta})}{2m_1^{\varepsilon}} \left( \sqrt{1 + \Delta_{\beta}} + 1 \right).
\]
With similar arguments as previously, we get that:
\[
(D^{(\varepsilon)} + m_{\beta}) = \dfrac{m^{''}(0)}{2} \int_{\mathbb R} ( 1- |H_1(x)|^2)x^2 G(x){\rm d}x \, \varepsilon^2 + O(\varepsilon^3)  
\]
while $|m_{1}^{\varepsilon}| \leq K_m \e^3$ and $\Delta_{\beta} \leq K_1 \varepsilon^2$ with a constant $K_1$ depending on $C.$
Since  the square root is $1$-lipschitz on $[1/2,3/2],$ we conclude:
\[
|\alpha_1| \leq  \dfrac{|D^{(\varepsilon)} + m_{\beta}|}{2|m_1^{\varepsilon}|} |\Delta_{\beta}| \leq 2\dfrac{  | m_1^{(\varepsilon)} + \tilde{m}_{\beta}| }{|D^{(\varepsilon)} + m_{\beta}| }   \leq K_m \varepsilon.
\]
while 
\[
|\tilde{\alpha}_1| \geq \dfrac{C_m}{\e}.
\]
We have then one possible root that is $\alpha_1.$

{\em Proof of item (ii).} 
}
Let {us} now consider $\overline{\beta}^{(1)}$ and $\overline{\beta}^{(2)}$ in $B_{\ell^2(\mathbb N_2)}(0,C\e^2).$
 We have, with obvious notations (and applying the linearity of $\overline{\beta} \to( m_{\beta},\tilde{m}_{\beta})$)
\[
\begin{aligned}
| \alpha_1^{(1)} - \alpha_1^{(2)}| & \leq
\dfrac{|m_{\beta}^{(1)} - m_{\beta}^{(2)}|}{2m_1^{\varepsilon}} \left( \sqrt{1 + \Delta^{(1)}_{\beta}} - 1 \right)  
+ 
\dfrac{(D^{(\varepsilon)} + m^{(2)}_{\beta})}{2m_1^{(\varepsilon)}} \left( \sqrt{1 + \Delta_{\beta}^{(1)}} - \sqrt{1 + \Delta_{\beta}^{(2)}} \right)   \\
& \leq K_{m} \varepsilon  \|\overline{\beta}^{(1)} - \overline{\beta}^{(2)} \|_{\ell^2(\mathbb N_2)}  +  
\dfrac{(D^{(\varepsilon)} + m^{(2)}_{\beta})}{2m_1^{(\varepsilon)}} |\Delta_{\beta}^{(1)} - \Delta_{\beta}^{(2)}|\\
\end{aligned}
\]
where:
\begin{multline*}
\dfrac{(D^{(\varepsilon)} + m^{(2)}_{\beta})}{2m_1^{(\varepsilon)}} |\Delta_{\beta}^{(1)} - \Delta_{\beta}^{(2)}| \\
\begin{aligned}
& \leq 2 
\left[ \dfrac{ | \tilde{m}_{\beta}^{(1)} - \tilde{m}_{\beta}^{(2)}|}{(D^{(\varepsilon)} + m^{(2)}_{\beta})} 
 + \dfrac{(|m_1^{(\varepsilon)}| + |\tilde{m}_{\beta}^{(2)}|) (2|D^{(\varepsilon)}| + |m_{\beta}^{(1)}| + |m_{\beta}^{(2)}|)}{|D^{(\varepsilon)} + m^{(2)}_{\beta}|(D^{(\varepsilon)} + m^{(1)}_{\beta})^2} |m_{\beta}^{(1)} - m_{\beta}^{(2)}| 
\right]\\
& \leq K_{m} \|\overline{\beta}^{(1)} - \overline{\beta}^{(2)} \|_{\ell^2(\mathbb N_2)} .
\end{aligned}
\end{multline*}
This concludes the proof. 
\qed  \end{proof}

\medskip

With these computations, we analyse now the behavior of $\overline{\mathcal Q}.$  

\medskip

\begin{proof}[Proof of Lemma \ref{lem_Q}]
Firstly, we fix $\varepsilon$ small so that the content of the previous proposition is satisfied.  Since $\overline{\mathcal Q}$ is made of  component multiplications and $\alpha_1,$ the formula defining its components are well-defined on $B_{\ell^2(\mathbb N_2)}(0,C\e^2).$ To prove that the result lies in $\ell^2(\mathbb N_2)$ we restrict to compute the value in $\overline{0}$ and lipschitz-properties of the mapping.  

\medskip

We have:
\[
\mathcal Q_k[\bar{0}] = \alpha_1[\bar{0}] (m_{\varepsilon} H_1, H_k)_G
\]
Hence, combining Lemma \ref{lem_m} and the previous lemma yields:
\[
|\overline{\mathcal Q}[\bar{0}]| \leq 
|\alpha_1[\bar{0}]| \|m_{\varepsilon} H_1\|_{L^2(\mathbb R,G(x){\rm d}x)}  \leq K_m \varepsilon^3.
\]

To compute the lipschitz  properties, we extract the bilinear part from the nonlinear part due to $\alpha_1.$ For $\overline{\beta} \in B_{\ell^2(\mathbb N_2)}(0,C\e^2)$ we split $\mathcal Q_k[\bar{\beta}]  = \mathcal  Q^{bl}_k[\bar{\beta},\bar{\beta}] + \mathcal R_k[\bar{\beta}]$ where:
\[
\mathcal R_k[\bar{\beta}] =  \f{1}{2\sqrt{2}} \alpha_1[\bar{\beta}]^2 \delta_{k=2} +  \dfrac{\sqrt{k!}}{2^k} \alpha_1[\bar{\beta}] \beta_{k-1} \delta_{k\geq 3} + \beta_k \alpha_1[\bar{\beta}] (H_1,m_{\varepsilon})_{G}
  - \alpha_1[\bar{\beta}](H_1,m_{\varepsilon} H_k)_G.
\]
Combining the lipschitz-properties of $\alpha_1$ and the size of $m_{\varepsilon}$ in $L^2(\mathbb R,G(x){\rm d}x)$ we
obtain:
\[
\|\overline{\mathcal R}[\bar{\beta}^{(1)}] - \overline{\mathcal R}[\bar{\beta}^{(2)}] \|_{\ell^2(\mathbb N_1)}
\leq  K_m \varepsilon \|\overline{\beta}^{(1)} -\overline{\beta}^{(2)}\|_{\ell^2(\mathbb N_2)}\,,
\quad \forall \, (\beta^{(1)},\beta^{(2)}) \in [B_{\ell^2(\mathbb N)}(0,C\e^2)]^2.
\]
While with similar arguments as previously, we have, for arbitrary
$\bar{\beta}^{(1)},\bar{\beta}^{(2)}$ in $\ell^{2}(\mathbb N_2):$
\[
\begin{aligned}
\|\overline{\mathcal Q}^{bl}[\bar{\beta}^{(1)},\bar{\beta}^{(2)}]\|_{\ell^2(\mathbb N_2)}^2
& \leq \sum_{k=2}^{\infty} \dfrac{k!}{4^k} \left| \sum_{l=2}^{k-2} \dfrac{\beta_l^{(1)} \beta_{k-l}^{(2)} }{\sqrt{l! (k-l)!}} \right|^2  + \sum_{k=2}^{\infty} |\sum_{l=2}^{\infty} \beta_k^{(1)} \beta_l^{(2)} (H_l,m_{\varepsilon}H_0)_G|^2\\
& \leq \sum_{k=2}^{\infty} \dfrac{1}{2^k} \sum_{l=2}^{k-2}|\beta^{(1)}_l|^2 |\beta^{(2)}_{k-l}|^2 + \sum_{k=2}^{\infty} |\beta^{(1)}_k|^2 \|\beta^{(2)}\|^2_{\ell^2(\mathbb N_2)} \|m_{\varepsilon}  H_0\|^2_{L^2(\mathbb R,G(x){\rm d}x)}\\
& \leq (1+ K_m \varepsilon^2)) \|\bar{\beta}^{(1)}\|_{\ell^2(\mathbb N_2)}^2 \|\bar{\beta}^{(2)}\|_{\ell^2(\mathbb N_2)}^2.
\end{aligned} 
\]

This concludes the proof.
\qed \end{proof}

\section{Proof of Theorem \ref{thm_longtime}}
We proceed with the anlysis of the time-dependent problem \eqref{eq_Nepsform}. Similarly to the stationary problem, {we use the fact that the problem reads more simply when formulated in terms of  the unknown} $h=N_{\e}/G.$ 
Hence, we rewrite the problem to be tackled:
\begin{equation} \label{eq_hevolution}
\left\{
\begin{aligned}
\partial_t h&  = (\tilde{T}[h] - h) -    \left( m_{\varepsilon}   -  \int_{\mathbb R} m_{\varepsilon}(x) h(x)G(x){\rm d}x \right) h, \\
h(0,\cdot) &= h_0,
\end{aligned}
\right.
\end{equation}
and we analyze the potential large-time properties of a solution to this problem. 
Our approach {to} Theorem \ref{thm_longtime} follows the classical scheme. Firstly, we analyze a Cauchy theory for \eqref{eq_hevolution} with a given initial data $h_0 \in L^2(\mathbb R,G(x) dx).$ We obtain existence and uniqueness of a solution for short times. In a second step, we show that for initial data close to the stationary state $\bar{h}_{\e} = N_{\e}/G,$ we have a uniform bound for the solution that entails this solution is global in time. We end-up the section by a proof of the stability {estimates.}

Before going into more precise statements and their associated proofs, we remind that we tackled a problem similar to \eqref{eq_Nepsform} in \cite{JG.MH.SM:23} up to normalizing the mass of the density. We proved that non-negative continuous initial data with enough bounded moments yield a unique solution that is continuous (in time and space) and has bounded moments (comparable to the initial data). Furthermore, this solution remains positive for all times. The main challenge in the first part of this section is to prove that we can construct a continuous solution that remains bounded in the weighted-$L^2$ space. 
The uniqueness result that we mention here entails that, if we assume further that the initial data is positive and continuous, the solution we construct is the unique one we constructed in our previous paper so that it remains positive as long as it exists.

\subsection{Cauchy theory for \eqref{eq_hevolution}}
In this part, we build a Cauchy theory on the basis of the following a priori estimate. Let $h$
be a solution on $(0,T)$. {We perform a dot-product between \eqref{eq_hevolution} and $h$} (according to the $L^2(\mathbb R,G(x) dx)$ scalar product). We obtain:
\begin{multline*}
\dfrac{1}{2} \dfrac{\textrm{d}}{\textrm{d}t} \|h(t,\cdot)\|^2_{L^2(\mathbb R,G(x) dx)} + 
\int_{\mathbb R} (1+m_{\e}(x)) |h(t,x)|^2 G(x) dx 
 \\
 = \left( \tilde{T}[h],h \right)_{G}  + \int_{\mathbb R} m_{\e}(x) h(t,x) G(x) dx \|h(t,\cdot)\|_{L^2(\mathbb R,G(x)dx)}^2.
\end{multline*}
We recall that we assume $1+m_{\e} \geq 0.$ Since $\tilde{T}$ is bilinear continuous ({see Lemma \ref{lem:T-cont}}), this entails via a standard Gronwall inequality that {the
$L^2(\mathbb R,G(x)dx)$-norm of the solution may only double on short times (see {\bf Section \ref{subsec_existence}} for related computations) so that:}
\begin{align}
& h \in L^{\infty}(0,T; L^2(\mathbb R,G(x) dx))  \label{eq_born1}\\
& \sqrt{1+m_{\e}}h \in L^2(0,T; L^2(\mathbb R,G(x)dx)) \label{eq_born2}. 
\end{align}
We point out that, with such a regularity, we have in particular
\[
 (\tilde{T}[h] - h) -    \left( m_{\varepsilon}   -  \int_{\mathbb R} m_{\varepsilon}(x) h(x)G(x){\rm d}x \right) h \in L^2((0,T) ; L^1(\mathbb R,G(x)dx)) \subset L^{1}_{loc}((0,T) \times \mathbb R).
\]
This entails that we may require 
\begin{equation} \label{eq_hevoldist}
\partial_t h  = (\tilde{T}[h] - h) -    \left( m_{\varepsilon}   -  \int_{\mathbb R} m_{\varepsilon}(x) h(x)G(x){\rm d}x \right) h \text{ in $\mathcal D'((0,T) \times \mathbb R)$},
\end{equation}
so that $\partial_t h \in L^2((0,T); L^1(\mathbb R,G(x)dx)).$ We have then {$h \in C([0,T] ; L^1(\mathbb R,G(x)dx))$ so  that initial conditions can be} matched in the sense:
\begin{equation} \label{eq_hevolinit}
h(0,\cdot) = h_0 \text{ in $L^1(\mathbb R,G(x)dx)$}
\end{equation}
Eventually, we give the following definition for our solutions:
\begin{definition} \label{def_cauchy}
Given $h_0 \in L^2(\mathbb R, G(x){\rm d}x)$ and $T >0,$ we call solution to \eqref{eq_hevolution} on $(0,T)$ any $h : (0,T) \times \mathbb R \to \mathbb R$ satisfying regularity requirements \eqref{eq_born1}-\eqref{eq_born2} and equation \eqref{eq_hevolution} in the sense of \eqref{eq_hevoldist}-\eqref{eq_hevolinit}.
\end{definition}

Our main result concerning the Cauchy theory reads:
\begin{theorem} \label{thm_cauchy}
Let $h_0\in L^2(\mathbb R, G(x){\rm d}x) {\cap C_b(\mathbb R)}$ be non-negative and such that 
\[
\|h_0\|_{L^2(\mathbb R,G(x)dx)} \leq M.
\] 
Then there exists $T_{M} >0$ depending only on $M$ such that, for arbitrary $T < T_M,$ there is a unique solution to \eqref{eq_hevolution} on $(0,T).$ {Moreover, this solution is non-negative.}
\end{theorem}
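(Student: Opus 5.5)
We will build the solution by a Picard-type fixed point in the space $X_T := \{h \in L^\infty(0,T;L^2(\mathbb R,G(x)dx)) : \sqrt{1+m_\e}\,h \in L^2(0,T;L^2(\mathbb R,G(x)dx))\}$, using the Duhamel formulation associated with the linear damping $-(1+m_\e)h$. Since $1+m_\e \geq 2^{-(k_0+1)}>0$ by \eqref{eq_defk0}, the multiplication semigroup $S(t)h = e^{-t(1+m_\e)}h$ is a contraction on $L^2(\mathbb R,G(x)dx)$, and it gains the weighted estimate $\int_0^T\|\sqrt{1+m_\e}S(t)h\|^2dt \leq \frac12\|h\|^2$. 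We write
\[
h(t)=S(t)h_0+\int_0^t S(t-s)\Big(\tilde T_1[h(s),h(s)] + \Big(\int_{\mathbb R} m_\e h(s)G\,dx\Big)h(s)\Big)ds=: \Phi[h](t).
\]
The bilinear term is controlled by the continuity of $\tilde T_1$ on $L^2(\mathbb R,G(x)dx)^2$ (Lemma \ref{lem:T-cont}), giving $\|\tilde T_1[h,h]\|\leq K\|h\|^2$. The nonlocal coefficient is bounded by $|\int m_\e hG|\leq \|m_\e\|_{L^2(G)}\|h\|_{L^2(G)}\leq K_m\e^2\|h\|$ thanks to Lemma \ref{lem_m}. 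Both nonlinearities are therefore quadratic in the $L^\infty_tL^2$ norm, and $\Phi$ maps the ball of radius $2M$ of $L^\infty(0,T;L^2(G))$ into itself and is a contraction there as soon as $T\,K(1+K_m)\,4M<1/2$. This defines $T_M$, which depends only on $M$. The energy identity written before Definition \ref{def_cauchy}, combined with Gronwall, gives the $L^2_t$ bound \eqref{eq_born2}. The identity \eqref{eq_hevoldist} follows by differentiating the Duhamel formula in the distributional sense, and \eqref{eq_hevolinit} follows from the time continuity of the mild formulation.

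For uniqueness within Definition \ref{def_cauchy}, we take two solutions $h_1,h_2$ with the regularity \eqref{eq_born1}--\eqref{eq_born2}. We test the difference equation against $h_1-h_2$ in $L^2(G)$; this is justified by a time mollification, using that $\partial_t h_i\in L^2_tL^1(G)$ together with the weighted $L^2$ integrability. This gives
\[
\frac{d}{dt}\|h_1-h_2\|^2\leq C\big(\|h_1\|+\|h_2\|\big)\big(1+K_m\e^2\big)\|h_1-h_2\|^2 ,
\]
where we used bilinearity of $\tilde T_1$ and dropped the term $-\int(1+m_\e)|h_1-h_2|^2G$, which has a good sign. Gronwall then yields $h_1=h_2$.

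The main obstacle is non-negativity, since the $L^2$ fixed point argument does not see the sign. We will try first to identify our solution with the one constructed in \cite{JG.MH.SM:23}, which is known to remain non-negative. That solution exists for non-negative continuous data with enough moments, and $h_0G$ with $h_0\in C_b\cap L^2(G)$ qualifies. It remains to check that it has the regularity \eqref{eq_born1}--\eqref{eq_born2}, after which uniqueness concludes. The difficulty is showing that it stays in the Gaussian-weighted space. If this identification is delicate, we will argue directly instead. We rewrite the equation as
\[
\partial_t h+\Big(1+m_\e-\int m_\e hG\Big)h=\tilde T_1[h,h].
\]
If $h\geq0$, the right-hand side is non-negative, since $T_1$ preserves positivity. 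We then run the Picard iteration $h^{n+1}$ defined through the integrating factor $\exp\big(-\int_0^t(1+m_\e-\int m_\e h^nG)\big)$, with source $\tilde T_1[h^n,h^n]\geq0$. Each iterate stays non-negative, and since the iteration converges in $X_T$ for $T<T_M$, the limit is non-negative as well.
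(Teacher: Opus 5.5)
Your proposal is correct, but it departs from the paper at each of its three steps.

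\textbf{Existence.} The paper uses a Galerkin scheme. It truncates the Hermite system \eqref{eq_alpha_evolution} to $K$ modes and derives the energy inequality \eqref{eq_energy} uniformly in $K$, using the bilinear bound of Lemma \ref{lem:T-cont}, $\|m_\e\|_{L^2(G)}$ and the symmetry of $\mathbb P^{(K)}$. It then passes to weak limits mode by mode and extends to $C^\infty_c$ test functions by density. You instead apply a contraction to the Duhamel formula built on the multiplication semigroup $e^{-t(1+m_\e)}$. This directly yields a mild solution in $C([0,T];L^2(\mathbb R,G(x)dx))$, which is the time continuity asserted in Theorem \ref{thm_longtime}; Definition \ref{def_cauchy} only provides continuity in $L^1(G)$. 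The weighted bound \eqref{eq_born2} also follows directly: use $\|\sqrt{a}\,e^{-ta}\|_{L^2_t}=2^{-1/2}$ with $a=1+m_\e(x)$, together with Young and Minkowski, and no energy identity for a mild solution needs justifying.

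\textbf{Uniqueness.} The paper uses the Duhamel representation to get $hG\in C_b([0,T]\times\mathbb R)$, then invokes \cite[Proposition 1]{JG.MH.SM:23}, which it applies to positive solutions. Your Gronwall estimate on $w=h_1-h_2$ is self-contained and needs neither a sign nor continuity of the data.

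\textbf{Non-negativity.} The paper identifies its solution with the positive one from \cite{JG.MH.SM:23}. That is your first option, and it leaves open the same point: that this solution stays in the Gaussian-weighted space. Your second option avoids the earlier paper altogether. What the Galerkin route buys in return is consistency with the Hermite framework, since the a priori estimates of Section \ref{sec:orbital-stabilty} are written on truncated systems with $K\to\infty$.

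\textbf{Two steps need tightening.}

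First, the energy identity for $w$ does not follow from $\partial_t h_i\in L^2_tL^1(G)$, because $L^1(G)$ does not pair with $w$. Use instead the triple $V\subset L^2(G)\subset V'$ with $V=\{\phi:\sqrt{1+m_\e}\,\phi\in L^2(G)\}$.
\begin{itemize}
\item By \eqref{eq_born2}, $(1+m_\e)w\in L^2(0,T;V')$.
\item The remaining terms lie in $L^\infty(0,T;L^2(G))$.
\item $C^\infty_c(\mathbb R)$ is dense in $V$, so the distributional equation upgrades to $\partial_t w\in L^2(0,T;V')$.
\end{itemize}
Lions--Magenes then gives $w\in C([0,T];L^2(G))$, so $w(0)=0$ in $L^2(G)$ by \eqref{eq_hevolinit}, and $\frac{d}{dt}\|w\|^2=2\langle\partial_t w,w\rangle_{V',V}$.

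Second, your positivity-preserving map is
\[
\Psi[g](t)=e^{\Lambda_g(t)}S(t)h_0+\int_0^t e^{\Lambda_g(t)-\Lambda_g(s)}S(t-s)\tilde T_1[g(s),g(s)]\,ds,\qquad \Lambda_g(t)=\int_0^t\int_{\mathbb R}m_\e\, g(s)G\,dx\,ds.
\]
This is not $\Phi$: the source of $\Phi$, $\tilde T_1[h,h]+(\int m_\e hG)h$, can be negative, since $m$ may take negative values here. So the convergence of $\Psi$ is not inherited from that of $\Phi$. You must check that $\Psi$ is itself a contraction on the non-negative part of the ball of radius $2M$. This holds for $T$ small depending only on $M$, because $|\Lambda_g(t)|\le 2MK_m\e^2 t$. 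Its fixed point is then a solution, and it coincides with yours by uniqueness. In fact you could use $\Psi$ from the start and obtain existence and non-negativity together.
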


What remains of this subsection is devoted to the proof of this theorem. We split this proof into two parts. Firstly, we obtain uniqueness relying on our previous contribution \cite{JG.MH.SM:23}, we postpone the existence proof to the end of this subsection. 

\subsubsection{Proof of Theorem \ref{thm_cauchy}. Uniqueness}
Let $h_0 \in L^2(\mathbb R,G(x)dx) \cap C_b(\mathbb R)$ be non-negative and $h$ any solution that we may construct on $(0,T)$ for some $T>0$. In particular, for all $k \geq 1,$ we have $n_0 := h_0G \in X_k$ where:
\[
X_k := \left\{ n \in L^1(\mathbb R) , \quad  \int_{\mathbb R} (1+ |x|)^k n(x) dx < \infty \right\}.
\]
and $n = hG \in L^{\infty}((0,T); X_k).$ Seeing \eqref{eq_Nepsform} as a differential system in $N_{\e}$ with source term $T_{1}[N_{\e}]$
 we infer that:
\[
\begin{aligned}
n(t,x)  =&  n_0(x)\exp\left[-t (1+m_{\e}(x)) + \int_0^{t}\int_{\mathbb R} m_{\e}(y)n(s,y) {\rm d}y {\rm d}s \right] \\
& + \int_0^{t} \exp\left[ - (t-s) (1+m_{\e}(x)) + \int_{s}^{t} \int_{\mathbb R} m_{\e}(y)n(u,y) {\rm d}y {\rm d}u\right] T_1[n(s,\cdot)] {\rm d}s. 
\end{aligned}
\]
At this point, we remark that $m_{\e}$ is continuous with $(1+m_{\e}) \geq 0$. Furthermore, by standard convolution arguments, we have that $T_1[n] \in L^{\infty}(0,T ; C_b(\mathbb R)).$ This entails that ${n} \in C_b([0,T] \times \mathbb R)$ solves \eqref{eq_Nepsform} in  $\mathcal D'((0,T))$ for all $x \in \mathbb R.$  We can then argue that \cite[Proposition 1]{JG.MH.SM:23} entails there is at most one such solution {that is positive}.

\subsubsection{Proof of Theorem \ref{thm_cauchy}. Existence}
\label{subsec_existence}
To obtain existence of a solution in the sense of {Definition \ref{def_cauchy}}, we proceed with a Galerkin method. For this, we plug that $h(t,x) = \sum_{k} \alpha_k(t) H_k(x)$ in \eqref{eq_hevolution}
and transform this system into the infinite differential system:
\begin{equation} \label{eq_alpha_evolution}
\left\{
\begin{aligned}
 \dot{\alpha}_k &=  \dfrac{\sqrt{k!}}{2^{k}} \sum_{l=0}^{k} \dfrac{\alpha_l \alpha_{k-l}}{\sqrt{l!(k-l)!}}- \alpha_k - \left(\sum_{l=0}^{\infty} \alpha_l H_l  \; ,  \; m_{\varepsilon} (H_k - \alpha_k H_0)  \right)_{G}    \qquad \forall \, k \geq 1,\\
 {\alpha}_k(0) &= \alpha_k^0
\end{aligned}
\right.
\end{equation}
where $\alpha_0 = 1$ is constant with time. 

\medskip

Fix $K \geq 1$ arbitrary large. By a standard Cauchy-Lipschitz argument, there exists  $T_K >0$ and a unique $(\alpha^{(K)}_k)_{k=1,\ldots,K} \in C^1([0,T_K])$ satisfying:
\[
\left\{
\begin{aligned}
 \dot{\alpha}^{(K)}_k &=  \dfrac{\sqrt{k!}}{2^{k}} \sum_{l=0}^{k} \dfrac{\alpha_l^{(K)} \alpha_{k-l}^{(K)}}{\sqrt{l!(k-l)!}}- \alpha_k^{(K)} - \left(\sum_{l=0}^{K} \alpha_l^{(K)} H_l  \; ,  \; m_{\varepsilon} (H_k - \alpha_k^{(K)} H_0)  \right)_{G}    \\
 {\alpha}^{(K)}_k(0) &= \alpha_k^0
\end{aligned}
\right.
\qquad \forall \, k =1\ldots,K,
\] 
with $\alpha_0^{(K)} =1.$ Let {us} denote $h^{(K)} = \sum_{k=0}^{K} \alpha^{(K)}_k H_k.$ Dot-multiplying this latter differential system with $(\alpha_k^{(K)})_{k=1,\ldots,K},$ we infer that:
\[
\begin{aligned}
\dfrac{1}{2}\dfrac{\textrm{d}}{\textrm{d}t} \left[\|h^{(K)}\|_{L^2(\mathbb R,G(x)dx)}^2 \right]
+ \|h^{(K)}\|_{L^2(\mathbb R,G(x)dx)}^2 = 
\sum_{k=0}^{K} \sum_{l=0}^{k}\dfrac{\sqrt{k!}}{2^{k}}  \dfrac{\alpha_l^{(K)} \alpha_{k-l}^{(K)} \alpha_k^{(K)}}{\sqrt{l!(k-l)!}}\\
 - ({\mathbb P}^{(K)} m_{\e}h^{(K)},h^{(K)})_{G} + \int_{\mathbb R} m_{\e}(y) h^{(K)}(t,y) G(y) {\rm d}y \|h^{(K)}\|_{L^2(\mathbb R,G(x)dx)}^2
\end{aligned}
\]
where ${\mathbb P}^{(K)}$ is the projection on the $K$-first modes in the Hermite expansion. Since this is a bounded symmetric mapping, we have: 
\[
({\mathbb P}^{(K)} m_{\e}h^{(K)},h^{(K)})_{G} = \int_{\mathbb R} m_{\e}(y) |h^{(K)}(t,y)|^2 G(y){\rm d}y.
\]
Eventually, we infer that:
\begin{multline*}
 \dfrac{1}{2}\dfrac{\textrm{d}}{\textrm{d}t} \left[\|h^{(K)}\|_{L^2(\mathbb R,G(x)dx)}^2 \right]
+ \int_{\mathbb R} (1+m_{\e}(y))|h^{(K)}(t,y)|^2 G(y){\rm d}y
\\
= 
\sum_{k=0}^{K} \sum_{l=0}^{k}\dfrac{\sqrt{k!}}{2^{k}}  \dfrac{\alpha_l^{(K)} \alpha_{k-l}^{(K)} \alpha_k^{(K)}}{\sqrt{l!(k-l)!}} +  \int_{\mathbb R} m_{\e}(y) h^{(K)}(t,y) G(y) {\rm d}y \|h^{(K)}\|_{L^2(\mathbb R,G(x)dx)}^2.
\end{multline*}
{To control} the right-hand side, we recall that $\tilde{T}$ defines a bounded bilinear mapping on $L^2(\mathbb R,G(x) dx)$ ({see Lemma \ref{lem:T-cont}}) so that there is a constant $C_{bl}$ independent of $K$ for which:
\[
\sum_{k=0}^{K} \sum_{l=0}^{k}\dfrac{\sqrt{k!}}{2^{k}}  \dfrac{\alpha_l^{(K)} \alpha_{k-l}^{(K)} \alpha_k^{(K)}}{\sqrt{l!(k-l)!}} \leq C_{bl} \|h^{(K)}\|_{L^2(\mathbb R,G(x) dx)}^3. 
\]
Similarly, by a standard Cauchy-Schwarz inequality:
\[
\int_{\mathbb R} m_{\e}(y) h^{(K)}(t,y) G(y) {\rm d}y \|h^{(K)}\|_{L^2(\mathbb R,G(x)dx)}^2 \leq \|m_{\e}\|_{L^2(\mathbb R,G(x)dx)} \|h^{(K)}\|^3_{L^2(\mathbb R,G(x)dx)}.
\]
Eventually, we obtain that:
\begin{multline} \label{eq_energy}
 \dfrac{1}{2}\dfrac{\textrm{d}}{\textrm{d}t} \left[\|h^{(K)}\|_{L^2(\mathbb R,G(x)dx)}^2 \right]
+ \int_{\mathbb R} (1+m_{\e}(y))|h^{(K)}(t,y)|^2 G(y){\rm d}y \\
\leq  \left( C_{bl} + \|m_{\e}\|_{L^2(\mathbb R,G(x)dx)} \right) \|h^{(K)}\|_{L^2(\mathbb R,G(x) dx)}^3.
\end{multline}
Consequently, with a classical blow-up alternative argument, we obtain that, under the further assumption that $\|h_0\|_{L^2(\mathbb R,G(x)dx)} \leq M,$ we can build a time $T_{M}$ depending only on 
$M$ such that $T_K = T_M$ and $\|h^{(K)}(t,\cdot)\|_{L^2(\mathbb R,G(x)dx)} \leq 2M$ for all $t \leq T_M.$ Integrating a last time \eqref{eq_energy} and using this control on the norm of $h^{(K)}$ we obtain also that:
\[
\int_0^{T_M} \int_{\mathbb R} (1+m_{\e}(y))|h^{(K)}(t,y)|^2 G(y){\rm d}y \leq M +  \left( C_{bl} + \|m_{\e}\|_{L^2(\mathbb R,G(x)dx)} \right) 8{T_M M^3}.
\]

\medskip

We can let now $K$ tend to infinity in this family of approximate solutions. For legibility we drop the index $M$ and assume these solutions are defined on $(0,T).$ With the computation above, we have that $h^{(K)}$ is bounded in $L^{\infty}(0,T;L^2(\mathbb R,G(x)dx))$ with $\sqrt{1+m_{\e}} h^{(K)}$ bounded in $L^2((0,T); L^2(\mathbb R,G(x)dx)).$ We can thus extract a subsequence, that we do not relabel, for which 
\[
\begin{aligned}
& h^{(K)} \rightharpoonup h && \text{in $L^{\infty}(0,T;L^2(\mathbb R,G(x)dx)) - w*$} \\
& \sqrt{1+m_{\e}}h^{(K)} \rightharpoonup \mathcal D && \text{ in    $L^2((0,T); L^2(\mathbb R,G(x)dx))-w$}.
\end{aligned}
\]
We note here that the mapping $h \mapsto \sqrt{1+m_{\e}}h$ is linear continuous $L^2(\mathbb R,G(x)dx) \to L^1(\mathbb R,G(x)dx).$ Because of the weak convergence of $h^{(K)},$ we conclude that $\mathcal D = \sqrt{1+m_{\e}}h.$
For fixed $k \in \mathbb N,$ we can then pass to the limit in the $k$ equation satisfied by $\alpha^{(K)}_k$ for $K \geq k.$ We infer that $h = \sum_{k} \alpha_k H_k$ with 
\[
\left\{
\begin{aligned}
 \dot{\alpha}_k &=  \dfrac{\sqrt{k!}}{2^{k}} \sum_{l=0}^{k} \dfrac{\alpha_l \alpha_{k-l}}{\sqrt{l!(k-l)!}}- \alpha_k - \left(\sum_{l=0}^{\infty} \alpha_l H_l  \; ,  \; m_{\varepsilon} (H_k - \alpha_k H_0)  \right)_{G}    \\
 {\alpha}_k(0) &= \alpha_k^0
\end{aligned}
\right.
\qquad \forall \, k \in \mathbb N.
\] 
In other words, we obtain that:
\[
\left\{
\begin{aligned}
\partial_t (h,H_k)_G &= (\tilde{T}[h],H_K) - (\sqrt{(1+ m_{\e})}h,\sqrt{1+m_{\e}}H_k) + \int_{\mathbb R}{m_\e(y)} h(\cdot,y) G(y){\rm dy} (h,H_k) \\
(h(0,\cdot),H_k) &= (h_0,H_k) 
\end{aligned}
\right.
\qquad \forall \, k \in \mathbb N.
\]
 {Notice that}
\[
C^{\infty}_c (\mathbb R) \subset  
\{\varphi \in L^2(\mathbb R,G(x)dx) \text{ s.t. } \sqrt{1+m_{\e}}\varphi \in L^2(\mathbb R,G(x)dx)\}
\]
in which the $(H_k)_{k\geq 1}$ are dense (since $m$ grows at most polynomially).  {Moreover, $\varphi\in C^{\infty}_c(\mathbb R)$ if and only if $\varphi(\cdot)/G(\cdot)\in C^{\infty}_c(\mathbb R)$.} We infer that this latter equation extends to all test functions in $C^{\infty}_c(\mathbb R).$ That means \eqref{eq_hevolution} holds in $\mathcal D'((0,T) \times \mathbb R)$
with an initial condition in $L^1(\mathbb R,G(x)dx).$ This ends the existence proof.

\subsection{Orbital stability of stationary states}
\label{sec:orbital-stabilty}

In the previous proof, we obtained local-in-time existence of a solution for arbitrary initial data. We obtain now that, for data sufficiently close to the Gaussian, the solution is global and remains close to the Gaussian. 
We start with a statement in the stable case:

\begin{proposition} \label{prop_orbstability_stable}
Let us assume that $m^{''}(0) >0.$ Given $\overline{C}_0 >0$ there exists ${\overline{C}_1}$ and ${\overline{C}_2}$ 
depending on $\overline{C}_0$ and $m$ such that the following property holds true.

Given $\varepsilon$ sufficiently small and a non-negative initial data $h_0 \in L^2(\mathbb R,G(x){\rm d}x)) \cap C_b(\mathbb R)$ such that 
\[
|\alpha_1^0| \leq \overline{C}_0 \varepsilon \qquad \|\overline{\alpha}_2^{0}\| \leq \overline{C}_0 \varepsilon^2 ,
\] 
there is a unique global solution $h$ to \eqref{eq_hevolution} that satisfies moreover:
\begin{equation} \label{eq_boundglobal1}
|\alpha_1(t)| \leq {\overline{C}_1} \varepsilon \qquad \|\overline{\alpha}_2(t)\| \leq {\overline{C}_2} \varepsilon^2  \qquad \forall \, t \geq 0.
\end{equation}
\end{proposition}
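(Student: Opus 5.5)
The plan is a continuation (bootstrap) argument built on the local Cauchy theory of Theorem \ref{thm_cauchy}. Since $\|h_0\|_{L^2(\mathbb R,G(x)dx)}^2 = 1+|\alpha_1^0|^2+\|\overline{\alpha}_2^0\|^2 \leq 2$ for $\varepsilon$ small, Theorem \ref{thm_cauchy} gives a non-negative solution on a time interval whose length depends only on this bound. Let $T^*$ be the supremal time on which the solution exists and satisfies \eqref{eq_boundglobal1} with constants $2\overline{C}_1,2\overline{C}_2$. On $[0,T^*)$ we will prove the improved bounds $\overline{C}_1,\overline{C}_2$. Since the $L^2(G)$ norm stays bounded by $2$ there, we may restart Theorem \ref{thm_cauchy} with a uniform existence time, so $T^*=\infty$. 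The a priori estimates are performed on the Galerkin system \eqref{eq_alpha_evolution}, or on the limit equation tested against $H_k$, which is legitimate by the regularity in Definition \ref{def_cauchy}.

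\textbf{Step 1: energy estimate for $\overline{\alpha}_2$.} Multiply the $k$-equation of \eqref{eq_alpha_evolution} by $\alpha_k$ and sum over $k\geq 2$, exactly as in the main proof of Section \ref{sec:conc2}. The linear part gives the dissipation $\sum_{k\ge2}(1-2^{1-k})\alpha_k^2 \geq \frac12\|\overline{\alpha}_2\|^2$.

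The term $-(m_\varepsilon \sum_{l\ge2}\alpha_lH_l,\sum_{k\ge 2}\alpha_kH_k)_G$ is the delicate one, because $m_\varepsilon$ is not small at large $x$ in $L^\infty$. We handle it as in Lemma \ref{lem_invL}, using the splitting at $k_0$ given by \eqref{eq_defk0}, to obtain a coercive bound
\[
\tfrac{d}{dt}\tfrac12\|\overline{\alpha}_2\|^2 \leq -c_0\|\overline{\alpha}_2\|^2 + \text{(remaining terms)},
\]
where $c_0=2^{-k_0-3}$. If $m\ge0$ near the relevant region this is even simpler, and the dissipation $\int(1+m_\varepsilon)|h|^2G$ from \eqref{eq_energy} can be used directly.

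The remaining terms are estimated as $RHS_2,\dots,RHS_5$ were, using Lemmas \ref{lem_m} and \ref{lem_normmHk}:
\begin{itemize}
\item the source $\sum_k m_k^{(\varepsilon)}\alpha_k$ is bounded by $K_m\varepsilon^2\|\overline{\alpha}_2\|$;
\item the $\alpha_1$-cross term is bounded by $K\varepsilon^3\|\overline{\alpha}_2\|$;
\item the quadratic term $\frac{1}{2\sqrt2}\alpha_1^2$ in the $k=2$ equation contributes $\frac{1}{2\sqrt2}(2\overline{C}_1)^2\varepsilon^2\|\overline{\alpha}_2\|$;
\item the other bilinear and cubic terms are bounded by $C(\overline{C}_1,\overline{C}_2)\varepsilon\|\overline{\alpha}_2\|^2$, via the boundedness of $\tilde T$ (Lemma \ref{lem:T-cont}).
\end{itemize}
For $\varepsilon$ small this yields $\frac{d}{dt}\|\overline{\alpha}_2\|\leq -\frac{c_0}{2}\|\overline{\alpha}_2\| + (K_m+\overline{C}_1^2)\varepsilon^2$. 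Hence $\|\overline{\alpha}_2(t)\|\leq \max(\overline{C}_0, 4(K_m+\overline{C}_1^2)/c_0)\varepsilon^2$, which defines $\overline{C}_2$ once $\overline{C}_1$ is fixed.

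\textbf{Step 2: the $\alpha_1$ equation (main obstacle).} The $k=1$ equation has no $O(1)$ damping, since $2^{0}-1=0$. Mirroring \eqref{eq-alpha1}, it reads
\[
\dot\alpha_1 = m_1^{(\varepsilon)} + \textstyle\sum_{l\ge2}\alpha_l(H_l,m_\varepsilon H_1)_G - \big[D^{(\varepsilon)}+\textstyle\sum_{l\ge2}\alpha_lm_l^{(\varepsilon)}\big]\alpha_1 - m_1^{(\varepsilon)}\alpha_1^2 .
\]
By \eqref{eq_Deps}, $D^{(\varepsilon)} = \varepsilon^2\frac{m''(0)}{2}\int(1-x^2)x^2G + O(\varepsilon^3)$, and $\int(1-x^2)x^2G = 1-3=-2$. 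Hence $-D^{(\varepsilon)}\approx -m''(0)\varepsilon^2$, which is negative precisely when $m''(0)>0$. This is where the sign hypothesis enters, and it produces a damping of size $\varepsilon^2$.

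The forcing is $O(\varepsilon^3)+K\varepsilon^2\|\overline{\alpha}_2\| = O(\varepsilon^3)$ by Lemmas \ref{lem_m} and \ref{lem_normmHk} together with Step 1. The perturbation of the damping coefficient is $O(\varepsilon^4)$, and the quadratic term is $K\varepsilon^3\cdot\varepsilon\,|\alpha_1| = O(\varepsilon^4)|\alpha_1|$. A scalar comparison argument therefore gives $\frac{d}{dt}|\alpha_1|\leq -\frac{m''(0)}{2}\varepsilon^2|\alpha_1| + K\varepsilon^3$, and so $|\alpha_1(t)|\leq \max(\overline{C}_0, 2K/m''(0))\,\varepsilon =: \overline{C}_1\varepsilon$.

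The point to check carefully is that $\overline{C}_1$ does not depend on $\overline{C}_2$, so there is no circularity. The forcing term $\sum_l\alpha_l(H_l,m_\varepsilon H_1)$ is $K\varepsilon^2\cdot 2\overline{C}_2\varepsilon^2=O(\varepsilon^4)$, which is negligible against $\varepsilon^3$ for small $\varepsilon$. So $\overline{C}_1$ is fixed first, then $\overline{C}_2$ from Step 1, and finally $\varepsilon$ is taken small depending on both. This closes the bootstrap and gives global existence together with \eqref{eq_boundglobal1}. Uniqueness follows from Theorem \ref{thm_cauchy}.
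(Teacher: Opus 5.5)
Your proposal is correct and follows the paper's own argument. It is a bootstrap in which $\alpha_1$ is damped at rate of order $\varepsilon^2$ by $D^{(\varepsilon)}$ (this is where $m''(0)>0$ enters) against $O(\varepsilon^3)$ forcing, so that $\overline{C}_1$ is fixed independently of $\overline{C}_2$. Meanwhile $\overline{\alpha}_2$ is damped at order one via the $k_0$-splitting against $O(\varepsilon^2)$ forcing from $\overline{m}_2^{(\varepsilon)}$ and $\alpha_1^2$, and $\varepsilon$ is then taken small.

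One small correction: the true $k=1$ equation is $\dot\alpha_1 = \big[D^{(\varepsilon)}+\sum_{l\ge2}\alpha_l m_l^{(\varepsilon)}\big]\alpha_1 + m_1^{(\varepsilon)}\alpha_1^2 - m_1^{(\varepsilon)} - \sum_{l\ge2}\alpha_l(H_l,m_\varepsilon H_1)_G$, which is the negative of what you wrote. Also $D^{(\varepsilon)}\approx -m''(0)\varepsilon^2$. These two sign slips cancel, so the damping rate you actually use is the right one.
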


{In this statement and from now on, we drop the index $\ell^2(\mathbb N_2)$ in the norm of sequences with index $2$ for legibility.} In the unstable case, we have the following:
\begin{proposition} \label{prop_orbstability_unstable}
Let us assume that $m^{''}(0) <0$   {and that $m$ is even}. Given $\overline{C}_0 >0$ there exists ${\overline{C}_2}$ 
depending on $\overline{C}_0$ and $m$ such that the following property holds true.

Given $\varepsilon$ sufficiently small and a non-negative even initial data $h_0 \in L^2(\mathbb R,G(x){\rm d}x)) \cap C_b(\mathbb R)$ such that 
\[
{\alpha_1^0=0,}  \qquad \|\overline{\alpha}_2^{0}\| \leq \overline{C}_0 \varepsilon^2 ,
\]
there is a unique global solution $h$ to \eqref{eq_hevolution} that satisfies moreover:
\begin{equation} \label{eq_boundglobal2}
{\alpha_1(t)=0,}   \qquad \|\overline{\alpha}_1(t)\| \leq {\overline{C}_2} \varepsilon^2  \qquad \forall \, t \geq 0.
\end{equation}
\end{proposition}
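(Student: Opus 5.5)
The plan is to work directly at the level of the Galerkin approximations $h^{(K)}=\sum_{k=0}^K\alpha_k^{(K)}H_k$ used in the existence proof of Theorem \ref{thm_cauchy}. For these we derive an energy inequality for $\overline{\alpha}_2^{(K)}$ that is uniform in $K$ and global in time. We then pass to the limit $K\to\infty$ and invoke the uniqueness part of Theorem \ref{thm_cauchy}. Non-negativity of the solution follows from the same theorem, since $h_0$ is non-negative and continuous.

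\textbf{Step 1: parity is preserved.} Since $m$ is even, $(H_l,m_\varepsilon H_k)_G=0$ whenever $l+k$ is odd, and $m^{(\varepsilon)}_k=0$ for $k$ odd. In the quadratic term $\sum_{l=0}^k\alpha_l\alpha_{k-l}/\sqrt{l!(k-l)!}$ with $k$ odd, one of $l$ and $k-l$ is always odd. Hence, in the truncated system \eqref{eq_alpha_evolution}, every right-hand side with odd $k$ vanishes when all odd coefficients vanish. By Cauchy--Lipschitz uniqueness, the odd modes stay $0$; in particular $\alpha_1^{(K)}(t)=0$. (Equivalently, at the PDE level, $h(-x)$ solves the same problem, and we conclude by uniqueness.)

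\textbf{Step 2: energy estimate for $k\geq2$.} With $\alpha_1=0$, we multiply the $k$-equation by $\alpha_k^{(K)}$ and sum over $2\le k\le K$. This gives
\[
\frac12\frac{d}{dt}|\overline{\alpha}_2^{(K)}|^2=-\overline{\alpha}_2^{(K)}\cdot\mathbb M^{(K)}\cdot\overline{\alpha}_2^{(K)}+\sum_k m_k^{(\varepsilon)}\alpha_k^{(K)}-\sum_k\mathcal Q^{bl}_k\,\alpha_k^{(K)} ,
\]
where $\mathbb M^{(K)}$ is exactly the finite matrix of the proof of Lemma \ref{lem_invL}, and $\mathcal Q^{bl}$ is the quadratic part of Lemma \ref{lem_Q} (all $\alpha_1$ terms drop out). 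From the coercivity established there, using only $1+m_\varepsilon\ge 2^{-(k_0+1)}$ and not the sign of $m''(0)$, we get $\overline{\alpha}_2\cdot\mathbb M^{(K)}\cdot\overline{\alpha}_2\ge c_0|\overline{\alpha}_2|^2$ with $c_0=2^{-(k_0+2)}$, for $\varepsilon$ small. Lemma \ref{lem_m} bounds the source term by $K_m\varepsilon^2|\overline{\alpha}_2|$. The bilinear estimate in the proof of Lemma \ref{lem_Q} bounds the last term by $(1+K_m\varepsilon^2)^{1/2}|\overline{\alpha}_2|^3$. Altogether,
\[
\frac{d}{dt}|\overline{\alpha}_2^{(K)}|\le -c_0|\overline{\alpha}_2^{(K)}|+K_m\varepsilon^2+2|\overline{\alpha}_2^{(K)}|^2 .
\]

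\textbf{Step 3: bootstrap and limit.} Set $\overline{C}_2=\max(\overline{C}_0,4K_m/c_0)$ and take $\varepsilon$ so small that $2\overline{C}_2\varepsilon^2\le c_0/2$. As long as $|\overline{\alpha}_2^{(K)}|\le \overline{C}_2\varepsilon^2$, the right-hand side is at most $-\tfrac{c_0}{2}|\overline{\alpha}_2^{(K)}|+K_m\varepsilon^2$, which is negative on the sphere $|\overline{\alpha}_2^{(K)}|=\overline{C}_2\varepsilon^2$. Hence the ball is invariant, and the finite-dimensional solution is global with this bound uniformly in $K$. Since $\|h^{(K)}\|^2_{L^2(G)}=1+|\overline{\alpha}_2^{(K)}|^2$ stays bounded, the time $T_M$ of Theorem \ref{thm_cauchy} is uniform. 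We can therefore pass to the limit on every $(0,T)$ as in the existence proof. Weak lower semicontinuity of the norm yields \eqref{eq_boundglobal2}, and by uniqueness the limit is the solution.

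The main obstacle I expect is Step 2. One must check that the coercivity of $\mathbb M^{(K)}$ from Lemma \ref{lem_invL}, which handles the unbounded multiplication by $m_\varepsilon$, really holds uniformly in $K$ for the truncated projected operator. One must also check that the term $\alpha_k\int m_\varepsilon hG$ is absorbed: it contributes $(m_0^{(\varepsilon)}+\sum_l\alpha_l m_l^{(\varepsilon)})|\overline{\alpha}_2|^2$, which is of size $K_m\varepsilon^2|\overline{\alpha}_2|^2$ and is already included in $\mathbb M^{(K)}$ or is harmless.
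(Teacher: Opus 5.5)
Your proposal is correct and is essentially the paper's argument. You get $\alpha_1\equiv 0$ by parity, then prove an energy estimate on $\overline{\alpha}_2$: the linear part is coercive thanks to the splitting at $k_0$ and $1+m_\varepsilon\ge 2^{-(k_0+1)}$ (no sign condition on $m''(0)$ is used), the $m_\varepsilon$-source is $O(\varepsilon^2)$, the remaining terms are cubic, and a bootstrap closes it.

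The differences are only organizational. You close the bootstrap as an invariant ball for the Galerkin system (the paper also notes that its estimates are to be written first on the truncated system) rather than by continuation of the limit solution. You get $\alpha_1^{(K)}\equiv 0$ from parity invariance of the truncated ODE rather than from PDE uniqueness. Your convention $-\overline{\alpha}_2\cdot\mathbb M^{(K)}\cdot\overline{\alpha}_2$ with a positive lower bound just compensates a sign slip in the paper's proof of Lemma \ref{lem_invL}: the quadratic form of $\overline{\mathcal L}$ is actually negative definite, so this is harmless.
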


What remains of this section is devoted to a unified proof of both statements. Since we constructed a local-in-time Cauchy theory for \eqref{eq_hevolution} in the space $L^2(\mathbb R,G(x) dx) \cap C(\mathbb R)$ that propagates that $h(t,\cdot) \geq 0,$ we focus herein only on a proof of the bounds \eqref{eq_boundglobal1} (resp. \eqref{eq_boundglobal2}). For this, we fix $C_0 >0$ and we consider either that $m$ satisfies $m^{''}(0) >0$ (stable case) or that $m^{''}(0) < 0$ and is even (unstable even case). We fix a non-negative initial data $h^0$ enjoying the bound:
\[
|\alpha_1^0| \leq \overline{C}_0 \varepsilon \qquad \|\overline{\alpha}_2^{0}\| \leq \overline{C}_0 \varepsilon^2 ,
\]
and that is moreover even in the unstable case. We assume a priori that, for some $T>0$, {which may depend on $\e$,} we have a unique solution on  $h$  on $(0,T)$ in the sense of {\bf Definition \ref{def_cauchy}} satisfying moreover \eqref{eq_boundglobal1} (resp. \eqref{eq_boundglobal2}) up to time $T$ for some ${\overline{C}_1}$ and ${\overline{C}_2}$ to be fixed. We prove in the following arguments that, under some restrictions on ${\overline{C}_2}$ and $\varepsilon$ (independent of the solution and $T$) we have that 
\begin{equation} \label{eq_continuation}
|\alpha_1(t)| \leq {\overline{C}_1}\varepsilon/2 \qquad  \|\overline{\alpha}_2(t)\| \leq {\overline{C}_2} \varepsilon^2/2 \qquad \forall \, t \in [0,T].
\end{equation}
Our result then follows via a standard continuation argument relying on our Cauchy theory.
We split now the proof in two parts corresponding to the two bounds to be obtained.

\medskip

From now on, we use similar notations as in the stationary case. Namely, we rewrite 
\eqref{eq_alpha_evolution} as a differential system on $\ell^2(\mathbb N):$ 
\begin{equation} \label{eq_alpha_evolution3}
\dfrac{\textrm{d}\overline{\alpha}_1}{{\textrm d}t}
= \overline{m}^{(\varepsilon)}_1 + \mathcal L^{(\varepsilon)} [ \overline{\alpha}_1] + 
\mathcal Q^{(\varepsilon)} [\overline{\alpha}_1]. 
\end{equation}
where $\mathcal L^{(\varepsilon)}$ and $\mathcal Q^{(\varepsilon)}$ stand for the respective extensions to the exponent $k=1$
of the $\overline{\mathcal L}$ and $\overline{\mathcal Q}$ in the previous section:
\beq
\label{def:L}
\mathcal L^{(\varepsilon)} [\overline{\alpha}]=
\left[ 
\left( \dfrac{1}{2^{k-1}}  - 1  +(H_0,  m_{\varepsilon} H_0)_G   \right) \alpha_k 
 - \left( \sum_{l=1}^{\infty} \alpha_l H_l ,  m_{\varepsilon} H_k\right)_G \\
\right]_{k\in \mathbb N_1}
\eeq
and
\[
\mathcal Q^{(\varepsilon)} [\overline{\alpha}]=
 \left[ \dfrac{\sqrt{k!}}{2^k} \sum_{l=1}^{k-1} \dfrac{\alpha_l \alpha_{k-l}}{\sqrt{l! (k-l)!}}  + \sum_{l=1}^{\infty} \alpha_k  \alpha_l  m_l^{(\varepsilon)}\right]_{k\in\mathbb N_1}.
\]
Without restriction, we enforce the first condition that ${\overline{C}_2} \geq 1$
(so that powers of ${\overline{C}_2}$ grow with the exponent) and $\e \leq 1$ ({so that   the converse holds}). {We also enforce the condition $\overline C_2>\overline C_1$.  }
 
\medskip

\noindent{\em Part 1. Computing a bound on $\alpha_1(t).$}
We start with the unstable even case. Thanks to the symmetries of the operator $T$ (and thus $\tilde{T}$) and the symmetries of $m$ we note that $h_{s}(t,x) = h(t,-x)$ is also
a solution to \eqref{eq_hevolution} in the sense of {\bf Definition \ref{def_cauchy}} with the same initial data $h^0.$ By uniqueness, we obtain that $h(t,\cdot)$ is even for all $t \in [0,T].$ This entails that:
\[
\alpha_1(t) = \int_{\mathbb R} h(t,x) H_1(x) G(x) dx = \int_{\mathbb R} h(t,x) x G(x) dx = 0\,, \qquad \forall \, t \in [0,T]. 
\] 
We have our expected {property}.

\medskip

The stable case requires more details. We multiply the first equation of \eqref{eq_alpha_evolution3} with $\alpha_1$ 
\[
\dfrac{1}{2} \dfrac{\textrm{d}}{\textrm{d}t} |{\alpha}_1|^2  = {m}_1^{(\varepsilon)} \alpha_1
+ \mathcal L^{(\varepsilon)}_1 [ \overline{\alpha}_1] \alpha_1  + 
\mathcal Q^{(\varepsilon)}_1 [\overline{\alpha}_1]\alpha_1 .
\]
For the first term, we argue again that $|m_{1}^{(\varepsilon)}| \leq  K_m \varepsilon^3$ (see Lemma \ref{lem_m}) to yield that
\begin{equation}  \label{eq_RHS11}
|m^{(\varepsilon)}_1 \alpha_1| \leq \dfrac{\eta}{2}\varepsilon^2 |\alpha_1|^2  + \dfrac{1}{2\eta} K_m \varepsilon^4 ,
\end{equation}
for arbitrary $\eta >0.$ As for the second term,  we note the following explicit formula
\[
\mathcal L_1^{(\varepsilon)}[\overline{\alpha}_1]\alpha_1 = \left( (H_0,m_{\varepsilon}H_0)_G  - (H_1,m_{\varepsilon} H_1) \right)|\alpha_1|^2 - \left( \sum_{l=2}^{\infty}\alpha_l H_l,m_{\varepsilon}\alpha_1 H_1 \right).
\]
in which we already computed (see \eqref{eq_Deps}) and since {$m^{''}(0)=1$,} that
\[
\left| (H_0,m_{\varepsilon} H_0)_G - (H_1,m_{\varepsilon} H_1)_G  {-} \varepsilon^2   D_0 \right|
\leq K_m \varepsilon^3 
\]
where 
\[
D_0 = \int_{\mathbb R} (1-x^2)x^2 G(x){\rm d}x {<} 0.
\]  
On the other hand, {thanks to}  Lemma \ref{lem_normmHk}, we have a constant $K_m$ for which: 
\beq
\label{inequality-amH}
\begin{array}{rl}
\left| \left(\sum_{l=2}^{\infty}\alpha_l H_l,  \alpha_1 m_{\varepsilon}H_1 \right)_{G} \right|&
 {\leq |\alpha_1 | \|\sum_{l=2}^{\infty}\alpha_l H_l\|_{L^2(\mathbb R,G(x){\rm dx})}\|m_{\varepsilon}H_1\|_{L^2(\mathbb R,G(x){\rm dx})}}\\
&\leq K_m \varepsilon^2  \|\overline{\alpha}_2\| \,  |\alpha_1| \leq K_m \left( \varepsilon^3 |\alpha_1|^2 + {\overline{C}_2}^2 \varepsilon^5\right). 
\end{array}
\eeq
where we applied that $\|\overline{\alpha}_2\| \leq {\overline{C}_2} \varepsilon^2.$
Consequently, for small $\varepsilon, $ we have
\begin{equation} \label{eq_RHS12}
\mathcal L_1^{(\varepsilon)}[\overline{\alpha}_1]\alpha_1 \leq - \varepsilon^2 \dfrac{3{|D_0|}}{{4}}  |\alpha_1|^2  +  K_m {\overline{C}_2}^2  \varepsilon^5. 
\end{equation}
Finally, we observe that:
\[
\mathcal Q^{(\varepsilon)}_1 [\overline{\alpha}_1]\alpha_1  = \sum_{l=1}^{\infty} \alpha_1^2 \alpha_l m_{l}^{(\varepsilon)}.
\]
Applying a Cauchy-Schwarz inequality and Lemma \ref{lem_m}, 
we infer again: 
\[
\left| \sum_{l=1}^{\infty} \alpha_l m_{l}^{(\varepsilon)} \right| \leq \|\overline{\alpha}_1\|_{\ell^2(\mathbb N_1)} K_m \varepsilon^2 \leq K_m  {\overline{C}_2} \varepsilon^3,
\]
and
\begin{equation} \label{eq_RHS13}
\mathcal Q^{(\varepsilon)}_1 [\overline{\alpha}_1]\alpha_1   \leq K_m  {\overline{C}_2} \varepsilon^3 |\alpha_1|^2  .
\end{equation}
Choosing $\eta$ sufficiently small in \eqref{eq_RHS11} (depending only on $m$) and combining with \eqref{eq_RHS12}-\eqref{eq_RHS13} we infer that, for $\varepsilon$ sufficiently small (depending on $m$ and $\overline{C}_2$) there holds:
\begin{equation} \label{eq_alpha1l2}
\dfrac{\textrm{d}}{\textrm{d}t} |\alpha_1|^2  + {\dfrac{ {|D_0|}}{2}} \varepsilon^2 |\alpha_1|^2  \leq  K_m \varepsilon^4.
\end{equation}
After time integration, this entails that, as long as $|\alpha_1| \leq  {\overline{C}_1} \varepsilon$ and $\|\overline{\alpha}_2\| \leq {\overline{C}_2} \varepsilon^2$ we have:
\begin{equation} \label{eq_boundalpha1-final}
|{\alpha}_1(t)|^2 \leq |\alpha_1(0)|^2+ {\dfrac{4 K_m\varepsilon^2}{{|D_0|}}}.  
\end{equation}
We have the expected bound of \eqref{eq_continuation} {by choosing}
\begin{equation} \label{eq_condC1_1}
{\overline{C}_1}= 2 \left( \overline{C}_0^2 + {\dfrac{4K_m}{{|D_0|}} }\right)^{\frac 12}.
\end{equation}

\noindent{\em Part 2. Computing bounds on $\overline{\alpha}_2$.} In this second part, we do not distinguish the stable case and the unstable even case. We write {\em a priori} estimates directly for the full unknown $\overline{\alpha}_2.$ The reader should note that these estimates should be written at first for $(\alpha_2,\ldots,\alpha_K)$ with arbitrary large $K$ to recover our estimates letting $K \to \infty$.

\medskip

We multiply now equations for $\overline{\alpha}_2$ with $\overline{\alpha}_2.$  This yields:
\[
\dfrac{1}{2} \dfrac{\textrm{d}}{\textrm{d}t} \|\overline{\alpha}_2\|^2  = (\overline{m}^{(\varepsilon)}_2, \overline{\alpha}_2)
+ (\overline{\mathcal L}_2^{(\varepsilon)} [ \overline{\alpha}_1] ,\overline{\alpha}_2 ) + 
( \overline{\mathcal Q}_2^{(\varepsilon)} [\overline{\alpha}_1], \overline{\alpha}_2)
\]
with ever the same convention that $\overline{\mathcal L}_2^{(\varepsilon)}$ (resp. $ \overline{\mathcal Q}_2^{(\varepsilon)}$) regroups the components of $\mathcal L^{(\varepsilon)}$ (resp. $\mathcal Q^{(\varepsilon)}$) with index $k \geq 2$.
We compute again a bound for  the right-hand side under the assumption that $|\alpha_1| \leq  {\overline{C}_1} \varepsilon$ and $\|\overline{\alpha}_2\|  \leq  {\overline{C}_2} \varepsilon^2$  and $\varepsilon$ sufficiently small. 

\medskip

For the first term, we have again by a Cauchy-Schwarz inequality:
\begin{equation}  \label{eq_bRHS11}
| \sum_{k = 2}^{\infty} m^{(\varepsilon)}_k \alpha_k|  \leq \dfrac{\eta}{2} \|\overline{\alpha}_2\|^2 +  \dfrac{K_m}{2\eta} \varepsilon^4,
\end{equation}
for arbitrary $\eta \in (0,1).$ Concerning the second term, we note that:
\[
  (\overline{\mathcal L}_2^{(\varepsilon)} [ \overline{\alpha}_1] ,\overline{\alpha}_2 )  = \sum_{k=2}^{\infty} \left( \dfrac{1}{2^{k-1}}  - 1  +(H_0,  m_{\varepsilon} H_0)_G   \right) |\alpha_k|^2 
- \left(\sum_{l=1}^{\infty} \alpha_l H_l,  m_{\varepsilon} \sum_{k=2}^{\infty} \alpha_k H_k \right)_G .
\]
At this point, we note that, thanks to \eqref{as:m-extremum}, we have a $k_0 \geq 2$ such that:
\[
m_{\e} +1 \geq  \dfrac{1}{2^{k_0}}.
\] 
We split then correspondingly the sum with respect to the chosen exponent $k_0.$ We obtain:
\[
\begin{aligned}
 {(\overline{\mathcal L}_2^{(\varepsilon)} [ \overline{\alpha}_1] ,\overline{\alpha}_2 )}= & \sum_{k=2}^{{k_0+1}}  \left( \dfrac{1}{2^{k-1}}  - 1  +(H_0,  m_{\varepsilon} H_0)_G   \right) |\alpha_k|^2  - \left(\sum_{l=2}^{{k_0+1}} \alpha_l H_l,  m_{\varepsilon} \sum_{k=2}^{{k_0+1}} \alpha_k H_k \right)_G  \\
&  - 2  \left(\sum_{l=2}^{{k_0+1}} \alpha_l H_l,  m_{\varepsilon} \sum_{{k=k_0+2}}^{\infty} \alpha_k H_k \right)_G   -  \left( \alpha_1 m_{\varepsilon} H_1,   \sum_{k=2}^{\infty} \alpha_k H_k \right)_G  \\
& + 
 \sum_{{l=k_0+2}}^{\infty}  \left( \dfrac{1}{2^{k-1}}  - 1  +(H_0,  m_{\varepsilon} H_0)_G   \right) |\alpha_k|^2 -    \left(\sum_{{l=k_0+2}}^{\infty} \alpha_l H_l,  m_{\varepsilon} \sum_{{l=k_0+2}}^{\infty} \alpha_k H_k \right)_G.
\end{aligned}
\]
We denote with $T_1,T_2,T_3$ the three lines on the right-hand side of this latter identity. For the last term, we remark that, by introducing 
${h}_{k_0} =  \sum_{{l=k_0+2}}^{\infty} \alpha_l H_l,$ there holds:
\[
\begin{aligned}
T_3  = & \left( \dfrac{1}{2^{k_0+1}}   + (H_0,m_{\varepsilon} H_0)_G \right) \int_{\mathbb R} |h_{k_0}|^2 G(x){\rm d}x - \int_{\mathbb R}(1+ m_{\varepsilon} )|h_{k_0}|^2 G(x){\rm d}x\\
 \leq & \left(  - \dfrac{1}{2^{k_0+1}} + (H_0,m_{\varepsilon} H_0)_G \right) \int_{\mathbb R} |h_{k_0}|^2 G(x){\rm d}x ,
\end{aligned}
\]
where we used the bound from below on $(1+m_{\e})$ to pass from the first to the second line. When $\varepsilon$ is chosen sufficiently small (depending on $m$), the above computations entail that:
\begin{equation} \label{eq_T3}
T_3 \leq - \dfrac{1}{2^{k_0+2}} \sum_{{k={k_0+2}}}^{\infty} |\alpha_k|^2.
\end{equation}
Concerning $T_1,$ we apply again Lemma \ref{lem_normmHk} to yield that:
\[
|(H_0,m_{\varepsilon} H_0)_G| + \sum_{l=2}^{{k_0+1}} \sum_{k=2}^{{k_0+1}} |(H_l,m_{\varepsilon} H_k)_G| \leq K_m \varepsilon^2.
\] 
Consequently, we obtain the bound:
\[
T_1 \leq - \dfrac{1}{2}\sum_{k=2}^{{k_0+1}}   |\alpha_k|^2  + K_m \varepsilon^2 \sum_{k=2}^{{k_0+1}}   |\alpha_k|^2,
\]
and, choosing $\varepsilon$ sufficiently small (with a threshold depending on $m$),  
\begin{equation} \label{eq_T1}
T_1 \leq - \dfrac{1}{4}\sum_{k=2}^{{k_0+1}}   |\alpha_k|^2 .
\end{equation}
Finally,  we have: 
\[
|T_2|  \leq  2 \left| ( \sum_{l21}^{{k_0+1}} \alpha_l  m_{\varepsilon} H_l, \sum_{k=2}^{\infty} \alpha_k H_k )_G\right| + \left| \left( \alpha_1 m_{\varepsilon} H_1,   \sum_{k=2}^{\infty} \alpha_k H_k \right)_G\right| .
\]
Applying again Cauchy-Schwarz inequalities and Lemma \ref{lem_normmHk} for $k\leq {k_0+1}$ we infer:
\[
\begin{aligned}
 \left| ( \sum_{ l=2}^{{k_0+1}} \alpha_l  m_{\varepsilon} H_l, \sum_{k=2}^{\infty} \alpha_k H_k )_G\right|  & \leq 
K_m \varepsilon^2 \left( \sum_{ l=2}^{{k_0+1}} |\alpha_l|^2 \right)^{\frac 12} \left( \sum_{l={k_0+2}}^{\infty} |\alpha_l|^2\right)^{\frac 12} \\
& \leq {K_m \varepsilon^2 \|\overline{\alpha}_2\|^2},\\ 
 \left| \left( \alpha_1 m_{\varepsilon} H_1,   \sum_{k=2}^{\infty} \alpha_k H_k \right)_G\right|  & \leq  |\alpha_1| \|\overline{\alpha}_2\| \|m_{\varepsilon} H_1\|_{L^2(\mathbb R,G(x){\rm d}x)}  \leq K_m \varepsilon^2 |\alpha_1| \|\overline{\alpha}_2\| .
\end{aligned}
\] 
Combining the two latter estimates and introducing the {\em a priori} bounds 
$|\alpha_1| \leq {\overline{C}_1} \varepsilon,$   $\|\overline{\alpha}_2\| \leq {\overline{C}_2} \varepsilon^2$  {and $\overline C_1\leq \overline C_2$} we conclude  that: 
\begin{equation} \label{eq_T2}
|T_2|   \leq K_m  {\overline{C}_2}^2 \varepsilon^5 .
\end{equation}
Eventually, we combine \eqref{eq_T3}-\eqref{eq_T1}-\eqref{eq_T2} to yield:
\begin{equation} \label{eq_bRHS12}
 (\overline{\mathcal L}_2^{(\varepsilon)} [ \overline{\alpha}_1] ,\overline{\alpha}_2 ) \leq  -\dfrac{1}{2^{k_0+2}}  \|\overline{\alpha}_2\|^2 + K_m {\overline{C}_2}^2 \varepsilon^5. 
\end{equation}

As for the nonlinear term, we have:
\[
( \overline{\mathcal Q}_2^{(\varepsilon)} [\overline{\alpha}_1], \overline{\alpha}_2) = \sum_{k=2}^{\infty}  \alpha_k \left[ \dfrac{\sqrt{k!}}{2^k} \sum_{l=1}^{k-1} \dfrac{\alpha_l \alpha_{k-l}}{\sqrt{l! (k-l)!}}  + \sum_{l=1}^{\infty} \alpha_k  \alpha_l  m_l^{(\varepsilon)}\right] ,
\]
that we split into $S_1+S_2$ where:
\[
S_1 =  \sum_{k=2}^{\infty} \sum_{l=1}^{k-1} \dfrac{1}{2^k} \sqrt{\begin{pmatrix} k \\ l \end{pmatrix}} \alpha_l \alpha_{k-l} \alpha_k  \,, 
\quad 
S_2 =  \sum_{l=1}^{\infty} \alpha_lm_{l}^{(\varepsilon)} \sum_{k=1}^{\infty} \alpha_k^2 .
\]
For $S_2,$ we use again Lemma \ref{lem_m} and the a priori bound on $|\alpha_1|$  {and $\|\overline \alpha_2\|$} to yield:
\[
|S_2| \leq\left(  \sum_{l=1}^{\infty} |\alpha_l|^2  \right)^{\frac 32} \left( \sum_{l=1}^{\infty} |m_l^{(\varepsilon)}|^2 \right)^{\frac 12} \leq 
\|m_{\e}\|_{L^2(\mathbb R,G(x)dx)} \left( |\alpha_1|^2+  {\|\overline \alpha_2\|^2} \right)^{\frac 32}
\]
and
\begin{equation} \label{eq_S2}
|S_2| \leq K_m {\overline{C}_2}^3\varepsilon^5 .
\end{equation}
Finally, for $S_1$ we use properties of binomials as in Appendix \ref{sec:appendix}, {Lemma \ref{lem:T-cont}:
\[
\begin{aligned}
| S_1 | & \leq \|\overline{\alpha}_2\| \left(  \sum_{k=2}^{\infty} \left[ \sum_{l=1}^{k-1} \dfrac{1}{2^k} \sqrt{\begin{pmatrix} k \\ l \end{pmatrix}} \alpha_l \alpha_{k-l} \right]^2  \right)^{\frac 12}  \leq  \|\overline{\alpha}_2\|  \left[  \sum_{k=2}^{\infty} \dfrac{1}{2^k} \sum_{l=1}^{k-1} \alpha_l^2 \alpha_{k-l}^2 \right]^{\frac 12}  \\
& \leq  \|\overline{\alpha}_1\|_{\ell^2(\mathbb N_1)}^2 \|\overline{\alpha}_2\|  . 
\end{aligned}
\]
In the regime prescribed by $\|\overline{\alpha}_2\| \leq {\overline{C}_2} \varepsilon^2,$ and   the already proven inequality \eqref{eq_boundalpha1-final} (that we have also trivially in the unstable even case), we conclude again that, 
\begin{equation} \label{eq_S1}
|S_1| \leq  {\overline{C}_2} \varepsilon^2 \left(|\alpha_1(0)|^2+ \dfrac{4 K_m\varepsilon^2}{{|D_0|}} +  {\overline{C}_2}^2 \varepsilon^4 \right).
\end{equation}
Finally,  combining \eqref{eq_S1} and \eqref{eq_S2} and choosing $\varepsilon$ sufficiently small (wrt $m$ and  {$\overline{C}_2$}):
\begin{equation} \label{eq_bRHS13}
|( \overline{\mathcal Q}_2^{(\varepsilon)} [\overline{\alpha_1}], \overline{\alpha}_2)| \leq 
 K_m  {\overline{C}_2}^3 \varepsilon^5 +   {\overline{C}_2} \varepsilon^2 \left(|\alpha_1(0)|^2+ \dfrac{4 K_m\varepsilon^2}{{|D_0|} } \right).
\end{equation}
Combining \eqref{eq_bRHS11} with $\eta$ sufficiently small (wrt $k_0$)  {and letting $K_\eta$ be a large enough constant such that $K_\eta\geq 3+1/(2\eta)$},  {\eqref{eq_bRHS12} and} \eqref{eq_bRHS13},  we conclude that: 
\begin{equation} \label{eq_balpha1l2}
\dfrac{\textrm{d}}{\textrm{d}t} \|\overline{\alpha}_2\|^2  + \dfrac{1}{2^{k_0+4}} 
\|\overline{\alpha}_2\|^2 \leq   {K_\eta}K_m \left(  {\overline{C}_2}^{3} \varepsilon^5 + \varepsilon^4 \right) +  {\overline{C}_2} \left(|\alpha_1(0)|^2+ \dfrac{4 K_m\varepsilon^2}{ {|D_0|}}\right) \varepsilon^2
 \end{equation}
and after time integration:
\[
\|\overline{\alpha}_2\|^2 \leq \|\overline{\alpha}_2(0)\|^2 +  2^{k_0+4}  \left(  {K_\eta} K_m \left(  {\overline{C}_2}^3 \varepsilon^5 + \varepsilon^4 \right) +   {\overline{C}_2} \left(|\alpha_1(0)|^2+ \dfrac{4 K_m\varepsilon^2}{ {|D_0|}}\right) {\e^2} \right)
\]
We have finally the expected bound of \eqref{eq_continuation} if: 
\begin{equation} \label{eq_condC1-2}
 {\overline{C}_2} \geq 2 \left[ \overline{C}_0^2 + 2^{k_0+4} \left(   {2K_\eta} K_m +  {\overline{C}_2} \left( \overline{C}_0^2 + \dfrac{4 K_m}{ {|D_0|}}\right) \right)\right]^{\frac 12}
\end{equation}
and $\varepsilon$ is chosen sufficiently small.

We observe that \eqref{eq_condC1_1} reduces to choosing ${\overline{C}_2}$
sufficiently large wrt $\overline{C}_0.$ In \eqref{eq_condC1-2}, we note that the right-hand side is sublinear in ${\overline{C}_2}$ so that \eqref{eq_condC1-2} is surely satisfied if ${\overline{C}_2}$ is chosen sufficiently large wrt $m$ and $\overline{C}_0.$  We can then match the two conditions with ${\overline{C}_2}$ sufficiently large. This ends up the proof.

\subsection{Asymptotic stability of stationary states (Proof of Theorem \ref{thm_longtime})}

We {conclude} this part with a proof of our main result, {Theorem \ref{thm_longtime}}. 
For this, we fix ${C}.$ Applying {\bf Proposition \ref{prop_orbstability_stable} } in the stable case, or {\bf Proposition \ref{prop_orbstability_unstable}} in the unstable even case, we obtain that, for {any} non-negative bounded and concentrated initial data $h_0,$ {there exists} a unique global solution $h(t,\cdot)$ to \eqref{eq_hevolution} that  {satisfies, since $\overline{C}_1\leq \overline C_2$},
\[
|\alpha_1(t)| \leq {\overline{C}_2} \varepsilon \qquad 
\|\overline{\alpha}_2(t)\| \leq {\overline{C}_2} \varepsilon^2 \qquad \forall \,t \geq 0.
\]  
Up to {increasing} the size of ${\overline{C}_2}$ and {restricting} the size of $\varepsilon,$ {{\bf Theorem \ref{thm_stat}} guarantees also}   the existence of a unique stationary solution $\bar{h}$ to \eqref{eq_hevolution}
in $L^2(\mathbb R,G(x){\rm d}x)$ whose coefficient in the Hermite basis satisfy:
\begin{equation} \label{eq_apriori}
|{\alpha}^{\infty}_1 | \leq {\overline{C}_2} \varepsilon \qquad 
\|\overline{\alpha}^{\infty}_2\| \leq {\overline{C}_2} \varepsilon^2.
\end{equation}
 We may then compute the difference $h(t) - \bar{h}$ and especially its coefficients in the Hermite basis $(\beta_{k})_{k \in \mathbb N}.$ {We recall that $\beta_0=0$ so that we restrict to the coefficients larger than $1$ in our computations.} {\bf Theorem \ref{thm_longtime}}
is then a consequence to the following statements. In the stable case we show:
\begin{proposition}
{Assume that $m^{''}(0)>0$. Then,} there exists $K^{(0)}$ and $K_m^{(0)} >0$ depending both only on $m$ such that:
\[
\|\overline{\beta}_1(t)\|_{\ell^2(\mathbb N_1)} \leq K^{(0)}\|\overline{\beta}(0)\|_{\ell^2(\mathbb N_1)} \exp(-K_m^{(0)}\varepsilon^2t) \qquad \forall \, t \geq 0. 
\]
\end{proposition}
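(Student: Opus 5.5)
Since $h$ and $\bar h$ both solve \eqref{eq_hevolution} and $\beta_0=0$, I would subtract the two Hermite systems \eqref{eq_alpha_evolution3} and write the equation for $\overline{\beta}_1=\overline{\alpha}_1(t)-\overline{\alpha}^\infty_1$ as
\[
\dfrac{\textrm{d}\overline{\beta}_1}{\textrm{d}t}=\mathcal L^{(\varepsilon)}[\overline{\beta}_1]+\Big(\mathcal Q^{(\varepsilon)}[\overline{\alpha}_1]-\mathcal Q^{(\varepsilon)}[\overline{\alpha}^\infty_1]\Big).
\]
The source term $\overline{m}^{(\varepsilon)}_1$ cancels. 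The quadratic difference is a bilinear form in $(\overline{\beta}_1,\overline{\alpha}_1+\overline{\alpha}^\infty_1)$. Both $\overline{\alpha}_1$ and $\overline{\alpha}^\infty_1$ satisfy $|\alpha_1|\le \overline C_2\varepsilon$ and $\|\overline\alpha_2\|\le \overline C_2\varepsilon^2$, by Proposition \ref{prop_orbstability_stable} and \eqref{eq_apriori}. So the quadratic terms are perturbations of size $O(\varepsilon)$ times $\overline{\beta}_1$.

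The key step is to avoid a plain $\ell^2$ energy estimate. A plain estimate fails because the component $\beta_1$ is only damped at rate $\varepsilon^2|D_0|$, through $(H_0,m_\varepsilon H_0)_G-(H_1,m_\varepsilon H_1)_G\approx\varepsilon^2D_0$. Meanwhile the bilinear term $\frac{\sqrt{k!}}{2^k}\alpha_1\beta_{k-1}$ couples $\beta_1$ into $\beta_2$ at order $\varepsilon$. I would therefore use a weighted functional
\[
\Phi=|\beta_1|^2+\lambda\|\overline{\beta}_2\|^2 .
\]
For the $\beta_1$ equation, the cross terms are $(\sum_{l\ge2}\beta_lH_l,m_\varepsilon H_1)_G\beta_1$, which is $\le K_m\varepsilon^2|\beta_1|\|\overline\beta_2\|$ by Lemma \ref{lem_normmHk}. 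The nonlinear term $\mathcal Q_1$ involves $\sum_l\alpha_lm_l^{(\varepsilon)}$ and gives $O(\varepsilon^3)$ factors times $|\beta_1|(|\beta_1|+\|\overline\beta_2\|)$. Hence
\[
\tfrac12\tfrac{\textrm d}{\textrm dt}|\beta_1|^2\le-\tfrac{3|D_0|}{4}\varepsilon^2|\beta_1|^2+K\varepsilon^2|\beta_1|\|\overline\beta_2\|.
\]
For $\overline{\beta}_2$, I would repeat Part 2 of the orbital-stability proof with the $k_0$-splitting \eqref{eq_T3}--\eqref{eq_T1}. This yields $-2^{-(k_0+2)}\|\overline\beta_2\|^2$ from the linear part. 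The nonlinear difference, bounded as for $S_1,S_2$ via Lemma \ref{lem:T-cont}, gives $K\varepsilon(|\beta_1|+\|\overline\beta_2\|)\|\overline\beta_2\|$. The dangerous contributions are $\alpha_1\beta_1$-type terms appearing in the $k=2$ component, which are bounded by $K\varepsilon|\beta_1|\|\overline\beta_2\|$, and the term $(\beta_1m_\varepsilon H_1,\cdot)$, which is bounded by $K\varepsilon^2|\beta_1|\|\overline\beta_2\|$.

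The main obstacle is closing these estimates: the cross term $\varepsilon|\beta_1|\|\overline\beta_2\|$ must be absorbed against the weak damping $\varepsilon^2|\beta_1|^2$. By Young,
\[
\varepsilon|\beta_1|\|\overline\beta_2\|\le c\|\overline\beta_2\|^2+\tfrac{\varepsilon^2}{4c}|\beta_1|^2 ,
\]
so with $c\sim2^{-(k_0+4)}$ the remaining $\varepsilon^2|\beta_1|^2$ term has a constant of order $2^{k_0}\overline C_2^2$. This is not small compared with $|D_0|$ unless it is handled more carefully. I would first try to show that the $O(\varepsilon)$ cross term actually comes only from $\alpha_1$ in the $k=2$ component, namely $\frac{1}{\sqrt2}(\alpha_1+\alpha_1^\infty)\beta_1$. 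That term would be removed by a change of unknown $\tilde\beta_2=\beta_2-c_\varepsilon\beta_1$, a near-identity quasi-linearization: projecting out the fast $k=2$ relaxation gives $\beta_2\approx\sqrt2(\alpha_1+\alpha^\infty_1)\beta_1$ adiabatically, and this feeds back into the $\beta_1$ equation only at order $\varepsilon^2\cdot\varepsilon$. Alternatively, I would choose the weight $\lambda=\varepsilon$ in $\Phi$. Then the cross term contributes $\lambda\varepsilon|\beta_1|\|\overline\beta_2\|\le\frac{\lambda}{8}2^{-k_0}\|\overline\beta_2\|^2+K\lambda\varepsilon^2|\beta_1|^2$, which is $O(\varepsilon^3)|\beta_1|^2$ and is absorbed. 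The term $K\varepsilon^2|\beta_1|\|\overline\beta_2\|$ from the $\beta_1$ equation is $\le\frac{|D_0|}{4}\varepsilon^2|\beta_1|^2+K'\varepsilon^2\|\overline\beta_2\|^2$, and $K'\varepsilon^2\ll\lambda 2^{-k_0}=\varepsilon2^{-k_0}$.

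With $\lambda=\varepsilon$, I expect $\frac{\textrm d}{\textrm dt}\Phi\le-K_m^{(0)}\varepsilon^2\Phi$. Gronwall then gives $\Phi(t)\le\Phi(0)e^{-K_m^{(0)}\varepsilon^2t}$. Converting $\Phi$ back to $\|\overline\beta_1\|^2$ loses a factor $\varepsilon^{-1}$, so I would instead use $\lambda=1$ for the $\overline\beta_2$ tail once the $k=2$ coupling has been treated by the change of unknown. This yields the stated constant $K^{(0)}$ independent of $\varepsilon$.
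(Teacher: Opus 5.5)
Your final route (the near-identity change of unknown, then an unweighted energy) is viable. But it is heavier than the paper's, and the obstacle that pushes you to it does not actually exist.

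\textbf{What the paper does.} It uses exactly the two inequalities you derive, in the following form.
\begin{itemize}
\item For the slow mode, the back-coupling $K_m\e^2|\beta_1|\|\overline\beta_2\|$ is split as $K_m(\e^3|\beta_1|^2+\e\|\overline\beta_2\|^2)$. This gives $\frac12\frac{d}{dt}|\beta_1|^2+\frac{|D_0|}{2}\e^2|\beta_1|^2\le K_m\e\|\overline\beta_2\|^2$.
\item For the tail, Young's inequality on the $O(\e)$ cross term gives $\frac12\frac{d}{dt}\|\overline\beta_2\|^2+2^{-(k_0+4)}\|\overline\beta_2\|^2\le\tilde K_m\e^2|\beta_1|^2$. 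Here $\tilde K_m$ is large (it contains your $2^{k_0}\overline C_2^2$) but independent of $\e$.
\end{itemize}
The paper then uses the functional $\frac{4\tilde K_m}{|D_0|}|\beta_1|^2+\|\overline\beta_2\|^2$. The large but fixed weight on $|\beta_1|^2$ turns its damping into $2\tilde K_m\e^2|\beta_1|^2$, which beats the $\tilde K_m\e^2|\beta_1|^2$ coming from the tail. The weighted source $\frac{4\tilde K_m}{|D_0|}K_m\e\|\overline\beta_2\|^2$ is still absorbed by the $O(1)$ tail damping once $\e$ is small.

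\textbf{Where your diagnosis goes wrong.} In your notation the paper's choice is $\lambda=|D_0|/(4\tilde K_m)$. You only tested the extremes $\lambda=1$ and $\lambda=\e$. In fact any fixed $\lambda\le|D_0|/(4\tilde K_m)$ works, for $\e$ small depending on $\lambda$. The reason is that the product of the two off-diagonal couplings, $O(\e)\cdot O(\e^2)$, is small compared with the product of the damping rates, $\e^2\cdot O(1)$. With $\lambda$ fixed, passing from $\Phi$ back to $\|\overline\beta_1\|^2$ costs only an $\e$-independent constant.

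Even your $\lambda=\e$ computation can be salvaged without any change of unknown. It already gives $|\beta_1(t)|^2\le\|\overline\beta_1(0)\|^2e^{-c\e^2t}$. Inserting this into the tail inequality and integrating yields $\|\overline\beta_2(t)\|^2\lesssim\|\overline\beta_1(0)\|^2e^{-c\e^2t}$ with an $\e$-independent constant.

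\textbf{On the change of unknown.} Consider $\tilde\beta_2=\beta_2-c_\e\beta_1$. You are right that the only $O(\e)$ coupling of $\beta_1$ into the tail is the $k=2$ term; its coefficient is $\frac{\sqrt{2}}{4}(\alpha_1+\alpha_1^\infty)$. Two points are missing from your sketch:
\begin{itemize}
\item You treat $c_\e$ as a constant, but it depends on $t$ through $\alpha_1(t)$. You need $\dot\alpha_1=O(\e^3)$, which follows from the $\alpha_1$ equation under the a priori bounds.
\item The substitution creates new couplings that must be tracked, e.g. $\alpha_1\beta_2=\alpha_1\tilde\beta_2+O(\e^2)\beta_1$ in the $k=3$ component.
\end{itemize}
All of these are harmless, so your route closes. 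What it buys is a near-diagonalization that exhibits the effective slow rate $\e^2D_0+O(\e^3)$. The paper's fixed-weight functional reaches the same estimate with none of this bookkeeping.
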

Item \textit{(i)} of {\bf Theorem \ref{thm_longtime}} is a consequence to this statement. 
 Item {\textit{(ii)}} of {\bf Theorem \ref{thm_longtime}} is  a consequence to the following proposition:
\begin{proposition}
Assume that $m^{''}(0)<0$ and that $m$ is even. Then, for any even initial data there exists $K_m$ depending only on $m$ for which:
 \[
\|\overline{\beta}_1(t)\|_{\ell^2(\mathbb N_1)} \leq \|\overline{\beta}_1(0)\|_{\ell^2(\mathbb N_1)} \exp(-K_m t) \qquad \forall \, t \geq 0. 
\]
\end{proposition}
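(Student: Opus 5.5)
The plan is to derive the evolution equation satisfied by the difference $\beta_k = \alpha_k(t) - \alpha^\infty_k$ in the Hermite basis and run an energy estimate in $\ell^2(\mathbb N_1)$. Since $m$ is even and the initial data is even, the solution $h(t,\cdot)$ is even for all times, as shown in Part~1 of the orbital stability argument. The stationary solution is also even: by uniqueness in $\mathcal U^{(\varepsilon)}(C)$ from Theorem~\ref{thm_stat}, its reflection is again a stationary solution in the same set. Hence $\alpha_1 = \alpha_1^\infty = 0$, so $\beta_1 = 0$, and every odd coefficient vanishes as well. So only $\overline{\beta}_2$ has to be controlled.

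Subtracting the stationary version of \eqref{eq_alpha_evolution3} from the time-dependent one gives
\[
\dfrac{\textrm d\overline{\beta}_2}{\textrm dt} = \overline{\mathcal L}^{(\varepsilon)}_2[\overline{\beta}] + \overline{\mathcal Q}^{(\varepsilon)}_2[\overline{\alpha}] - \overline{\mathcal Q}^{(\varepsilon)}_2[\overline{\alpha}^\infty],
\]
where the source term $\overline m^{(\varepsilon)}$ cancels. I then take the scalar product with $\overline{\beta}_2$, working first on Galerkin truncations as in Part~2 of the orbital stability proof.

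The linear part is treated exactly as in the splitting into $T_1, T_2, T_3$, using the index $k_0$ with $1+m_\varepsilon \geq 2^{-k_0}$. Since $\beta_1 = 0$, the cross term involving $\alpha_1 m_\varepsilon H_1$ is absent. This gives
\[
(\overline{\mathcal L}_2[\overline{\beta}],\overline{\beta}_2) \leq -2^{-k_0-3}\|\overline{\beta}_2\|^2
\]
for $\varepsilon$ small. The point here is that this estimate is homogeneous in $\overline{\beta}_2$, with no $\varepsilon^5$ remainder.

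For the nonlinear difference, I write it as a bilinear expression:
\[
\mathcal Q[\overline{\alpha}] - \mathcal Q[\overline{\alpha}^\infty] = \mathcal Q^{bl}[\overline{\beta},\overline{\alpha}] + \mathcal Q^{bl}[\overline{\alpha}^\infty,\overline{\beta}].
\]
This covers both the convolution part and the $\alpha_k\sum_l \alpha_l m_l^{(\varepsilon)}$ part. The binomial estimate of Lemma~\ref{lem:T-cont}, as used for $S_1$, together with Lemma~\ref{lem_m} for the $m$-part, then bounds its pairing with $\overline{\beta}_2$ by
\[
K\big(\|\overline{\alpha}\| + \|\overline{\alpha}^\infty\|\big)\|\overline{\beta}_2\|^2 \leq K\,\overline C_2\,\varepsilon^2 \|\overline{\beta}_2\|^2 .
\]
This step uses the uniform bounds $\|\overline{\alpha}_2(t)\| \leq \overline C_2\varepsilon^2$ and \eqref{eq_apriori}, together with $\alpha_1 = 0$. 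Absorbing this term for $\varepsilon$ small yields
\[
\dfrac{\textrm d}{\textrm dt}\|\overline{\beta}_2\|^2 + 2^{-k_0-3}\|\overline{\beta}_2\|^2 \leq 0 .
\]
Grönwall's lemma then gives the claim with $K_m = 2^{-k_0-4}$, which depends only on $m$.

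I expect the main obstacle to be the nonlinear difference. Specifically, one has to check that the convolution term $\frac{\sqrt{k!}}{2^k}\sum_l \alpha_l\alpha_{k-l}/\sqrt{l!(k-l)!}$, with indices $l \geq 2$ and $k-l \geq 2$ once odd modes vanish, really produces a bound with a small coefficient times $\|\overline{\beta}_2\|^2$. A bound of the form $\|\overline{\beta}\|\cdot\|\overline{\beta}_2\|$ with an order-one constant would not suffice. The smallness comes from $\|\overline{\alpha}\|, \|\overline{\alpha}^\infty\| = O(\varepsilon^2)$, and the $2^{-k}$ weights make the bilinear form bounded on $\ell^2$. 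A secondary point is to pass from the truncated energy estimate to the limit, which I would do as in Section~\ref{subsec_existence}.
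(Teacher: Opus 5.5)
Your proposal is correct and follows essentially the paper's argument. You subtract the stationary system, use evenness to get $\beta_1\equiv 0$, and run the $\ell^2(\mathbb N_2)$ energy estimate, with the $k_0$-splitting of the linear part and the binomial/Cauchy--Schwarz bound on the bilinear part. Two details differ slightly from the paper's unified estimate, which simply drops the $|\beta_1|^2$ term in the even case: you justify $\alpha_1^{(\infty)}=0$ explicitly via uniqueness in $\mathcal U^{(\varepsilon)}(C)$, and you use $\beta_1=0$ from the start to get an $O(\varepsilon^2)$ coefficient on the nonlinear term. Neither is a different route.
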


In what follows, we give again a unified proof of both propositions.
Substracting from the equation for $\overline{\alpha}_1$ the stationary solution $\overline{\alpha}_1^{(\infty)},$ we obtain that $\overline{\beta}_1$ is a solution to the differential system:
\begin{equation} \label{eq_beta_evol}
\dfrac{\textrm{d}\overline{\beta}_1}{{\textrm d}t}
= \overline{\mathcal L}^{(\varepsilon)} [ \overline{\beta}_1] + 
\overline{\mathcal Q}_{\infty}^{(\varepsilon)} [\overline{\beta}_1],
\end{equation}
with $\overline{\mathcal L}^{(\varepsilon)}$  defined {in \eqref{def:L}} and
\[
\overline{\mathcal Q}_{\infty}^{(\varepsilon)} [\overline{\beta}] = 
 \left[ \dfrac{\sqrt{k!}}{2^k} \sum_{l=1}^{k-1} \dfrac{\alpha_l \beta_{k-l} + \beta_l \alpha^{(\infty)}_{k-l}}{\sqrt{l! (k-l)!}}  + \sum_{l=1}^{\infty} (\alpha_k  \beta_l + \beta_k \alpha^{(\infty)}_l )   m_l^{(\varepsilon)}\right]_{k\in\mathbb N_1}.
\]
We show that the solutions to this system -- satisfying the {\em a priori} bounds \eqref{eq_apriori} -- decay exponentially to $0$ when $\varepsilon$ is sufficiently small.  

\medskip

In the unstable even case, we have directly by symmetry that $\beta_1 = 0.$ In the stable case, we multiply the first equation of \eqref{eq_beta_evol} by $\beta_1.$ We recover:
\[
\dfrac{1}{2} \dfrac{\textrm{d}}{{\textrm d}t} |\beta_1|^2 = { D_{\varepsilon} }|\beta_1|^2  - \left(\sum_{l=2}^{\infty}\beta_l H_l,  \beta_1  m_{\varepsilon}H_1 \right)_{G} + \sum_{l=1}^{\infty} (\beta_l \alpha_1 + \alpha_l^{(\infty)} \beta_1) \beta_1 m_{l}^{(\varepsilon)},
\]
where $D_{\varepsilon} ={(H_0,m_{\varepsilon} H_0)_G- (H_1, m_{\varepsilon} H_1)_{G} }$ 
satisfies {(recall $m^{''}(0)=1$)}
\[
|D_{\e} - \varepsilon^2 D_0 | \leq K_m \varepsilon^3, \qquad {D_0<0}.
\]
We estimate the remainder terms, using   Lemma \ref{lem_normmHk} and similarly to \eqref{inequality-amH},
\[
 \left| \left(\sum_{l=2}^{\infty}\beta_l H_l,  \beta_1  m_{\varepsilon}H_1 \right)_{G}\right| \leq K_m \varepsilon^2 |\beta_1|  \|\overline{\beta}_2\|
 \leq K_m \left( \varepsilon^3 |\beta_1|^2 + \varepsilon \|\overline{\beta}_2\|^2 \right)
\]
with {again the same convention for $\|\cdot\|$ and} $K_m$ depending only on $m.$ As for the last term, we use that $|\alpha_1| + \|\alpha_2^{(\infty)}\| \leq 2 \bar{C}_1 \varepsilon$ and {$|\overline{\alpha}_1| + \|\overline{\alpha}_2^{(\infty)}\| \leq 2 {\overline{C}_2} \varepsilon$.}}
This entails thanks to {Lemma \ref{lem_m} and} a Cauchy-Schwarz inequality that
\[
\begin{aligned}
\left|  \sum_{l=1}^{\infty} (\beta_l \alpha_1 + \alpha_l^{(\infty)} \beta_1) \beta_1 m_{l}^{(\varepsilon)} \right|
& \leq |\alpha_1| |\beta_1|   \left( \sum_{l=1}^{\infty} |\beta_l|^2 \right)^{\frac 12}  \left( \sum_{l=1}^{\infty}  |m_{l}^{(\varepsilon)}|^2 \right)^{\frac 12}  \\
& \qquad + |\beta_1|^2  \left( \sum_{l=1}^{\infty} |\alpha_l^{(\infty)}|^2 \right)^{\frac 12}   \left( \sum_{l=1}^{\infty}  |m_{l}^{(\varepsilon)}|^2 \right)^{\frac 12} \\
& \leq  {\overline{C}_2} K_m \varepsilon^3 \left(|\beta_1|^2 + \|\overline{\beta}_2\|^2 \right)
\end{aligned}
\]
We conclude that, for $\varepsilon$ sufficiently small,
\begin{equation} \label{eq_beta1_evol}
\dfrac{1}{2} \dfrac{\textrm{d}}{{\textrm d}t} |\beta_1|^2  + \dfrac{{|D_0|}}{2} \varepsilon^2 |\beta_1|^2 \leq  K_m  \varepsilon \|\overline{\beta}_2\|^2.
\end{equation}

\medskip

We obtain {a similar inequality on $\overline{\beta}_2$ in both stable and unstable even cases. 
To this end}, we multiply the equation on $\overline{\beta}_2$ by $\overline{\beta}_2.$ We obtain
\beq
\label{eq-beta1bar}
\begin{aligned}
\dfrac{1}{2}  \dfrac{\textrm{d}}{{\textrm d}t} \|\overline{\beta}_2\|^2&  =
 \sum_{k=2}^{\infty} \left( \dfrac{1}{2^{k-1}}  - 1  +(H_0,  m_{\varepsilon} H_0)_G   \right) |\beta_k|^2 
- \left(\sum_{l=1}^{\infty} \beta_l H_l,  m_{\varepsilon} \sum_{k=2}^{\infty} \beta_k H_k \right)_G  \\
& \quad  + 
\sum_{k=2}^{\infty} \left[ \dfrac{\sqrt{k!}}{2^k} \sum_{l=1}^{k-1} \dfrac{\alpha_l \beta_{k-l} + \beta_l \alpha^{(\infty)}_{k-l}}{\sqrt{l! (k-l)!}}  + \sum_{l=1}^{\infty} (\alpha_k  \beta_l + \beta_k \alpha^{(\infty)}_l )   m_l^{(\varepsilon)}\right] \beta_k.
\end{aligned}
\eeq
 
We compute the right-hand side of this system as previously.   We split the right-hand  into $L+NL$ where $L$ stands for the first line and $NL$ for the second one.  Introducing $k_0$ again, such that $(1+m_{\e}) \geq 2^{-k_0}$, we rewrite
$L = L_{(\leq)}+ L_{(\geq)} + L_{(c)}$, with 
\[
\begin{aligned}
L_{(\leq)} &= \sum_{k=2}^{{k_0+1}}   \left( \dfrac{1}{2^{k-1}}  - 1  +(H_0,  m_{\varepsilon} H_0)_G   \right) |\beta_k|^2 
- \left(\sum_{l=2}^{{k_0+1}} \beta_l H_l,  m_{\varepsilon} \sum_{k=2}^{{k_0+1}} \beta_k H_k \right)_G \\
L_{(\geq)} &=  \sum_{k={k_0+2}}^{\infty}   \left( \dfrac{1}{2^{k-1}}  - 1  +(H_0,  m_{\varepsilon} H_0)_G   \right) |\beta_k|^2 
- \left(\sum_{l={k_0+2}}^{\infty} \beta_l H_l,  m_{\varepsilon} \sum_{k={k_0+2}}^{\infty} \beta_k H_k \right)_G\\
L_{(c)} & =  - \left(\beta_1 H_1,  m_{\varepsilon} \sum_{k=2}^{\infty} \beta_k H_k \right)_G - 2 \left(\sum_{l={k_0+2}}^{\infty} \beta_l H_l,  m_{\varepsilon} \sum_{k=2}^{{k_0+1}} \beta_k H_k \right)_G.
\end{aligned}
\]
For the first and {second terms},  we infer as {in Section \ref{sec:orbital-stabilty}} that, for $\varepsilon$ sufficiently small,
\[
L_{(\leq)}  \leq  - \dfrac{1}{4} \sum_{k=2}^{{k_0+1}} |\beta_k|^2,  \qquad
L_{(\geq)}  \leq  - \dfrac{1}{2^{k_0+2}} \sum_{k={k_0+2}}^{\infty} |\beta_k|^2.
\]
As for $L_{(c)}$, we obtain, with similar computations as in the previous subsection, that
\[
| L_{(c)} |  \leq  K_m{\varepsilon^2} \left(   |\beta_1|  {\|\overline{\beta}_2\|} + \|\overline{\beta}_2 \|^2 \right) \leq  K_m \left( \varepsilon^3  |\beta_1|^2 + \varepsilon \|\overline{\beta}_2\|^2 \right).
\]
Eventually, we conclude that, for $\varepsilon$ sufficiently small,
\begin{equation} \label{eq_bbeta1_evollin}
L \leq - \dfrac{1}{2^{k_0+3}}\|\overline{\beta}_2\|^2 + K_m \varepsilon^3 |\beta_1|^2 .
\end{equation}
As for the nonlinear term, we write $NL= NL_1 + NL_2$, with
\[
NL_1 = \sum_{k=2}^{\infty} \dfrac{\sqrt{k!}}{2^k} \sum_{l=1}^{k-1} \dfrac{\alpha_l \beta_{k-l} + \beta_l \alpha^{(\infty)}_{k-l}}{\sqrt{l! (k-l)!}}  \beta_k\,,
\qquad
NL_2 = \sum_{k=2}^{\infty} \sum_{l=1}^{\infty} (\alpha_k  \beta_l + \beta_k \alpha^{(\infty)}_l )   m_l^{(\varepsilon)}  \beta_k\,.
\] 
Concerning $NL_2,$ {using the Cauchy-Schwarz inequality and Lemma \ref{lem_m}, we obtain}
\[
\begin{aligned}
|NL_2| &\leq \sum_{k=2}^{\infty}  |\alpha_k| |\beta_k| \sum_{l=1}^{\infty}  |\beta_l| | m_{l}^{(\varepsilon)}| + 
\sum_{k=2}^{\infty} |\beta_k|^2 \sum_{l=1}^{\infty} |\alpha_l^{(\infty)}| |m_{l}^{(\varepsilon)}| \\
& \leq \|\overline{\alpha}_2\| \|\overline{\beta}_2\| \|\overline{\beta}_1\|_{\ell^2(\mathbb N_1)} \|\overline{m}^{(\varepsilon)}_1\| + \|\overline{\beta}_2\|^2 \|\overline{\alpha}^{(\infty)}_1\|_{\ell^2(\mathbb N_1)} \|\overline{m}^{(\varepsilon)}_1\| \\
& \leq {\overline{C}_2} K_m \varepsilon^3  \left(  |\beta_1|^2  +   \|\overline{\beta}_2\|^2\right).
\end{aligned}
\]
As for $NL_1,$ we use again the Cauchy-Schwarz inequality and a classical binomial identity to yield
\[
\begin{aligned}
NL_1 & \leq \|\overline{\beta}_2\|\left( \left(  \sum_{k=2}^{\infty} \left[ \sum_{l=1}^{k-1} \dfrac{1}{2^k} \sqrt{\begin{pmatrix} k \\ l \end{pmatrix}} |\alpha_l ||\beta_{k-l}| \right]^2  \right)^{\frac 12}
+ \left(  \sum_{k=2}^{\infty} \left[ \sum_{l=1}^{k-1} \dfrac{1}{2^k} \sqrt{\begin{pmatrix} k \\ l \end{pmatrix}} |\beta_l ||\alpha_{k-l} ^{(\infty)}|\right]^2  \right)^{\frac 12}
\right)\\
&\leq  \|\overline{\beta}_2\| \left( \left(\sum_{k=2}^{\infty}  \f{1}{2^{k}}    \sum_{l=1}^{k-1} |\beta_{k-l}|^2|\alpha_l|^2\right)^{\f12}+\left(\sum_{k=2}^{\infty}  \f{1}{2^{k}}    \sum_{l=1}^{k-1} |\beta_{l}|^2|\alpha_{k-l}^{(\infty)}|^2\right)^{\f12}\right)\\
&\leq \f{1}{2} \|\overline{\beta}_2\|\|\overline \beta_1\|_{\ell^2(\mathbb N_1)}\left(\|\overline \alpha_1\|_{\ell^2(\mathbb N_1)}^2+\|\overline \alpha_1^{(\infty)}\|_{\ell^2(\mathbb N_1)}^2\right)^{\f12}\\
& \leq   C_2 \e
\|\overline{\beta}_2\|  (\beta_1^2+\|\overline \beta_2\|^2)^{\f12}
\\
&\leq \f{\overline C_2^2\e^2}{\eta}|\beta_1|^2+(\f\eta 4+\overline C_2 \e) \|\overline \beta_2\|^2\leq \f{\overline C_2^2\e^2}{\eta}|\beta_1|^2+\f\eta 2  \|\overline \beta_2\|^2,
\end{aligned}
\]
for arbitrary $\eta \in (0,1) $ {and $\e$ small enough}.
Eventually, we obtain that, for arbitrary fixed $\eta \in (0,1),$ we can find a constant $K_{\eta}$ such that, for $\varepsilon$ sufficiently small (depending on $\eta$),
\[
NL \leq   K_{\eta} \varepsilon^2 |\beta_1|^2 + {\eta} \|\overline{\beta}_2\|^2 .
\]
Chosing $\eta=1/2^{k_0+4},$ combining with \eqref{eq_bbeta1_evollin} and restricting the size of  $\varepsilon$ if necessary, we finally obtain a constant $\tilde{K}_m$ for which
\[
L+ NL \leq - \dfrac{1}{2^{k_0+4}} \|\overline{\beta}_2\|^2 + \tilde{K}_m \varepsilon^2 |\beta_1|^2  .
\] 
{Combining this with \eqref{eq-beta1bar} we obtain that}
\begin{equation} \label{eq_bbeta1_evol}
\dfrac{1}{2} \dfrac{\textrm{d}}{\textrm{d}t} \|\overline{\beta}_2\|^2 +  \dfrac{1}{2^{k_0+4}} \|\overline{\beta}_2\|^2 \leq  \tilde{K}_m \varepsilon^2 |\beta_1|^2  .
\end{equation}

\medskip

This completes the proof in the unstable even case since $\beta_1 \equiv 0$ and the latter inequality entails the expected exponential decay of $|\overline{\beta}_1|$. In the stable case,  we {multiply  \eqref{eq_beta1_evol}  with $4 \tilde{K}_m/{|D_0|}$ and do a summation with 
\eqref{eq_bbeta1_evol}, to obtain} 
\[
\dfrac{1}{2} \dfrac{\textrm{d}}{\textrm{d}t} \left[ \dfrac{4 \tilde{K}_m}{{|D_0|}} |\beta_1|^2  + \|\overline{\beta}_2\|^2  \right] 
 + \dfrac{1}{2^{k_0+4}} \|\overline{\beta}_2\|^2 + 2 \tilde{K_m} \varepsilon^2 |\beta_1|^2 \leq  \tilde{K}_m \varepsilon^2 |\beta_1|^2 + \dfrac{4 \tilde{K}_m}{{|D_0|}}  K_m \varepsilon \|\overline{\beta}_2\|^2. 
\]
Restricting again the size of $\varepsilon$ if necessary, we conclude that:
\[
\dfrac{1}{2} \dfrac{\textrm{d}}{\textrm{d}t} \left[ \dfrac{4 \tilde{K}_m}{{|D_0|}} |\beta_1|^2  + \|\overline{\beta}_2\|^2  \right] 
+ \dfrac{1}{2^{k_0+5}} \|\overline{\beta}_2\|^2 +  \tilde{K}_m \varepsilon^2 |\beta_1|^2 \leq  0,
\]
that implies again the expected exponential decay of  $\|\overline{\beta}_1\|_{\ell^2(\mathbb N_1)}$ with time.

\appendix
\section{The admissibility condition of the extremum point }
\label{sec:nonexistence}

In this section we prove that the second inequality in Assumption \fer{as:m-extremum} is indeed a necessary condition for the existence of a concentrated steady solution around $x_m$.
Let's consider a steady solution
$$
T_\e[q_\e]=q_\e(1+m(x)-\int m(x) q_\e(x)dx).
$$
We define
$$
\min_{x\in \R} m(x)=m_-.
$$
 We also let $\Omega_\eta$ be the following set
$$
\Omega_\eta :=\{ x\, |\, m_-\leq m(x)\leq m_-+1+\eta\},
$$
with $\eta>0$. We then claim that
$$
\int_{\Omega_\eta}q_\e(t,x)dx \geq \f{\eta}{1+\eta}.
$$
This inequality would imply that if $m(x_m)>m_-+1+\eta$, then there is no steady solution concentrated around $x_m$.

To prove this property we define 
$$
\overline m(x)=m(x)-m_- -1-\eta.
$$
Then, we have
$$
\Omega_\eta =\{ x\, |\,   -1-\eta\leq \overline  m(x)\leq 0\}
$$
We can re-write the equation on $q_\e$ as follows
$$
T[q_\e]=q_\e(1+\overline  m-\int \overline  m  q_\e dx) .
$$
From the positivity of $T(q_\e)$ and $q_\e$ we deduce that
\[
\int_\R \overline  m(y) q_\e(y) dy \leq 1+ \overline m (x).
\]
We then evaluate this at the minimum point of $ \overline m$ to obtain
$$
\int_\R \overline  m(y) q_\e(y) dy \leq -\eta  .
$$
We then use the fact that
$$
\int_{\Omega_\eta}  \overline  m(y) q_\e(y) dy \leq \int_\R\overline  m(y) q_\e(y) dy,
$$
to obtain that
$$
(-1-\eta)\int_{\Omega_\eta}  q_\e(y) dy\leq -\eta,
$$
which leads to the result.

{
\section{${T}_{\e}$ is continuous on $L^1((1+x^{2})^ldx)$.}
\label{sec:appendix}
In this appendix, we focus on the following proposition:
\begin{proposition} \label{prop_contT}
Let $\e >0$ and $l \in \mathbb N.$ The mapping $q \mapsto T_{\e}[q]$ is bilinear continuous on $L^1((1+x^{2l})dx).$
\end{proposition}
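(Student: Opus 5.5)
The plan is a direct weighted estimate on the explicit integral formula for $T_\e$. Since $T_\e[q_1,q_2]$ (the bilinear form obtained by polarization, i.e. replacing $\overline q(y)\overline q(y')$ by $q_1(y)q_2(y')$) is given by an integral against the nonnegative kernel $\Gamma_\e$, we have $|T_\e[q_1,q_2]|\le T_\e[|q_1|,|q_2|]$ pointwise. It therefore suffices to bound, for nonnegative $q_1,q_2$,
\[
\int_\R (1+x^{2l})\,T_\e[q_1,q_2](x)\,dx .
\]
Fubini–Tonelli applies since every integrand is nonnegative, and we first integrate in $x$:
\[
\int_\R (1+x^{2l})\,T_\e[q_1,q_2](x)\,dx=\int_\R\int_\R \Big(\int_\R (1+x^{2l})\Gamma_\e\big(x-\tfrac{y+y'}{2}\big)dx\Big) q_1(y)q_2(y')\,dy\,dy'.
\]

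The inner integral is handled by the substitution $x=z+\frac{y+y'}{2}$ and the elementary convexity inequality $(a+b)^{2l}\le 2^{2l-1}(a^{2l}+b^{2l})$. This gives
\[
\int_\R (1+x^{2l})\Gamma_\e\big(x-\tfrac{y+y'}{2}\big)dx\le 1+2^{2l-1}\Big(\e^{2l}\widetilde\sigma_{2l}+\big(\tfrac{y+y'}{2}\big)^{2l}\Big).
\]
Here $\e^{2l}\widetilde\sigma_{2l}$ is the $2l$-th moment of $\Gamma_\e$, which is finite. Applying convexity once more, $\big(\frac{y+y'}{2}\big)^{2l}\le \frac12(y^{2l}+y'^{2l})$. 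Hence the inner integral is bounded by
\[
C(l,\e)\,(1+y^{2l})(1+y'^{2l}),
\]
with $C(l,\e)$ a constant depending only on $l$ and $\e$.

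Inserting this bound yields
\[
\|T_\e[q_1,q_2]\|_{L^1((1+x^{2l})dx)}\le C(l,\e)\,\|q_1\|_{L^1((1+x^{2l})dx)}\|q_2\|_{L^1((1+x^{2l})dx)},
\]
which is exactly the continuity of the bilinear map. Measurability and the fact that $T_\e[q_1,q_2]$ is well defined (almost everywhere finite) follow from the same Tonelli argument. The equivalence between the weights $(1+x^2)^l$ and $1+x^{2l}$ settles the version used in the proof of Theorem \ref{thm:concentration}(0).

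No step is a genuine obstacle. The only points requiring care are the reduction to nonnegative functions through the positivity of the kernel, and the justification of the exchange of integrals, both of which Tonelli handles.
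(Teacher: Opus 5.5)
Your proof is correct and follows essentially the same route as the paper's. Both recenter at the parental midpoint $\frac{y+y'}{2}$, integrate in $x$ first via Fubini--Tonelli, and bound the result by a moment of $\Gamma_\varepsilon$ times weighted $L^1$ norms of the arguments; the only difference is that you use the convexity bound $(a+b)^{2l}\le 2^{2l-1}(a^{2l}+b^{2l})$ where the paper expands binomially, which keeps all integrands nonnegative and makes the Tonelli step slightly cleaner.
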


We provide a proof to make the paper self-contained. However, we point out that this result is a straightforward continuation of \cite[Lemma A.2]{JG.MH.SM:23}.

\medskip

\begin{proof}
We give a proof in case $\e =1.$ Since bilinearity is obvious, we focus on integrability properties. 
Let $l \in \mathbb N$ and $q \in L^1((1+x^{2})¨^ldx).$ Given $\ell \in \{0,\ldots,l\}$, we have for all $x \in \R$
\[
\begin{aligned}
& |x^{2\ell}T_1[q](x)|  = \int_{\mathbb R} \int_{\mathbb R} \left(x - \dfrac{y_1+y_2}{2}  +\dfrac{y_1+y_2}{2} \right)^{2\ell} \Gamma _1\left(x - \dfrac{y_1+y_2}{2} \right) |q(y_1)| |q(y_2)| dy_1dy_2 \\
&  = \sum_{k=0}^{2\ell} \sum_{i=0}^{k} \dfrac{\begin{pmatrix}2\ell \\ k \end{pmatrix} \begin{pmatrix} k \\ i \end{pmatrix}}{2^{{k}}} \int_{\mathbb R} \int_{\mathbb R} \left(x - \dfrac{y_1+y_2}{2} \right)^{2\ell-k} \Gamma_1 \left( x - \dfrac{y_1+y_2}{2} \right)  
y_1^{i} |q(y_1)| y_2^{k-i} |q(y_2)|  dy_1y_2 \\
\end{aligned}
\]
Here we note that for all instances we have $i \leq 2l$ and $k-i \leq 2l$ and that when $j \leq 2l$
we have $y^{j}q(y) \in L^1(\mathbb R)$ with :
\[
\int_{\mathbb R} |y^{j}q(y)| dy \leq C(j,\ell) \int_{\mathbb R} (1+y^{2l}) |q(y)| dy.
\] 
We can then apply Fubini theorem to yield that $x^{2\ell} T_1[q](x) \in L^1(\mathbb R)$ with:
\[
\int_{\mathbb R} x^{2\ell} |T_1[q](x)| dx \leq C(\ell,l) \left( \int_{\mathbb R} (1+y^{2l}) |q(y)| dy\right)^2.
\]
This ends the proof.
\end{proof}

}
\section{$\tilde{T}_1$ is continuous.}
\label{sec:appendixT}
{ Let $h_1,h_2\, \in L^2(\mathbb R,G(x){\rm d}x),$ with  
\[
h_1 = \sum_{k=0}^{\infty} \alpha_k H_k,\quad h_2 = \sum_{k=0}^{\infty} \beta_k H_k,
\]
such that $(\alpha_k)_{k \in \mathbb N} \in \ell^2(\mathbb N)$ and  $(\beta_k)_{k \in \mathbb N} \in \ell^2(\mathbb N)$. We define
\beq
\label{def:T-tilde}
\tilde{T}_1[h_1,h_2] = \sum_{k=0}^{\infty} \gamma_k H_k
\quad
\text{where}
\quad
\gamma_k = \dfrac{\sqrt{k!}}{2^{k}} \sum_{l=0}^{k} \dfrac{\alpha_l \beta_{k-l}}{\sqrt{l!(k-l)!}}\qquad
\forall \, k \in \mathbb N.
\eeq 
\begin{lemma}
\label{lem:T-cont}
The operator $\tilde{T}_1$ defines a bounded bilinear mapping $L^2(\mathbb R,G(x)dx)^2 \to L^2(\mathbb R,G(x)dx)$. Moreover, we have
\beq
\label{eq:T-tildeT}
\tilde{T}_1[h_1,h_2] = G^{-1} T_1[h_1G,h_2G] \qquad \forall \, (h_1,h_2) \in (L^2(\mathbb R,Gdx))^2.
\eeq
\end{lemma}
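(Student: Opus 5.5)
The plan is to prove the bound on the Hermite side first, and only afterwards identify the resulting function with $G^{-1}T_1[h_1G,h_2G]$.

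\emph{Step 1: the $\ell^2$ estimate.} Given coefficient sequences $(\alpha_k),(\beta_k)\in\ell^2(\mathbb N)$, I want to show that the sequence $(\gamma_k)$ defined in \eqref{def:T-tilde} lies in $\ell^2$. Applying Cauchy--Schwarz in $l$,
\[
\gamma_k^2=\frac{1}{4^k}\Big(\sum_{l=0}^k\sqrt{\tbinom{k}{l}}\,\alpha_l\beta_{k-l}\Big)^2\le \frac{1}{4^k}\Big(\sum_{l=0}^k\tbinom{k}{l}\Big)\sum_{l=0}^k\alpha_l^2\beta_{k-l}^2=\frac{1}{2^k}\sum_{l=0}^k\alpha_l^2\beta_{k-l}^2 .
\]
Summing over $k$ and bounding $2^{-k}\le1$ gives
\[
\sum_k\gamma_k^2\le\sum_k\sum_{l}\alpha_l^2\beta_{k-l}^2=\|\alpha\|_{\ell^2}^2\|\beta\|_{\ell^2}^2 .
\]
Hence $\|\tilde T_1[h_1,h_2]\|_{L^2(G)}\le\|h_1\|_{L^2(G)}\|h_2\|_{L^2(G)}$. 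The map is bilinear by construction, so it is a bounded bilinear map. This is the same binomial trick already used for $S_1$ and $NL_1$ earlier in the paper.

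\emph{Step 2: the identity \eqref{eq:T-tildeT} on polynomials.} By \eqref{tildeT1-Hermite} and bilinearity, the two sides of \eqref{eq:T-tildeT} agree whenever $h_1,h_2$ are finite Hermite sums. The formula \eqref{tildeT1-Hermite} itself comes from $T_1[\partial^kG,\partial^lG]=2^{-(k+l)}\partial^{k+l}G$. To check that, I compute Fourier transforms: $T_1[f,g]$ is $\Gamma_1$ convolved with the pushforward of $f\otimes g$ under $(y,y')\mapsto(y+y')/2$. Its transform is therefore $\hat\Gamma_1(\xi)\hat f(\xi/2)\hat g(\xi/2)$. Substituting $\hat G(\xi)=e^{-\xi^2/2}$ and $\hat\Gamma_1(\xi)=e^{-\xi^2/4}$ gives $(i\xi/2)^{k+l}e^{-\xi^2/2}$.

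\emph{Step 3: density.} I expect this to be the main obstacle: showing that the map $(h_1,h_2)\mapsto G^{-1}T_1[h_1G,h_2G]$ is continuous from $L^2(G)^2$ into some topology where limits can be identified. I will use the pointwise bound
\[
|T_1[f_1,f_2](x)|\le\|\Gamma_1\|_\infty\|f_1\|_{L^1}\|f_2\|_{L^1}
\]
together with $\|hG\|_{L^1}\le\|h\|_{L^2(G)}$ (Cauchy--Schwarz). These show that $h\mapsto T_1[hG,hG]$ is continuous into $L^\infty(\mathbb R)$, hence into $L^1_{loc}$. Now take Hermite truncations $h_i^{(K)}\to h_i$ in $L^2(G)$. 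By Step 1, $\tilde T_1[h^{(K)}_1,h^{(K)}_2]\to\tilde T_1[h_1,h_2]$ in $L^2(G)$, hence in $L^1_{loc}$. Multiplying by $G$ is harmless locally. So both $G\tilde T_1[h_1^{(K)},h_2^{(K)}]$ and $T_1[h_1^{(K)}G,h_2^{(K)}G]$ converge in $L^1_{loc}$, and they are equal for every $K$ by Step 2. The limits therefore coincide, $G\tilde T_1[h_1,h_2]=T_1[h_1G,h_2G]$ almost everywhere, which is \eqref{eq:T-tildeT}.
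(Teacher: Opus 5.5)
Your proof is correct, and it takes a different route from the paper in the identification step. Step 1 is exactly the paper's estimate: Cauchy--Schwarz against $\sum_l \binom{k}{l} = 2^k$, then the Cauchy product. Your Fourier check of $T_1[\partial^k G,\partial^l G]=2^{-(k+l)}\partial^{k+l}G$ supplies what the paper calls a straightforward computation.

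The difference is in Step 3. The paper extends \eqref{eq:T-tildeT} from Hermite pairs by continuity of \emph{both} operators in $L^2(\mathbb R,G(x)dx)$. It leaves to the reader the fact that $(h_1,h_2)\mapsto G^{-1}T_1[h_1G,h_2G]$ is continuous into that space. You never need this. The crude bound $|T_1[f_1,f_2]|\le\|\Gamma_1\|_\infty\|f_1\|_{L^1}\|f_2\|_{L^1}$, together with $\|hG\|_{L^1}\le\|h\|_{L^2(G)}$, gives continuity into $L^\infty$. You then identify the two limits in $L^1_{loc}$. The $L^2(G)$ bound for the direct integral operator then comes out as a consequence of the identity rather than being an input.

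The paper's omitted step can be supplied just as cheaply. Apply Cauchy--Schwarz with respect to the measure $\Gamma_1(x-\frac{y+y'}{2})G(y)G(y')\,dy\,dy'$, whose total mass is $T_1[G,G](x)=G(x)$. This gives $|T_1[h_1G,h_2G]|^2\le G\,T_1[h_1^2G,h_2^2G]$. Integrating yields $\|G^{-1}T_1[h_1G,h_2G]\|_{L^2(G)}\le\|h_1\|_{L^2(G)}\|h_2\|_{L^2(G)}$, a direct bound that uses no Hermite expansion. Your version is fully self-contained and uses only elementary estimates; the paper's route, once completed this way, gives the norm bound on the integral operator on its own terms.

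One wording fix: in Step 3 you mean the bilinear map $(h_1,h_2)\mapsto T_1[h_1G,h_2G]$, not the diagonal $h\mapsto T_1[hG,hG]$. Its continuity follows from boundedness via the usual splitting $T_1[a_KG,b_KG]-T_1[aG,bG]=T_1[(a_K-a)G,b_KG]+T_1[aG,(b_K-b)G]$.
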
}

 {\begin{proof}
Bilinearity is obvious. We prove that $\tilde{T}_1$ is a well-defined continuous mapping.  In particular,  we show that the right-hand side of \eqref{def:T-tilde} is converging in $L^2(\mathbb R, G{\rm d}x)$. We have
\[
\begin{aligned}
\sum_{k \in \mathbb N} |\gamma_k |^2  &  \leq \sum_{k=0}^{\infty} \dfrac{k!}{4^k}  \left( \sum_{l=0}^k \dfrac{|\alpha_l| |\beta_{k-l}| }{l! (k-l)!} \right)^{2}  \\
&   \leq \sum_{k=0}^{\infty} \dfrac{1}{4^k} \sum_{l'=0}^k \begin{pmatrix} k \\ l' \end{pmatrix} \sum_{l=0}^{k} \alpha_l^2 \beta_{k-l}^2.
\end{aligned}
\]
Up to extending the sequences $\alpha_k$ and $\beta_k$ by $\alpha_k = 0$ and $\beta_k=0$ if $k \in \mathbb Z \setminus \mathbb N,$ we obtain
\[
\begin{aligned}
\sum_{k \in \mathbb N} |\gamma_k|^2   \leq \sum_{k \in \mathbb Z} \sum_{l\in \mathbb Z} \alpha_l^2 \beta_{k-l}^2 \leq  \left( \sum_{k \in \mathbb Z} |\alpha_l|^2 \right)   \left( \sum_{k \in \mathbb Z} |\beta_l|^2 \right).
\end{aligned}
\]
Consequently, $\tilde{T}_1$ defines a bounded bilinear mapping $L^2(\mathbb R,G(x)dx)^2 \to L^2(\mathbb R,G(x)dx)$. The interested reader can also verify that the operator $(h_1,h_2)\to G^{-1}T_1[h_1G,h_2G]$ is a bilinear continuous mapping $L^2(\mathbb R,G(x)dx)^2 \to L^2(\mathbb R,G(x)dx)$. Finally notice that the equality \eqref{eq:T-tildeT} holds for $h_1=H_k$ and $h_2=H_l$, for any $(k,\ell) \in \mathbb N^2$, thanks to \eqref{tildeT1-Hermite}. The equality \fer{eq:T-tildeT} then follows from the continuity of both operators. \qed
\end{proof}}

{\section*{Acknowledgements}
This work is funded by the European Union ERC-2024-COG MUSEUM-101170884. Views and opinions expressed are however those of the authors only and do not necessarily reflect those of the European Union or the European Research Council Executive Agency (ERCEA). Neither the European Union nor the granting authority can be held responsible for them. }

\bibliographystyle{plain}

\end{document}